\def\setliststart#1{\setcounter{\@listctr}{#1}%
  \addtocounter{\@listctr}{-1}}
 \newtheorem{The}{Theorem}[section]
 \newtheorem{Cor}[The]{Corollary}
 \newtheorem{Lem}[The]{Lemma}
 \newtheorem{Pro}[The]{Proposition}
 \theoremstyle{definition}
 \newtheorem{defn}[The]{Definition}
 \newtheorem{Rem}[The]{Remark}
 \numberwithin{equation}{section}
\newcommand{\R}{\mathbb{R}}
\newcommand{\N}{\mathbb{N}}
\newcommand{\SING}{\mbox{\rm Sing}\,(u)}
\newcommand{\argmin}{\operatorname*{arg\ min}}
\newcommand{\supp}{\mbox{\rm supp}\,}
\newcommand{\singsupp}{\mbox{\rm sing\ supp}\,}
\title[Local strict singular characteristics I]{Local strict singular characteristics: \\Cauchy problem with smooth initial data}
\author{Wei Cheng \and Jiahui Hong}
\address{Department of Mathematics, Nanjing University, Nanjing 210093, China}
\email{chengwei@nju.edu.cn}
\address{Department of Mathematics, Nanjing University, Nanjing 210093, China}
\email{hjh9413@163.com}
\date{\today}
\subjclass[2010]{35F21, 49L25, 37J50}
\keywords{Hamilton-Jacobi equation of contact type, Herglotz' variational principle, conjugate point, propagation of singularities}
\begin{document}
\maketitle

%\tableofcontents

\begin{abstract}
	Main purpose of this paper is to study the local propagation of singularities of  viscosity solution to contact type evolutionary Hamilton-Jacobi equation
	\begin{align*}
		D_tu(t,x)+H(t,x,D_xu(t,x),u(t,x))=0.
	\end{align*}
	An important issue of this topic is the existence, uniqueness and regularity of the strict singular characteristic. We apply the recent existence and regularity results on the Herglotz' type variational problem to the aforementioned Hamilton-Jacobi equation with smooth initial data. We obtain some new results on the local structure of the cut set of the viscosity solution near non-conjugate singular points. Especially, we obtain an existence result of smooth strict singular characteristic from and to non-conjugate singular initial point based on the structure of the superdifferential of the solution, which is even new in the classical time-dependent case. We also get a global propagation result for the $C^1$ singular support in the contact case.
\end{abstract}

\section{Introduction}

Main purpose of this paper is twofold. First, we want to study the Bolza problem of Herglotz' type. We will emphasize on the Jacobi condition and the structure of the cut locus with respect to the associated value function. Second, we will analyze the propagation of singularities from and to non-conjugate points along smooth strict singular characteristics firstly touched in \cite{Khanin_Sobolevski2016} (see also \cite{Cannarsa_Cheng2020}). %These two aspects also have their corresponding Hamilton-Jacobi counterparts.

Throughout this paper, we assume $L=L(t,x,v,r):\R\times\R^n\times\R^n\times\R\rightarrow\R$ is a function of class $C^{R+1}$ ($R\geqslant 1$) such that the following standing assumptions are satisfied:
\begin{enumerate}[(L1)]
  \item $L(t,x,\cdot,r)$ is strictly convex for all $(t,x,r)\in \R\times\R^n\times\R$.
  \item There exist two superlinear functions $\overline{\theta}_0,\theta_0:[0,+\infty)\to[0,+\infty)$ and two $L^{\infty}_{\rm loc}$-functions $c_0,\bar{c}_0:\R\to[0,+\infty)$, such that
  \begin{align*}
  	\overline{\theta}_0(|v|)+\bar{c}_0(t)\geqslant L(t,x,v,0)\geqslant\theta_0(|v|)-c_0(t),\quad (t,x,v)\in\R\times\R^n\times\R^n\times\R.
  \end{align*}
  \item There exists an $L^{\infty}_{\rm loc}$-function $K:\R\to[0,+\infty)$ such that
  \begin{align*}
  	|L_r(t,x,v,r)|\leqslant K(t),\quad (t,x,v,r)\in\R\times\R^n\times\R^n\times\R.
  \end{align*}
  \item There exists two $L^{\infty}_{\rm loc}$-functions $C_1,C_2:\R\to[0,+\infty)$ such that
  \begin{align*}
  	|L_t(t,x,v,r)|\leqslant C_1(t)+C_2(t)L(t,x,v,r),\quad (t,x,v,r)\in\R\times\R^n\times\R^n\times\R.
  \end{align*}
\end{enumerate}

Given any $x,y\in\R^n$, $a<b$ and $u\in\R$, we denote by $\Gamma^{a,b}_{x,y}$ the set of absolutely continuous functions $\xi\in W^{1,1}([a,b],\R^n)$ such that $\xi(a)=x$ and $\xi(b)=y$. The variational problem of Herglotz' type is to solve
\begin{align*}
	\inf_{\xi\in\Gamma^{a,b}_{x,y}}\int^{b}_{a}L(s,\xi(s),\dot{\xi}(s),u_{\xi}(s))\ ds,
\end{align*}
where $u_{\xi}$ is uniquely determined by the \emph{Carath\'eodory equation}
\begin{align*}
	\begin{cases}
		\dot{u}_{\xi}(s)=L(s,\xi(s),\dot{\xi}(s),u_{\xi}(s)),\quad s\in[a,b],&\\
		u_{\xi}(t_1)=u.&
	\end{cases}
\end{align*}
The existence and regularity issues of this problem were solved in \cite{CCWY2019,CCJWY2020} in a rigorous way recently.

The first part of this paper is composed of a collection of results on the Bolza problem of Herglotz' type, and relevant results on the viscosity solutions of the associated Hamilton-Jacobi equation
\begin{equation}\label{eq:HJe}\tag{HJ$_e$}
	\left\{
	 \begin{split}
	 	D_tu(t,x)+H(t,x,D_xu(t,x),u(t,x))=&\,0\\
	 	u(0,x)=&\,u_0(x)
	 \end{split}
	 \right.\quad x\in \R^n, t>0,
\end{equation}
with $u_0$ of class $C^{R+1}$. Recall that the Hamiltonian $H:\R\times\R^n\times\R^n\times\R\rightarrow\R$ is defined by
\begin{align*}
	H(t,x,p,r)=\sup_{v\in\R^n}\{\langle p,v\rangle-L(t,x,v,r)\},\qquad (t,x,p,r)\in\R\times\R^n\times\R^n\times\R.
\end{align*}
For other variational approach of equation \eqref{eq:HJe}, see also \cite{Wang_Yan2019,Wang_Wang_Yan2017} and references therein.

We give the definition of \emph{conjugate points} and \emph{irregular points} for this Herglotz-type problem and clarify the structure of the cut locus $\bar{\Sigma}$. A very important observation (Proposition \ref{irregu non-conj pro}) is that, if $(\bar{t},\bar{x})$ is not conjugate then $u$ has a local representation as the minimum of a finite family of smooth function, i.e., there exists $r_0>0$ such that
\begin{equation}\label{eq:intro_finite_repre}
	u(t,x)=\min_{i=1,\ldots,k}v_i(t,x),\quad (t,x)\in B_{r_0}((\bar{t},\bar{x}))
\end{equation}
where all $v_i's$ are of class $C^{R+1}$. This is a standing point of our sequel analysis of the propagation of singularities around a non-conjugate point.

It is well known that Hamilton-Jacobi equations have no global smooth solutions in general, because solutions may develop singularities due to crossing or focusing of characteristics. The persistence of singularities, i.e, once a singularity is created, it will propagate forward in time up to $+\infty$. The expected maximal regularity for solutions of \eqref{eq:HJe} is the local semiconcavity of $u$. See, for instance, \cite{Cannarsa_Sinestrari_book} and \cite{Villani_book2009} for more details for the notion of semiconcavity.

In the seminal paper \cite{Albano_Cannarsa2002}, Albano and Cannarsa introduced the important notion of \emph{generalized characteristics} for Hamilton-Jacobi equation \eqref{eq:HJe}, which is a keystone for the study of the problem of propagation of singularities later. In one-dimensional case, the idea of generalized characteristics  also comes from earlier work by Dafermos \cite{Dafermos1977} on Burgers equation. Recall that a Lipschitz curve $\mathbf{x}:[0,T]\to\Omega$, $\mathbf{x}(0)=x_0\in\Sigma$, the set of non-differentiability of $u$, is called a generalized characteristic from $x_0$ with respect to the Hamilton-Jacobi equation
\begin{equation}
	H(x,Du(x),u(x))=0,\qquad x\in\Omega,
\end{equation}
if the following differential inclusion is satisfied
\begin{equation}\label{eq:intro_gc}
	\dot{\mathbf{x}}(t)\in\text{co}\,H_p(\mathbf{x}(t),D^+u(\mathbf{x}(t)),u(\mathbf{x}(t))),\quad a.e.,\ t\in[0,T].
\end{equation}
Local structure of generalized characteristics was systematically studied in \cite{Cannarsa_Yu2009}. 
%The authors proved that, under rather general conditions, if $x_0\in\Sigma$ is not a \emph{critical point with respect to $(H,u)$}, then there exists a Lipschitz solution $\mathbf{x}:[0,\tau]\to\Omega$ of \eqref{eq:intro_gc} such that
%\begin{enumerate}[\rm (1)]
%	\item $\mathbf{x}$ is a generalized characteristic with $\mathbf{x}(0)=x$,
%	\item $\mathbf{x}(t)\in\Sigma$ for all $t\in[0,\tau]$,
%	\item $\dot{\mathbf{x}}^+(0)=H_p(x_0,p_0)$ where $p_0=\arg\min\{H(x_0,p): p\in D^+u(x)\}$,
%	\item $\lim_{t\to0^+}\operatorname*{ess\ sup}_{s\in[0,t]}|\dot{\mathbf{x}}(s)-\dot{\mathbf{x}}^+(0)|=0$.
%\end{enumerate}
%We call a generalized characteristic satisfies condition (1)-(4) above a \emph{singular characteristic}.

However, the convex hull in \eqref{eq:intro_gc} is an obvious obstacle to establish the well-posedness of the differential inclusion \eqref{eq:intro_gc} such as uniqueness and stability. Khanin and  Sobolevski's celebrating results (\cite{Khanin_Sobolevski2016}) established the existence of the singular characteristics satisfies \eqref{eq:intro_gc} without convex hull under some extra conditions on the initial data for classical time-dependent Hamiltonians (see also \cite{Stromberg_Ahmadzadeh2014} for some relevant discussion). Given a locally semiconcave solution $u$ of the equation
\begin{align*}
	\begin{cases}
		D_tu(t,x)+H(t,x,D_xu(t,x))=0\\
	 	u(0,x)=u_0(x),
	\end{cases}
	\quad x\in \R^n, t>0,
\end{align*}
a Lipschitz singular curve $\mathbf{x}:[t_0,t_0+\delta]\to(0,+\infty)\times\R^n$ is called a \emph{strict singular characteristic} from $(t_0,x_0)\in\SING$ if there exists a right continuous selection $p(t)\in D^+u(t,\mathbf{x}(t))$ such that
\begin{equation}\label{eq:intro_sgc}
	\begin{split}
		\begin{cases}
		\dot{\mathbf{x}}^+(t)=H_p(t,\mathbf{x}(t),p(t))& \forall\ t\in [t_0,t_0+\delta],\\
		\mathbf{x}(t_0)=x_0,&
	\end{cases}
	\end{split}
\end{equation}
where the curve $p(\cdot)$ satisfies the following energy condition
\begin{equation}\label{eq:KS_energy}
	H(t,\mathbf{x}(t),p(t))=\min_{p\in D^+u(t,\mathbf{x}(t))}H(t,\mathbf{x}(t),p)\qquad\forall t\in[t_0,t_0+\delta].
\end{equation}

In a recent paper (\cite{Cannarsa_Cheng2020}), the authors showed that if the initial point is not a critical point with respect to $(H,u)$, then all the singular characteristics is unique up to a bi-Lipschitz homeomorphism and the strict singular characteristic has uniqueness, for 2D stationary equation. However, the strict singular characteristics are not well understood since lack of more information from the underlying characteristic systems, comparing to the intrinsic approach of global propagations in \cite{Cannarsa_Cheng3}, \cite{Cannarsa_Cheng_Fathi2017} and \cite{Cannarsa_Cheng_Fathi2019}.

In this paper, we solve this problem for Hamilton-Jacobi equation \eqref{eq:HJe} around a non-conjugate singular point. Under certain non-degenerate condition we proved that the strict singular characteristic is locally a \emph{smooth} curve with uniqueness for the problem in arbitrary dimension. Moreover, we characterize such strict singular characteristics in a way with high geometric intuition (see Theorem \ref{thm:local propagation positive}). We also obtained a smooth strict singular characteristic approach a \emph{minimax} non-conjugate singular point (see Theorem \ref{thm:local propagation negative}). These results also lead to a clear picture of the singular set $\Sigma$ near a non-conjugate singular point at least when $n=1$.

We also give a global result on the propagation of the $C^1$ singular support of viscosity solutions of contact type Hamilton-Jacobi equation \eqref{eq:HJe} (Theorem \ref{global propagation}). This result is known for classic time-dependent one (see \cite{Albano2014_1}).

The paper is organized as follows: In Section 2 and 3, we introduce the Bolza problem of Herglotz type and obtain a collection of results on the Jacobi conditions and the structure of the cut set of the value functions which are viscosity solutions of \eqref{eq:HJe}. In Section 4, we analyze the strict singular characteristics around the non-conjugate singular points. In Section 5, we prove the global propagation of the $C^1$-singular support. The last section is a collection of the proofs of all the statements in Section 2.

\medskip

\noindent\textbf{Acknowledgements.} %Piermarco Cannarsa was supported in part by the National Group for Mathematical Analysis, Probability and Applications (GNAMPA) of the Italian Istituto Nazionale di Alta Matematica ``Francesco Severi'' and by Excellence Department Project awarded to the Department of Mathematics, University of Rome Tor Vergata, CUP E83C18000100006.
Wei Cheng is partly supported by National Natural Science Foundation of China (Grant No. 11871267, 11790272 and 11631006). %The authors also appreciate the cloud meeting software Zoom for the help to finish this paper in this difficult time of coronavirus.

\section{Bolza problem of Herglotz' type}\label{section_3_1}

In this section, we are devoted to study the following Bolza problem of Herglotz type: for any $(t,x)\in(0,+\infty)\times\R^n$,
\begin{equation}\label{eq:cov}\tag{COV$_{t,x}$}
	\inf_{\xi\in\mathcal{A}_{t,x}}\{u_0(\xi(0))+\int^{t}_{0}L(s,\xi(s),\dot{\xi}(s),u_{\xi}(s))\ ds\}
\end{equation}
where $\mathcal{A}_{t,x}$ is the set of absolutely continuous curve $\xi:[0,t]\to\R^n$ such that $\xi(t)=x$, and $u_{\xi}:[0,t]\to\R$ is a curve uniquely determined by the \emph{Carath\'eodory equation}
\begin{equation}\label{eq:cara1}
	\begin{cases}
		\dot{u}_{\xi}(s)=L(s,\xi(s),\dot{\xi}(s),u_{\xi}(s)),\quad s\in[0,t],&\\
		u_{\xi}(0)=u_0(\xi(0)),&
	\end{cases}
\end{equation}
with $u_0$ of class $C^{R+1}$ and bounded below by a function $(\kappa_1,\kappa_2)$-Lipschitz in the large\footnote{Let $(x,d)$ be a metric space. A function $\phi:X\to\R$ is called $(\kappa_1,\kappa_2)$-Lipschitz in the large if there exists $\kappa_1,\kappa_2\geqslant 0$ such that $|\phi(y)-\phi(x)|\leqslant\kappa_1+\kappa_2d(x,y)$ for all $x,y\in X$.}.

For any $t_2>t_1$, $x,y\in\R^n$ and $u\in\R$, we define
\begin{align*}
	h_L(t_1,t_2,x,y,u):=\inf_{\xi\in\Gamma^{a,b}_{x,y}}\int^{t_2}_{t_1}L(s,\xi(s),\dot{\xi}(s),u_{\xi}(s))\ ds,
\end{align*}
where $u_{\xi}$ is uniquely determined by the \emph{Carath\'eodory equation}
\begin{equation}\label{eq:cara2}
	\begin{cases}
		\dot{u}_{\xi}(s)=L(s,\xi(s),\dot{\xi}(s),u_{\xi}(s)),\quad s\in[t_1,t_2],&\\
		u_{\xi}(t_1)=u.&
	\end{cases}
\end{equation}
We call the function $h_L(t_1,t_2,x,y,u)$ the \emph{(negative type) fundamental solution} for the Hamilton-Jacobi equation $D_tu(t,x)+H(t,x,D_xu(t,x),u(t,x))=0$.

We denote by $u(t,x)$, $(t,x)\in(0,+\infty)\times\R^n$, the value function of Bolza problem \eqref{eq:cov}. As shown in \cite{CCJWY2020}, the value function can represented as
\begin{align*}
	u(t,x)=&\,\inf_{y\in\R^n}\{u_0(y)+h_L(0,t,y,x,u_0(y))\}\\
	=&\,\inf_{\xi\in\mathcal{A}_{t,x}}\left\{u_0(\xi(0))+\int^t_0L(s,\xi(s),\dot{\xi}(s),u_{\xi}(s))\ ds\right\}
\end{align*}
where $u_{\xi}$ is determined by \eqref{eq:cara1}. Similar to the classical case, $u(t,x)$ is a viscosity solution of \eqref{eq:HJe}.

Now, we list some fundamental results on \eqref{eq:cov}, especially the regularity aspects. We collect all the proofs of these propositions in Section \ref{sec:App_A}. For any $(t,x)\in(0,+\infty)\times\R^n$, set
\begin{align*}
	\mathcal{Z}_{t,x}=\{z\in\R^n: u(t,x)=u_0(z)+h_L(0,t,z,x,u_0(z))\}.
\end{align*}

\begin{Pro}\label{pro:Herglotz_Lie}
For any $(t,x)\in(0,+\infty)\times\R^n$ we have $\mathcal{Z}_{t,x}\neq\varnothing$. If $y_{t,x}\in\mathcal{Z}_{t,x}$ then there exists $\xi\in\Gamma^{0,t}_{y_{t,x},x}$ such that $\xi$ is a minimizer of \eqref{eq:cov}. Moreover, we have that
\begin{enumerate}[\rm (1)]
	\item $\xi$ is of class $C^{R+1}$ and it satisfies the Herglotz' equation
	\begin{equation}
		\begin{split}
			&\,\frac d{ds}L_v(s,\xi(s),\dot{\xi}(s),u_{\xi}(s))\\
			=&\,L_x(s,\xi(s),\dot{\xi}(s),u_{\xi}(s))+L_u(s,\xi(s),\dot{\xi}(s),u_{\xi}(s))L_v(s,\xi(s),\dot{\xi}(s),u_{\xi}(s))
		\end{split}
	\end{equation}
	on $[0,t]$ with $u_\xi$ satisfies Carath\'eodory equation \eqref{eq:cara1}.
	\item Set $p(s)=L_v(s,\xi(s),\dot{\xi}(s),u_{\xi}(s))$. Then the arc $(\xi,p,u_{\xi})$ satisfies the following Lie equation
	\begin{equation}\label{eq:Lie}
  	\begin{cases}
  		\dot{\xi}=H_p(s,\xi,p,u_{\xi}),\\
  		\dot{p}=-H_x(s,\xi,p,u_{\xi})-H_u(s,\xi,p,u_{\xi})p,\qquad s\in[0,t],\\
  		\dot{u}_{\xi}=p\cdot\dot{\xi}-H(s,\xi,p,u_{\xi}).
  	\end{cases}
  	\end{equation}
  	\item There exists $C(t,x)>0$ such that
  	\begin{align*}
  		\max_{s\in[0,t]}\{|\xi(s)|,|\dot{\xi}(s)|,|p(s)|,|u_{\xi}(s)|\}\leqslant C(t,x).
  	\end{align*}
\end{enumerate}
\end{Pro}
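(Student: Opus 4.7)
The plan is to follow the direct method of the calculus of variations, adapted to Herglotz' functional whose integrand depends on the action $u_\xi$ itself, and then to extract optimality, regularity, and a priori bounds in turn.

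\textbf{Existence.} First I would show $\mathcal{Z}_{t,x}\neq\varnothing$ and that for each $y_{t,x}\in\mathcal{Z}_{t,x}$ the infimum over $\Gamma^{0,t}_{y_{t,x},x}$ is attained. Take a minimizing sequence $\xi_n$. Condition (L3) together with Gr\"onwall applied to the Carath\'eodory equation bounds $|u_{\xi_n}(s)|$ by the initial value $u_0(\xi_n(0))$ plus the $r=0$ part of $\int_0^{s}L(\tau,\xi_n,\dot\xi_n,0)\,d\tau$, modulated by $\exp\int_0^{t}K(\tau)\,d\tau$. Combining this with (L2), a uniform bound on the total action yields a uniform $L^1$-bound on $\theta_0(|\dot\xi_n|)$; superlinearity then gives equi-integrability, whence $\dot\xi_n\rightharpoonup\dot\xi$ weakly in $L^1$ along a subsequence. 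Lower semicontinuity of the Herglotz action under this convergence is the main technical point: by (L3)--(L4) one first shows $u_{\xi_n}\to u_\xi$ uniformly on $[0,t]$ through stability of the Carath\'eodory ODE, which reduces the argument to classical Tonelli/Ioffe lower semicontinuity for the leading term $L(s,\xi,\dot\xi,\cdot)$. Coercivity of the map $y\mapsto u_0(y)+h_L(0,t,y,x,u_0(y))$, which follows from the $(\kappa_1,\kappa_2)$-Lipschitz-in-the-large lower bound on $u_0$ and the superlinearity from (L2), then gives that the infimum in $y$ is also attained.

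\textbf{Herglotz equation and regularity.} For a minimizer $\xi$, I would derive the Herglotz equation from first-order optimality. Given $\eta\in C^\infty_c((0,t),\R^n)$, consider the variation $\xi_\epsilon=\xi+\epsilon\eta$; linearising the Carath\'eodory ODE in $\epsilon$ produces a linear ODE for $\partial_\epsilon u_{\xi_\epsilon}|_{\epsilon=0}$, which is solvable in closed form by variation of constants. Imposing stationarity of $\int_0^{t}L(s,\xi_\epsilon,\dot\xi_\epsilon,u_{\xi_\epsilon})\,ds$ and integrating by parts with the integrating factor supplied by $L_r$ along $\xi$ yields the Herglotz equation in (1). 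Regularity from AC to $C^{R+1}$ is then the classical Tonelli argument: strict convexity (L1) makes $v\mapsto L_v(s,\xi(s),v,u_\xi(s))$ a $C^{R}$ diffeomorphism, so Herglotz' equation can be solved for $\ddot\xi$ in terms of $(\xi,\dot\xi,u_\xi)$ and successive differentiation using $L\in C^{R+1}$ gives $\xi\in C^{R+1}$.

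\textbf{Lie equation and bounds.} The Lie system (2) is obtained by a direct Legendre-transform computation: setting $p=L_v$ one has $H_p=\dot\xi$, $H_x=-L_x$, $H_r=-L_r$ at the corresponding point, and substitution into the Herglotz equation and the Carath\'eodory ODE produces the three equations of \eqref{eq:Lie}. For the a priori bounds in (3), $u(t,x)$ is locally bounded above by testing against straight-line competitors and below by the $(\kappa_1,\kappa_2)$-Lipschitz-in-the-large control on $u_0$ together with (L2)--(L3); this forces the action of $\xi$ to be bounded, and the Gr\"onwall--Carath\'eodory argument used in existence then gives $\|u_\xi\|_\infty\leqslant C(t,x)$ and a uniform $L^1$-bound on $\theta_0(|\dot\xi|)$. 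Continuity of $\dot\xi$ from Tonelli regularity upgrades this to an $L^\infty$-bound on $\dot\xi$ over $[0,t]$; integration yields a bound on $\xi$, and $p=L_v$ together with (L1)--(L3) delivers the bound on $p$.

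\textbf{Main obstacle.} The principal difficulty is the lower semicontinuity step in the existence argument, because the integrand depends on $u_\xi$, which is coupled nontrivially to the whole trajectory. Assumptions (L3)--(L4) are precisely what is needed to propagate $u_{\xi_n}\to u_\xi$ through the Carath\'eodory equation, and this uniform convergence is what bridges the usual Tonelli lower semicontinuity theorem to the Herglotz setting, following the treatment in \cite{CCJWY2020}.
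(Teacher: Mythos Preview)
Your outline is correct and is in substance the approach underlying the paper's proof. Note, however, that the paper does not argue from scratch here: it simply invokes Lemma~3.1 of \cite{CCJWY2020} for $\mathcal{Z}_{t,x}\neq\varnothing$ and the existence of a minimizer, and Theorem~1 of \cite{CCWY2019} for the Herglotz equation, $C^{R+1}$-regularity, and the Lie system. What you have written is effectively a sketch of the contents of those references, so the strategies agree.

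The only place where the paper gives an explicit argument is (3), and there your route diverges slightly from theirs. You pass from an $L^1$-bound on $\theta_0(|\dot\xi|)$ to an $L^\infty$-bound on $\dot\xi$ by appealing to continuity of $\dot\xi$; this gives a bound for each individual minimizer but does not by itself make the constant uniform over all $y_{t,x}\in\mathcal{Z}_{t,x}$, which is how $C(t,x)$ is used later (e.g.\ in Propositions~\ref{t small regu non-conj} and the global-propagation section). The paper instead quotes a pointwise Lipschitz estimate for fixed-endpoint Herglotz minimizers, $|\dot\xi(s)|\leqslant F\bigl(t,\tfrac{|x-y_{t,x}|}{t},|u_0(y_{t,x})|\bigr)$ with $F$ locally bounded, and then uses the coercivity bound $\tfrac{|x-y_{t,x}|}{t}\leqslant C_1(t,x)$ from \cite{CCJWY2020} to make the velocity bound depend only on $(t,x)$. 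Your coercivity step already controls $|y_{t,x}|$, so you could close the gap either by citing the same Lipschitz estimate or by inserting an Erdmann-type energy argument to turn the action bound into a uniform sup-norm bound on $\dot\xi$.
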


\begin{Pro}[dynamic programming principle]\label{pro:dyn_prog}
Let $(t,x)\in(0,+\infty)\times\R^n$ and $\xi\in\mathcal{A}_{t,x}$. Then, for all $0\leqslant t'\leqslant t$,
\begin{equation}\label{eq:dpp}
	u(t,\xi(t))\leqslant u(t',\xi(t'))+\int^{t}_{t'}L(s,\xi,\dot{\xi},u_{\xi})\ ds
\end{equation}
where $u_{\xi}$ satisfies \eqref{eq:cara2} on $[t',t]$ with $u=u(t',\xi(t'))$. The equality holds in \eqref{eq:dpp} if and only if $\xi$ is a minimizer of \eqref{eq:cov}.
\end{Pro}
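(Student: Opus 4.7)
My plan for proving Proposition 2.2 is to combine a concatenation argument for the inequality with a Gronwall-type comparison estimate for the scalar Carath\'eodory ODE. The main novelty compared with the classical (non-Herglotz) case is that the cost depends on $u_\xi$, which evolves nonlinearly along $\xi$ and is sensitive to its own initial value, so concatenations do not split additively. Assumption (L3) that $|L_r|\leqslant K(t)$ is precisely what makes this obstacle surmountable.

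For the inequality, I fix $\xi\in\mathcal{A}_{t,x}$ and $t'\in[0,t]$. For any $\epsilon>0$, the definition of $u(t',\xi(t'))$ as an infimum yields $\eta_\epsilon\in\mathcal{A}_{t',\xi(t')}$ with
$$u_0(\eta_\epsilon(0))+\int_0^{t'}L(s,\eta_\epsilon,\dot\eta_\epsilon,u_{\eta_\epsilon})\,ds\leqslant u(t',\xi(t'))+\epsilon,$$
where $u_{\eta_\epsilon}(0)=u_0(\eta_\epsilon(0))$. Integrating \eqref{eq:cara1} gives $u_{\eta_\epsilon}(t')\leqslant u(t',\xi(t'))+\epsilon$. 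I then form the concatenated curve $\tilde\xi\in\mathcal{A}_{t,x}$ equal to $\eta_\epsilon$ on $[0,t']$ and to $\xi$ on $[t',t]$, and propagate $u_{\tilde\xi}$ from $u_0(\tilde\xi(0))$ by Carath\'eodory. Then $u_{\tilde\xi}(t')=u_{\eta_\epsilon}(t')\leqslant u_\xi(t')+\epsilon$, and on $[t',t]$ both $u_{\tilde\xi}$ and the $u_\xi$ of the statement satisfy the same scalar ODE $\dot u=L(s,\xi(s),\dot\xi(s),u)$. By (L3) this ODE has $u$-Lipschitz constant $K(s)$, so Gronwall yields
$$u_{\tilde\xi}(t)\leqslant u_\xi(t)+\epsilon\exp\!\left(\int_{t'}^tK(s)\,ds\right).$$
Admissibility of $\tilde\xi$ and integration of its Carath\'eodory equation give $u(t,x)\leqslant u_{\tilde\xi}(t)$ and $u_\xi(t)=u(t',\xi(t'))+\int_{t'}^tL(s,\xi,\dot\xi,u_\xi)\,ds$; letting $\epsilon\to 0$ produces \eqref{eq:dpp}.

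For the equality characterization, I would compare two propagations along $\xi$ on $[0,t]$: the statement's $u_\xi$ (with $u_\xi(t')=u(t',\xi(t'))$) and the true cost propagation $\tilde u_\xi$ of $\xi$ itself (with $\tilde u_\xi(0)=u_0(\xi(0))$). Since $\xi|_{[0,t']}\in\mathcal{A}_{t',\xi(t')}$, integrating Carath\'eodory gives $\tilde u_\xi(t')\geqslant u(t',\xi(t'))=u_\xi(t')$, and scalar ODE noncrossing on $[t',t]$ preserves this ordering, so $\tilde u_\xi(t)\geqslant u_\xi(t)$. If $\xi$ is a minimizer, then $\xi|_{[0,t']}$ must itself be a minimizer for $(t',\xi(t'))$ -- otherwise the concatenation of a strictly better $\eta$ with $\xi|_{[t',t]}$, together with the Gronwall estimate above, would yield an admissible curve in $\mathcal{A}_{t,x}$ of strictly smaller cost, contradicting minimality. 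Consequently $\tilde u_\xi(t')=u_\xi(t')$, $\tilde u_\xi$ coincides with $u_\xi$ on $[t',t]$, and $u(t,x)=\tilde u_\xi(t)=u_\xi(t)$ is exactly equality in \eqref{eq:dpp}. Conversely, equality combined with the chain $\tilde u_\xi(t)\geqslant u_\xi(t)=u(t,x)$ and $\tilde u_\xi(t)\geqslant u(t,x)$ pins $\tilde u_\xi(t)$ to $u(t,x)$, so $\xi$ realizes the infimum.

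The principal technical point is the scalar-ODE comparison (noncrossing plus quantitative Gronwall control of the gap), and this reduces cleanly to the boundedness $|L_r|\leqslant K(t)$ from (L3). Once this is in place, the proof follows the familiar Bellman template, with the Carath\'eodory evolution replacing the usual additive accumulation of the running cost.
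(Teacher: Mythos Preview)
Your argument for the inequality \eqref{eq:dpp} is sound: the $\epsilon$-approximate minimizer, concatenation, and the scalar comparison/Gronwall estimate driven by (L3) are exactly the right tools, and your observation that the Herglotz cost does not split additively but is controlled via monotone dependence of the Carath\'eodory flow on its initial datum is the key point. The forward implication of the equality clause (minimizer $\Rightarrow$ equality) is also fine: your strict-noncrossing argument shows that $\xi|_{[0,t']}$ must be optimal for $(t',\xi(t'))$, whence $\tilde u_\xi(t')=u_\xi(t')$ and the two propagations coincide on $[t',t]$.

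The gap is in the converse. You write that ``$\tilde u_\xi(t)\geqslant u_\xi(t)=u(t,x)$ and $\tilde u_\xi(t)\geqslant u(t,x)$ pins $\tilde u_\xi(t)$ to $u(t,x)$'', but both displayed inequalities point the same way and give no upper bound on $\tilde u_\xi(t)$. In fact, for a fixed $t'\in(0,t)$ the converse is false as stated: take any minimizer $\xi^*$ for $(t,x)$, replace it on $[0,t']$ by a non-optimal arc with the same endpoint $\xi^*(t')$, and call the result $\xi$. Since $\xi^*|_{[0,t']}$ is optimal, $u(t',\xi(t'))=\tilde u_{\xi^*}(t')$; on $[t',t]$, $\xi=\xi^*$, so $u_\xi(t)=\tilde u_{\xi^*}(t)=u(t,x)$ and equality holds in \eqref{eq:dpp}, yet $\xi$ is not a minimizer. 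The correct reading of the proposition is that equality holds for \emph{all} $t'\in[0,t]$ (equivalently, for $t'=0$) if and only if $\xi$ is a minimizer; with $t'=0$ one has $u_\xi(0)=u(0,\xi(0))=u_0(\xi(0))$, hence $u_\xi\equiv\tilde u_\xi$, and equality reads $u(t,x)=\tilde u_\xi(t)$, which is precisely optimality of $\xi$. Once you make this explicit, the proof is complete. (The paper itself does not spell out a proof, referring instead to \cite{CCJWY2020}; your detailed argument, with this correction, is a legitimate substitute.)
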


\begin{Pro}\label{pro:D^*}
\hfill
\begin{enumerate}[\rm (1)]
	\item $u(t,x)$ is a  solution of \eqref{eq:HJe} in the sense of viscosity;
	\item $u$ is locally Lipschitz and locally semiconcave on $(0,+\infty)\times\R^n$;
	\item the following relation holds:
	\begin{align*}
		\mbox{\rm Ext}\,(D^+u(t,x))=D^*u(t,x)=\{(q,p)\in D^+u(t,x): q+H(t,x,p,u(t,x))=0\};
	\end{align*}
	%Moreover, if $(q_1,p_1),(q_2,p_2)\in D^{*}u(t,x)$ and $p_1=p_2$, then $q_1=q_2$.
	\item if $\xi$ is a minimizer of \eqref{eq:cov} then $u$ is differentiable at $(s,\xi(s))$ for all $0<s<t$.
\end{enumerate}
\end{Pro}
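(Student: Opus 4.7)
For part (1), the strategy is the standard dynamic-programming argument. To verify the subsolution inequality at $(\bar t,\bar x)$, fix $v\in\R^n$ and apply Proposition \ref{pro:dyn_prog} to the constant-velocity curve $\xi_v(s)=\bar x+(s-\bar t)v$ on $[\bar t-\varepsilon,\bar t]$; dividing by $\varepsilon$, passing to the limit and optimising over $v$ yields $q+H(\bar t,\bar x,p,u(\bar t,\bar x))\leqslant 0$ for every $(q,p)\in D^+\phi(\bar t,\bar x)$ with $\phi$ a smooth test function touching $u$ from above. The supersolution inequality is dual: pick an optimizer from Proposition \ref{pro:Herglotz_Lie} and exploit the equality case of \eqref{eq:dpp}. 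For part (2), local Lipschitz continuity follows from the representation $u(t,x)=\inf_{y}\{u_0(y)+h_L(0,t,y,x,u_0(y))\}$ together with the uniform bounds of Proposition \ref{pro:Herglotz_Lie}(3); local semiconcavity is obtained by the classical perturbation trick, comparing $u(t_0,x_0\pm h)$ with the cost along the affinely perturbed curves $\xi_\pm(s)=\xi(s)\pm(s/t_0)h$, where $\xi$ is an optimizer ending at $(t_0,x_0)$, and Gronwall-estimating the second-order variation of the running cost through the Carath\'eodory ODE.

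For part (3), the identity $\mbox{\rm Ext}\,(D^+u(t,x))=D^*u(t,x)$ is a general property of semiconcave functions, following from $D^+u=\overline{\mbox{\rm co}}\,(D^*u)$ and compactness of $D^*u$. The real content lies in the second equality. The inclusion $D^*u\subseteq\{(q,p)\in D^+u:q+H(t,x,p,u(t,x))=0\}$ is obtained by choosing differentiability points $(t_k,x_k)\to(t,x)$ with $\nabla u(t_k,x_k)\to(q,p)$: by (1) the equation holds classically at each $(t_k,x_k)$, and continuity of $H$ transfers the identity to the limit. Conversely, take $(q,p)\in D^+u$ with $q+H(t,x,p,u(t,x))=0$ and write $(q,p)=\sum_i\lambda_i(q_i,p_i)$ with $(q_i,p_i)\in D^*u$ and $\lambda_i>0$. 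Since $q_i=-H(t,x,p_i,u(t,x))$ by the first inclusion, convex combination gives
\begin{align*}
	-q=\sum_i\lambda_iH(t,x,p_i,u(t,x))\geqslant H(t,x,p,u(t,x))=-q,
\end{align*}
so equality holds in Jensen's inequality. Strict convexity of $H$ in the momentum variable (inherited from (L1) through Legendre duality) forces all $p_i$ to coincide with $p$, and hence $(q,p)\in D^*u$.

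For part (4), the goal is to prove that $D^*u(s,\xi(s))$ is a singleton; combined with (3) and semiconcavity this reduces $D^+u(s,\xi(s))$ to a single point, i.e.\ $u$ is differentiable at $(s,\xi(s))$. Given $(q,p)\in D^*u(s,\xi(s))$, choose differentiability points $(s_k,x_k)\to(s,\xi(s))$ with $\nabla u(s_k,x_k)\to(q,p)$ and minimizers $\eta_k\in\mathcal{A}_{s_k,x_k}$. Uniform bounds from Proposition \ref{pro:Herglotz_Lie}(3), together with continuous dependence of the Lie flow \eqref{eq:Lie} on its data, yield a subsequential limit $\eta$, itself a minimizer ending at $(s,\xi(s))$, with $L_v(s,\eta(s),\dot\eta(s),u_\eta(s))=p$. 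The decisive concatenation step is then: set $\tilde\xi=\eta$ on $[0,s]$ and $\tilde\xi=\xi$ on $[s,t]$; using the Carath\'eodory equation and the equality case of Proposition \ref{pro:dyn_prog} on both subintervals shows that $\tilde\xi$ attains the value $u(t,x)$, hence is a minimizer of \eqref{eq:cov}. Proposition \ref{pro:Herglotz_Lie}(1) then forces $\tilde\xi\in C^{R+1}$, so $C^1$-matching at $s$ yields $\dot\eta(s)=\dot\xi(s)$. Consequently $p=L_v(s,\xi(s),\dot\xi(s),u_\xi(s))$ is uniquely determined, and $q=-H(s,\xi(s),p,u(s,\xi(s)))$ follows from the first inclusion of (3). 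I expect the main technical obstacle to be the compactness step producing $\eta$: one must carefully use the uniform bounds and continuous dependence of the Carath\'eodory flow to ensure that the limit curve is a genuine minimizer with the correct initial value $u_\eta(0)=u_0(\eta(0))$; once this is in hand the concatenation and $C^{R+1}$-regularity of $\tilde\xi$ make the uniqueness transparent.
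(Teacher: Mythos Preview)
Your approach to (1)--(3) is essentially the same as the paper's, with one caveat: the equality $\mbox{\rm Ext}\,(D^+u)=D^*u$ is \emph{not} a general property of semiconcave functions---only the inclusion $\mbox{\rm Ext}\,(D^+u)\subset D^*u$ follows from $D^+u=\overline{\mbox{\rm co}}\,D^*u$ and compactness. The paper closes the loop by proving the cyclic chain $\mbox{\rm Ext}\,(D^+u)\subset D^*u\subset\{q+H=0\}\subset\mbox{\rm Ext}\,(D^+u)$, the last inclusion via the direct computation that any non-extreme $(q,p)\in D^+u$ satisfies $q+H<0$ by strict convexity of $H$ in $p$. Your Jensen argument does establish $\{q+H=0\}\subset D^*u$, hence $D^*u=\{q+H=0\}$, but the identification with $\mbox{\rm Ext}\,(D^+u)$ still requires either the paper's computation or rerunning your Jensen argument with convex combinations drawn from $D^+u$ (using the subsolution inequality $q_i+H(\cdot,p_i,\cdot)\leqslant 0$) rather than from $D^*u$.

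For (4) the two proofs diverge genuinely. Your concatenation argument is correct and classical in spirit, but it requires the compactness/limit step you flagged plus the $C^{R+1}$ regularity of the concatenated minimizer. The paper's proof is shorter and avoids all of this: for any $(q,p)\in D^+u(s,\xi(s))$, the equality case of Proposition~\ref{pro:dyn_prog} gives $u(\tau,\xi(\tau))=u(s,\xi(s))+\int_s^\tau L\,dr$; combining the Fenchel inequality $L(s,\xi(s),\dot\xi(s),u(s,\xi(s)))\geqslant p\cdot\dot\xi(s)-H(s,\xi(s),p,u(s,\xi(s)))$ with the superdifferential bound
\[
\limsup_{\tau\to s^+}\frac{u(\tau,\xi(\tau))-u(s,\xi(s))}{\tau-s}\leqslant q+p\cdot\dot\xi(s)
\]
yields $q+H\geqslant 0$, hence $q+H=0$ by the subsolution property. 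Part (3) then forces $D^+u(s,\xi(s))=\mbox{\rm Ext}\,(D^+u(s,\xi(s)))$, and a nonempty convex set equal to its own extreme points is a singleton. This gives a self-contained one-paragraph proof using only (3); your route is longer but has the side benefit of producing the explicit formula $p=L_v(s,\xi(s),\dot\xi(s),u_\xi(s))$, which the paper derives separately in Proposition~\ref{pro:sensitive}.
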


\begin{Pro}\label{pro:sensitive}
Let $(t,x)\in(0,+\infty)\times\R^n$ and $\xi$ be a minimizer of \eqref{eq:cov}. Then
\begin{align*}
	p(t)=&\,L_v(t,\xi(t),\dot{\xi}(t),u_{\xi}(t))\in \nabla^+u(t,x),\\
	p(s)=&\,L_v(s,\xi(s),\dot{\xi}(s),u_{\xi}(s))=\nabla u(s,\xi(s)),\quad \forall s\in(0,t),\\
	p(0)=&\,L_v(0,\xi(0),\dot{\xi}(0),u_{\xi}(0))\in Du_0(\xi(0)).
\end{align*}
\end{Pro}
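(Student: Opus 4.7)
The plan is to establish all three statements via a unified sensitivity analysis along the minimizer, combining dynamic programming with the Lie equation from Proposition \ref{pro:Herglotz_Lie}. The crucial ingredient is the linearization of Carath\'eodory equation \eqref{eq:cara2}: given the minimizer $\xi$ on $[a,b]$ with adjoint $p$ and value $u_\xi$, I consider a one-parameter family of admissible competitors $\eta_\epsilon$ with $\eta_\epsilon(a)=\xi(a)+\epsilon h_0$, $\eta_\epsilon(b)=\xi(b)+\epsilon h_1$ and initial value $v_{\eta_\epsilon}(a)=u_\xi(a)+\epsilon w_0+o(\epsilon)$. Writing $\delta(\tau):=\partial_\epsilon\eta_\epsilon(\tau)|_{\epsilon=0}$ and $w(\tau):=\partial_\epsilon v_{\eta_\epsilon}(\tau)|_{\epsilon=0}$, differentiation of \eqref{eq:cara2} in $\epsilon$ gives the linearized Carath\'eodory equation $\dot w=L_x\delta+L_v\dot\delta+L_r w$ along $\xi$. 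Combining it with Herglotz' equation $\dot p=L_x+L_r p$ and the integrating factor $E(\tau):=\exp\bigl(\int_a^\tau L_r(\sigma,\xi,\dot\xi,u_\xi)\,d\sigma\bigr)$, a short calculation verifies the identity $(L_x\delta+L_v\dot\delta)/E=(p\delta/E)'$, which integrates to the master formula
\begin{equation}\label{eq:sketch_sens}
\frac{w(b)}{E(b)}-w_0=\frac{p(b)\cdot h_1}{E(b)}-p(a)\cdot h_0.
\end{equation}

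For the interior statement at $s\in(0,t)$, Proposition \ref{pro:D^*}(4) guarantees that $u$ is differentiable at $(s,\xi(s))$. I select the family $\eta_\epsilon(\tau)=\xi(\tau)+\epsilon\phi(\tau)h$ on $[s,t]$ with a smooth cutoff $\phi$ satisfying $\phi(s)=1$, $\phi(t)=0$, and set $v_{\eta_\epsilon}(s):=u(s,\xi(s)+\epsilon h)$, so that $h_0=h$, $h_1=0$, $w_0=\nabla u(s,\xi(s))\cdot h$. Proposition \ref{pro:dyn_prog} gives $u(t,x)\leqslant v_{\eta_\epsilon}(t)$ with equality at $\epsilon=0$ (since $\xi$ is a minimizer), so $\epsilon=0$ is a local minimum and $w(t)=0$. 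Formula \eqref{eq:sketch_sens} then yields $w(t)=E(t)(\nabla u(s,\xi(s))-p(s))\cdot h$, and arbitrariness of $h$ forces $p(s)=\nabla u(s,\xi(s))$.

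For the endpoint at $s=0$ the argument is parallel: Proposition \ref{pro:Herglotz_Lie} tells us $\xi(0)\in\mathcal{Z}_{t,x}$ minimizes $y\mapsto u_0(y)+h_L(0,t,y,x,u_0(y))$, so taking the same family $\eta_\epsilon$ on $[0,t]$ with $\phi(0)=1$, $\phi(t)=0$ and $v_{\eta_\epsilon}(0)=u_0(\xi(0)+\epsilon h)$, the smoothness of $u_0$ gives $w_0=Du_0(\xi(0))\cdot h$ and \eqref{eq:sketch_sens} forces $Du_0(\xi(0))=p(0)$. For the endpoint at $s=t$ I appeal to a limit argument: by the interior statement and the HJ equation, the full spacetime gradient $(\partial_t u,\nabla_x u)(s,\xi(s))=(-H(s,\xi(s),p(s),u_\xi(s)),p(s))$ for every $s\in(0,t)$; letting $s\uparrow t$ and using continuity of $\xi$, $p$, $u_\xi$ from Proposition \ref{pro:Herglotz_Lie}, the limit $(-H(t,x,p(t),u(t,x)),p(t))$ lies in $D^*u(t,x)\subset D^+u(t,x)$ by Proposition \ref{pro:D^*}(3), so $p(t)\in\nabla^+u(t,x)$.

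The main obstacle is rigorously justifying the formal linearization: one must verify that $\epsilon\mapsto v_{\eta_\epsilon}$ is genuinely $C^1$ at $\epsilon=0$ with the remainder uniformly $o(\epsilon)$ on $[a,b]$, so that $w$ is well defined and \eqref{eq:sketch_sens} is an honest first-order expansion rather than a heuristic. This reduces to standard Gr\"onwall estimates for the nonlinear ODE \eqref{eq:cara2} along the uniformly bounded reference trajectory supplied by Proposition \ref{pro:Herglotz_Lie}(3), together with the $C^{R+1}$ regularity of $L$ and $u_0$; choosing the cutoff $\phi$ smooth with the prescribed boundary values ensures $\eta_\epsilon$ is admissible throughout.
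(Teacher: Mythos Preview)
Your proof is correct and shares with the paper the central mechanism: linearize the Carath\'eodory equation along the minimizer, use the Herglotz equation $\dot p=L_x+L_rp$ together with the integrating factor $E(\tau)=\exp\bigl(\int_a^\tau L_r\,d\sigma\bigr)$, and read off the first-order sensitivity. Your packaging of this into the single identity \eqref{eq:sketch_sens} is clean and matches the explicit integrations the paper performs.

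The organization, however, is genuinely different. The paper first proves the terminal relation $p(t)\in\nabla^+u(t,x)$ directly, by the variation $\xi_v(s)=\xi(s)+\tfrac{s}{t}v$ (fixed initial point, moving terminal point) and the competitor inequality $u(t,x+v)\leqslant\tilde u_v(t)$; the interior statement is then an immediate corollary, since $\xi|_{[0,s]}$ is itself a minimizer at $(s,\xi(s))$ and $u$ is differentiable there by Proposition~\ref{pro:D^*}(4). You instead prove the interior statement first by a variation on $[s,t]$ with moving initial point, and recover the terminal relation by a limit argument through $D^*u(t,x)$, using that $(s,\xi(s))$ are differentiability points and that reachable gradients lie in $D^*u\subset D^+u$. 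For $p(0)=Du_0(\xi(0))$ both proofs use essentially the same variation with moving initial point on $[0,t]$.

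Each route has its merits: the paper's direct terminal variation gives $p(t)\in\nabla^+u(t,x)$ without appealing to the structure of $D^*u$, while your limit argument avoids a separate terminal computation and exploits Proposition~\ref{pro:D^*}(4) more fully. The one point worth stating a bit more explicitly in your write-up is that the initial datum $v_{\eta_\epsilon}(s)=u(s,\xi(s)+\epsilon h)$ is only differentiable (not $C^1$) at $\epsilon=0$; this is still enough, because the flow map of the Carath\'eodory ODE is $C^1$ in the initial value and in $\epsilon$, so the composition is differentiable at $\epsilon=0$ and the two-sided minimum forces $w(t)=0$.
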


\begin{Pro}\label{pro:contact_minimizer}
If the triple $(X,P,U)$ of curves satisfies Lie equation \eqref{eq:Lie} on $[0,t]$, $U(0)=u_0(X(0))$ and $U(s)=u(s,X(s))$ for $s\in(0,t]$, then $X$ is a minimizer of \eqref{eq:cov} with $x=X(t)$.
\end{Pro}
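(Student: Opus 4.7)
The plan is to observe that the triple $(X,P,U)$ being a solution of the Lie equation forces $U$ to coincide with the Carathéodory solution $u_X$ along $X$, after which the identification $U(t)=u(t,X(t))$ instantly converts the tautological inequality ``$u$ is an infimum'' into an equality.

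First I would start from the third component of the Lie equation, namely
\begin{equation*}
  \dot U(s)=P(s)\cdot\dot X(s)-H(s,X(s),P(s),U(s)),
\end{equation*}
and combine it with the first component $\dot X=H_p(s,X,P,U)$. Using the Legendre duality between $L$ and $H$, together with $L_v(s,x,v,r)=p$ iff $v=H_p(s,x,p,r)$ (which follows from strict convexity (L1)), one gets the pointwise identity
\begin{equation*}
  P(s)\cdot H_p(s,X(s),P(s),U(s))-H(s,X(s),P(s),U(s))=L(s,X(s),\dot X(s),U(s)).
\end{equation*}
Substituting this in, $U$ satisfies $\dot U(s)=L(s,X(s),\dot X(s),U(s))$ on $[0,t]$ together with $U(0)=u_0(X(0))$. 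By uniqueness of the solution of the Carathéodory equation \eqref{eq:cara1} along the curve $X$, we conclude $U\equiv u_X$.

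Next I would integrate this ODE to obtain
\begin{equation*}
  U(t)=u_0(X(0))+\int_0^t L(s,X(s),\dot X(s),u_X(s))\,ds.
\end{equation*}
Since $X\in\mathcal{A}_{t,X(t)}$, the very definition of $u$ as the infimum over $\mathcal{A}_{t,X(t)}$ of the Herglotz cost gives
\begin{equation*}
  u(t,X(t))\leqslant u_0(X(0))+\int_0^t L(s,X(s),\dot X(s),u_X(s))\,ds=U(t).
\end{equation*}
Combined with the hypothesis $U(t)=u(t,X(t))$, this inequality must be an equality, so $X$ attains the infimum in \eqref{eq:cov} with $x=X(t)$, i.e.\ $X$ is a minimizer.

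There is essentially no hard step: the only substantive point is the Legendre duality computation converting $p\cdot H_p-H$ into $L$ along the characteristic velocity, and the uniqueness for the Carathéodory ODE (which is guaranteed by assumption (L3)). Everything else is the standard ``verification'' argument for dynamic programming. I would not even need to invoke Proposition \ref{pro:dyn_prog} explicitly, although the proof can equivalently be phrased by noting that the inequality in \eqref{eq:dpp} (applied at $t'=0$) becomes an equality precisely when the candidate curve saturates it, which is exactly what the Lie equation and the assumption $U=u\circ(\mathrm{id},X)$ provide.
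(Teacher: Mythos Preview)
Your proof is correct and follows essentially the same route as the paper: derive $\dot U=L(s,X,\dot X,U)$ from the Lie equation via Legendre duality, identify $U$ with the Carath\'eodory solution $u_X$ using $U(0)=u_0(X(0))$, and then read off minimality from $U(t)=u(t,X(t))$. The paper's version is simply a terser rendition of exactly these steps.
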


\begin{Pro}\label{pro:1-1}
For any $(t,x)\in(0,+\infty)\times\R^n$, we have the following one-to-one correspondences
\begin{align*}
	D^*u(t,x)\quad\Longleftrightarrow\quad \text{minimizers of \eqref{eq:cov}}\quad\Longleftrightarrow\quad \mathcal{Z}_{t,x}.
\end{align*}
More precisely, $(q,p)\in D^*u(t,x)$ if and only if there exists a minimizer $\xi$ of \eqref{eq:cov} such that $p=L_v(t,\xi(t),\dot{\xi}(t),u_{\xi}(t))$ with $u_{\xi}$ determined by \eqref{eq:cara1}. The second one-to-one correspondence is that $\xi$ is a minimizer of \eqref{eq:cov} if and only if $\xi(0)\in\mathcal{Z}_{t,x}$.
\end{Pro}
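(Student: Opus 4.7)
The plan is to construct two natural maps between the three sets and verify each is a bijection: $\Phi_1$ sending a minimizer $\xi$ of \eqref{eq:cov} to its starting point $\xi(0)\in\mathcal{Z}_{t,x}$, and $\Phi_2$ sending $\xi$ to the pair $(-H(t,x,p(t),u(t,x)),p(t))\in D^*u(t,x)$, where $p(t)=L_v(t,\xi(t),\dot\xi(t),u_\xi(t))$. Once both are shown to be bijections, the claimed correspondence $D^*u(t,x)\leftrightarrow\mathcal{Z}_{t,x}$ will be the composite $\Phi_1\circ\Phi_2^{-1}$.

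For $\Phi_1$, Proposition \ref{pro:Herglotz_Lie} gives well-definedness and surjectivity: every $y\in\mathcal{Z}_{t,x}$ is the starting point of a minimizer. For injectivity, I would exploit that $u_0\in C^{R+1}$ makes $Du_0$ single-valued. Proposition \ref{pro:sensitive} then forces $p(0)=Du_0(y)$ for any minimizer with $\xi(0)=y$, so two such minimizers share the same initial triple $(y,Du_0(y),u_0(y))$ of the Lie system \eqref{eq:Lie}. The Cauchy--Lipschitz theorem applied to the $C^R$ vector field of \eqref{eq:Lie} then yields $\xi_1=\xi_2$.

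For $\Phi_2$, the forward direction combines Proposition \ref{pro:D^*}(4) with Proposition \ref{pro:sensitive} and the equation. For each $s\in(0,t)$, $u$ is differentiable at $(s,\xi(s))$ and, being a classical solution of \eqref{eq:HJe} at such points, satisfies $Du(s,\xi(s))=(-H(s,\xi(s),p(s),u(s,\xi(s))),p(s))$. Sending $s\to t^-$ and using the continuity of $p,u,H$ places the limit in $D^*u(t,x)$, which by Proposition \ref{pro:D^*}(3) is exactly the target set described in the statement. Injectivity of $\Phi_2$ comes from time-reversibility of \eqref{eq:Lie}: prescribing the terminal triple $(x,p(t),u(t,x))$ uniquely recovers $\xi$ backward on $[0,t]$. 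For surjectivity, given $(q,p)\in D^*u(t,x)$, I would take sequences $(t_n,x_n)\to(t,x)$ with $u$ differentiable at $(t_n,x_n)$ and $Du(t_n,x_n)\to(q,p)$, attach a minimizer $\xi_n$ of the analogous problem at $(t_n,x_n)$, apply the uniform bounds of Proposition \ref{pro:Herglotz_Lie}(3) and Arzel\`a--Ascoli to the triples $(\xi_n,p_n,u_{\xi_n})$ (possibly after prolonging along the Lie flow to a common interval), and identify a limit $(\xi,p_\infty,u_\infty)$ still satisfying \eqref{eq:Lie}. Proposition \ref{pro:dyn_prog} together with the continuity of $u$ and of the Herglotz cost would then certify $\xi$ as a minimizer of \eqref{eq:cov}.

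The main obstacle I anticipate lies in the surjectivity of $\Phi_2$. Two technical points need care: the time endpoints $t_n$ do not coincide with $t$, so the approximating curves must be rescaled or extended along the Lie flow to a common time interval before compactness arguments apply; and the limit arc must be identified with a genuine minimizer of \eqref{eq:cov}$_{t,x}$, which requires lower semicontinuity of the Herglotz action under the Carath\'eodory constraint \eqref{eq:cara1}. I would draw on the variational framework of \cite{CCJWY2020} to secure this semicontinuity, combined with the continuity of the fundamental solution $h_L$ and of $u_0$ to pass the cost and endpoint values to the limit.
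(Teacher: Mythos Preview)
Your overall plan matches the paper's: build the two maps and check bijectivity. Your treatment of $\Phi_1$, and of the well-definedness and injectivity of $\Phi_2$, coincide with the paper's almost verbatim (the paper phrases the $D^*u\leftrightarrow\text{minimizers}$ correspondence as a map in the opposite direction, so your ``well-definedness'' is its ``surjectivity'' and vice versa, but the arguments are the same). The one substantive difference is precisely the step you flag as the main obstacle: surjectivity of $\Phi_2$. The paper bypasses both of your anticipated difficulties (mismatched time intervals, lower semicontinuity of the Herglotz action) with a shorter ODE argument. Given $(q,p)\in D^*u(t,x)$, it first \emph{defines} the candidate as the solution $(X,P,U)$ of the Lie system \eqref{eq:Lie} run backward from the terminal data $(x,p,u(t,x))$; then, taking your same sequence $(t_i,x_i)$ of differentiability points with $Du(t_i,x_i)\to(q,p)$ and associated minimizers $X_i$, continuous dependence of the Lie flow on terminal data gives $(X_i,P_i,U_i)\to(X,P,U)$ in $C^{R+1}$ directly, with no rescaling or compactness extraction. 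In particular $U(0)=\lim U_i(0)=\lim u_0(X_i(0))=u_0(X(0))$, and Proposition~\ref{pro:contact_minimizer} then certifies $X$ as a minimizer outright, with no semicontinuity needed. Your Arzel\`a--Ascoli/semicontinuity route would work (the framework of \cite{CCJWY2020} supplies what you need), but the paper's route is cleaner and sidesteps exactly the technicalities you were worried about.
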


\begin{Pro}\label{pro:approx}
\hfill
\begin{enumerate}[\rm (1)]
	\item Let $(t_k,x_k),(t,x)\in(0,+\infty)\times\R^n$ ($k\in\N$) and $(t_k,x_k)\to(t,x)$ as $k\to\infty$. If $\xi_k$ is a minimizer of \eqref{eq:cov} with respect to $(t_k,x_k)$, then there exists a subsequence $\{k_i\}$ such that $\xi_{k_i}$ converges to some minimizer of \eqref{eq:cov} as $i\to\infty$ under the $C^{R+1}$-topology.
	\item Given $(t,x)\in(0,+\infty)\times\R^n$. Then, for any $\varepsilon>0$ there exists $\delta>0$ such that, if $|(t',x')-(t,x)|<\delta$, then, for any minimizer $\eta$ of \eqref{eq:cov} with respect to $(t',x')$, there exists a minimizer $\xi$ of \eqref{eq:cov} such that
	\begin{align*}
		\|\xi-\eta\|_{C^{R+1}([0,\min\{t,t'\})]}<\varepsilon.
	\end{align*}
\end{enumerate}
\end{Pro}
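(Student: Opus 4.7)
The plan is to prove (1) by extracting a convergent subsequence of initial data $(\xi_k(0),\dot{\xi}_k(0),u_{\xi_k}(0))$ and invoking the $C^R$-continuous dependence of the Lie flow \eqref{eq:Lie} on initial conditions, and then to deduce (2) from (1) by a standard compactness-contradiction argument.

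For part (1), the key preparatory step is to upgrade Proposition \ref{pro:Herglotz_Lie}(3) to a \emph{uniform} a priori bound: on a small compact neighborhood $N$ of $(t,x)$, there is a constant $C>0$ such that every minimizer $\xi$ of \eqref{eq:cov} for any $(t',x')\in N$ satisfies $\max_{s\in[0,t']}\{|\xi(s)|,|\dot{\xi}(s)|,|p(s)|,|u_{\xi}(s)|\}\leqslant C$. This follows by standard arguments: (L3) plus Gronwall applied to \eqref{eq:cara1} controls $|u_{\xi}|$ in terms of $|u_0(\xi(0))|$; continuity of $u$ together with the Bolza identity bounds $u_0(\xi(0))$ and the total action, whence (L2) yields $\int\theta_0(|\dot{\xi}|)\,ds\leqslant\text{const}$, bounding $\xi$ and $\dot{\xi}$, and then the definition $p=L_v(\cdot,\xi,\dot\xi,u_\xi)$ bounds $p$. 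With this uniform bound we extend each Lie trajectory $(\xi_k,p_k,u_{\xi_k})$ beyond $t_k$ via \eqref{eq:Lie} to a common interval $[0,T]$ with $T>\sup_k t_k$. Passing to a subsequence, $(\xi_k(0),p_k(0),u_{\xi_k}(0))$ converges to some $(z,\pi,\alpha)$; continuity of $u_0$ and the Carath\'eodory initial condition force $\alpha=u_0(z)$. Since $H\in C^{R+1}$, the right-hand side of \eqref{eq:Lie} is $C^R$, so its flow depends $C^R$-continuously on initial data, producing a limit Lie trajectory $(\xi_\infty,p_\infty,u_{\xi_\infty})$ with $(\xi_k,p_k,u_{\xi_k})\to(\xi_\infty,p_\infty,u_{\xi_\infty})$ uniformly on $[0,T]$. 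Differentiating \eqref{eq:Lie} inductively in $s$ and using the $C^{R+1}$ regularity of $H$, one upgrades this to convergence in the $C^{R+1}([0,T])$ topology.

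It remains to verify that $\xi_\infty$ is itself a minimizer for $(t,x)$, and then to deduce (2). For the former, $\xi_\infty(t)=\lim_k\xi_k(t_k)=\lim_k x_k=x$ by equicontinuity, and passing to the limit in $u(t_k,x_k)=u_0(\xi_k(0))+\int_0^{t_k}L(s,\xi_k,\dot{\xi}_k,u_{\xi_k})\,ds$ using continuity of $u$ and dominated convergence gives
\begin{align*}
	u(t,x)=u_0(\xi_\infty(0))+\int_0^t L(s,\xi_\infty,\dot{\xi}_\infty,u_{\xi_\infty})\,ds,
\end{align*}
so $\xi_\infty$ attains the infimum (one may equivalently invoke Proposition \ref{pro:contact_minimizer} directly, since the limit triple solves \eqref{eq:Lie} with $U(0)=u_0(\xi_\infty(0))$ and $U(s)=u(s,\xi_\infty(s))$ on $(0,t]$ by Proposition \ref{pro:sensitive}). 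Part (2) then follows by contradiction: if it fails, there exist $\varepsilon_0>0$, $(t_k,x_k)\to(t,x)$ and minimizers $\eta_k$ for $(t_k,x_k)$ with $\|\eta_k-\xi\|_{C^{R+1}([0,\min\{t,t_k\}])}\geqslant\varepsilon_0$ for every minimizer $\xi$ at $(t,x)$; applying (1) to the sequence $\eta_k$ extracts a $C^{R+1}$-convergent subsequence whose limit is a minimizer at $(t,x)$, contradicting the lower bound. The principal obstacle is producing genuine $C^{R+1}$ (rather than merely $C^0$ or $W^{1,1}$-weak) convergence; this is handled through the interplay between the $C^{R+1}$ smoothness of $L$ (hence of $H$) and the smooth dependence of ODE flows on initial data, together with the minor technical nuisance of matching curves whose natural domains $[0,t_k]$ vary with $k$, which we resolve by extending via the Lie flow to a common larger interval before comparing.
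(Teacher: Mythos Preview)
Your proof is correct and reaches the same conclusion as the paper, but via a different parametrization of the Lie trajectories. The paper works with \emph{terminal} data: by Proposition \ref{pro:1-1}, each minimizer $\xi_k$ corresponds to some $(q_k,p_k)\in D^*u(t_k,x_k)$, and $(\xi_k,P_k,U_k)$ is the Lie solution with terminal condition $X_k(t_k)=x_k$, $P_k(t_k)=p_k$, $U_k(t_k)=u(t_k,x_k)$. Local Lipschitzness of $u$ bounds the $(q_k,p_k)$, so a subsequence converges to $(q,p)\in D^*u(t,x)$, and continuous dependence of the Lie flow on terminal data gives $C^{R+1}$ convergence to a trajectory which is a minimizer again by Proposition \ref{pro:1-1}. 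Your approach instead uses \emph{initial} data $(\xi_k(0),p_k(0),u_{\xi_k}(0))$, extracts compactness from a uniform version of the bound in Proposition \ref{pro:Herglotz_Lie}(3), extends all trajectories to a common interval, and verifies that the limit is a minimizer by passing to the limit in the Bolza identity. The paper's route is shorter because the correspondence $D^*u\leftrightarrow\{\text{minimizers}\}$ in Proposition \ref{pro:1-1} simultaneously supplies compactness (via local Lipschitzness of $u$) and the minimizing property of the limit; your route is more hands-on and closer to a classical Tonelli argument, and has the virtue of not relying on Proposition \ref{pro:1-1}. Part (2) is handled identically in both. One minor remark: your parenthetical alternative invoking Proposition \ref{pro:sensitive} to verify $U(s)=u(s,\xi_\infty(s))$ is circular, since that proposition presupposes $\xi_\infty$ is already a minimizer; but your primary argument via dominated convergence in the Bolza identity is sound and suffices.
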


\section{Irregular and conjugate points}
\label{sec:irr_conj}

In this section, we will give the definition of the conjugate loci and analyze the structure of the cut locus for problem \eqref{eq:cov}.

\subsection{Irregular and conjugate points}

Consider the following Lie equation or characteristic system
\begin{equation}\label{eq:Lie2}
	\begin{cases}
  		\dot{X}=H_p(t,X,P,U),\\
  		\dot{P}=-H_x(t,X,P,U)-H_u(t,X,P,U)P,\qquad t\geqslant0,\\
  		\dot{U}=P\cdot\dot{X}-H(t,X,P,U),
  	\end{cases}
\end{equation}
with respect to the initial condition
\begin{align*}
	X(0;z)=z,\quad P(0;z)=Du_0(z),\quad U(0;z)=u_0(z),\quad z\in\R^n.
\end{align*}

By differentiating \eqref{eq:Lie2} with respect to $z$ we obtain that the triple $(X_z,P_z,U_z)$ satisfies the (linear) variational equation
\begin{equation}\label{eq:variational_eq}
	\begin{cases}
		\dot{X}_z=H_{px}X_z+H_{pp}P_z+H_{pu}U_z,\\
		\dot{P}_z=-H_{xx}X_z-H_{xp}P_z-H_{x,u}U_z-H_uP_z-(PH_{ux})X_z-(PH_{up})P_z-(PH_{uu})U_z,\\
		\dot{U}_z=P^TH_{px}X_z+P^TH_{pp}P_z+P^TH_{pu}U_z-H_xX_z-H_pP_z-H_uU_z,
	\end{cases}
\end{equation}
with initial condition
\begin{align*}
	X_z(0;z)=I,\quad P_z(0;z)=D^2u_0(z),\quad U_z(0;z)=\nabla u_0(z).
\end{align*}
Now, for any $z\in\R^n$ and $\theta\in\R^n\setminus\{0\}$, $X_z(0;z)\theta=\theta\not=0$. Notice $(X_z(t;z)\theta,P_z(t;z)\theta,U_z(t;z)\theta)$ satisfies an linear ODE from \eqref{eq:variational_eq}. Thus,
\begin{align*}
	(X_z(t;z)\theta,P_z(t;z)\theta,U_z(t;z)\theta)\not=0,\quad\forall t\geqslant0.
\end{align*}
To simplify equation \eqref{eq:variational_eq}, we note that
\begin{equation}\label{eq:Lie_U1}
	\begin{cases}
		\dot{U}=L(t,X,H_p(t,X,P,U),U)=L(t,X,\dot{X},U),\quad t\geqslant0,\\
		U(0,z)=u_0(z),\quad z\in\R^n.
	\end{cases}
\end{equation}
By differentiating \eqref{eq:Lie_U1} with respect to $z$ we have that
\begin{align*}
	\begin{cases}
		\dot{U}_z=L_x^TX_z+L_v^T\dot{X}_z+L_uU_z,\\
		U_z(0;z)=\nabla u_0(z).
	\end{cases}
\end{align*}
One can solve the equation above to obtain
\begin{align*}
	U_z(t;z)=e^{\int^t_0L_udr}\nabla u_0(z)+\int^t_0e^{\int^t_sL_u dr}(L_x^TX_z+L_v^T\dot{X}_z)ds.
\end{align*}
Recalling that from Herglotz equation we have that
\begin{align*}
	\frac{d}{ds}\left\{e^{\int^t_sL_u dr}L_v^TX_z\right\}=e^{\int^t_sL_u dr}(L_x^TX_z+L_v^T\dot{X}_z).
\end{align*}
Therefore,
\begin{equation}\label{eq:solve_u}
	\begin{split}
		U_z(t;z)=&\,e^{\int^t_0L_udr}\nabla u_0(z)+e^{\int^t_sL_u dr}L_v^TX_z\vert^t_0\\
	=&\,e^{\int^t_0L_udr}\nabla u_0(z)+L_v^T(t,X(t;z),\dot{X}(t;z),U(t;z))X_z(t;z)\\
	&\quad -e^{\int^t_0L_u dr}L_v^T(0,X(0;z),\dot{X}(0;z),U(0;z))X_z(0;z)\\
	=&\,e^{\int^t_0L_udr}\nabla u_0(z)+P^T(t;z)X_z(t;z)-e^{\int^t_0L_udr}\nabla u_0(z)\\
	=&\,P^T(t;z)X_z(t;z).
	\end{split}
\end{equation}
So, if $\det X_z(t;z)=0$, then there exists $\theta\in\R^n\setminus\{0\}$ such that $X_z(t;z)\theta=0$. It follows $U_z(t;z)\theta=P^T(t;z)X_z(t;z)\theta=0$. This leads to the fact that $P_z(t;z)\theta\not=0$.

Now we can introduce some notions from calculus of variation in the contact case.

\begin{defn}[irregular point and conjugate point]\hfill
\begin{enumerate}[\rm (1)]
	\item $(t,x)\in(0,+\infty)\times\R^n$ is called \emph{regular} if problem \eqref{eq:cov} admits a unique solution. Otherwise, we say $(t,x)$ is \emph{irregular}.
	\item $(t,x)$ is called \emph{a conjugate point} if there exists $z\in\R^n$ such that $X(t;z)=x$, $X$ is minimizer of \eqref{eq:cov} and $\det X_z(t;z)=0$.
	\item The set of all irregular points and conjugate points for problem \eqref{eq:cov} are denoted by $\Sigma$ and $\Gamma$ respectively.
\end{enumerate}
\end{defn}

\begin{Rem}
From Proposition \ref{pro:1-1}, the set $\Sigma$ is exactly $\SING$, the set of points at which $u$ is non-differentiable.
\end{Rem}

\subsection{Structural analysis of cut locus}

\begin{Pro}
For any $(t,x)\in(0,+\infty)\times\R^n$, let $\widetilde{\mathcal{Z}}(t,x)=\{z\in\R^n: X(t;z)=x)\}$. Then
\begin{align*}
	u(t,x)=&\,\min\{U(t;z): z\in\widetilde{\mathcal{Z}}(t,x)\},\\
	\mathcal{Z}(t,x)=&\,\{z\in\widetilde{\mathcal{Z}}(t,x): U(t;z)=u(t;z)\}.
\end{align*}
\end{Pro}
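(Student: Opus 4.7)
The plan is to identify each characteristic trajectory $X(\cdot;z)$ with $z\in\widetilde{\mathcal{Z}}(t,x)$ as an admissible competitor for \eqref{eq:cov} whose cost is exactly $U(t;z)-u_0(z)$, and then invoke Propositions \ref{pro:Herglotz_Lie} and \ref{pro:sensitive} to show that every minimizer of \eqref{eq:cov} arises from such a characteristic.

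For the easy inequality, fix $z\in\widetilde{\mathcal{Z}}(t,x)$ and set $\xi:=X(\cdot;z)\in\mathcal{A}_{t,x}$. By \eqref{eq:Lie_U1} the $U$-component satisfies $\dot U(\cdot;z)=L(s,\xi,\dot\xi,U(\cdot;z))$ with $U(0;z)=u_0(z)$; this is precisely the Carath\'eodory equation \eqref{eq:cara1} along $\xi$, so uniqueness forces $u_\xi\equiv U(\cdot;z)$ and hence
\begin{align*}
	u_0(z)+\int_0^t L(s,\xi,\dot\xi,u_\xi)\,ds=U(t;z).
\end{align*}
Since $\xi\in\mathcal{A}_{t,x}$, the definition of $u(t,x)$ as an infimum gives $u(t,x)\leqslant U(t;z)$ for every $z\in\widetilde{\mathcal{Z}}(t,x)$.

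For attainment and the matching lower bound, pick $z^*\in\mathcal{Z}_{t,x}$ (nonempty by Proposition \ref{pro:Herglotz_Lie}) and the associated minimizer $\xi^*$ of \eqref{eq:cov} with $\xi^*(0)=z^*$. Proposition \ref{pro:Herglotz_Lie}(2) says that the triple $(\xi^*,p^*,u_{\xi^*})$ with $p^*(s):=L_v(s,\xi^*,\dot\xi^*,u_{\xi^*})$ solves \eqref{eq:Lie} on $[0,t]$, and Proposition \ref{pro:sensitive} at $s=0$ yields $p^*(0)\in Du_0(\xi^*(0))$, which reduces to $p^*(0)=Du_0(z^*)$ by smoothness of $u_0$; trivially $u_{\xi^*}(0)=u_0(z^*)$. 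These initial data coincide with those of $(X(\cdot;z^*),P(\cdot;z^*),U(\cdot;z^*))$, so uniqueness for the Lie ODE gives $(\xi^*,p^*,u_{\xi^*})=(X(\cdot;z^*),P(\cdot;z^*),U(\cdot;z^*))$. In particular $X(t;z^*)=\xi^*(t)=x$, so $z^*\in\widetilde{\mathcal{Z}}(t,x)$, and
\begin{align*}
	U(t;z^*)=u_0(z^*)+\int_0^t L(s,\xi^*,\dot\xi^*,u_{\xi^*})\,ds=u(t,x).
\end{align*}
Combined with the previous step, this proves the first identity $u(t,x)=\min\{U(t;z):z\in\widetilde{\mathcal{Z}}(t,x)\}$.

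The set identity then comes for free from the two computations. The argument above shows $\mathcal{Z}_{t,x}\subset\{z\in\widetilde{\mathcal{Z}}(t,x):U(t;z)=u(t,x)\}$; conversely, if $z\in\widetilde{\mathcal{Z}}(t,x)$ satisfies $U(t;z)=u(t,x)$, then the first step shows that the admissible curve $X(\cdot;z)\in\mathcal{A}_{t,x}$ realizes the Bolza infimum, so $z\in\mathcal{Z}_{t,x}$. The only delicate point is reading the conclusion $p^*(0)\in Du_0(\xi^*(0))$ of Proposition \ref{pro:sensitive} as an equality in the $C^{R+1}$ setting; this is what tethers the ambient flow of \eqref{eq:Lie2} to the minimizers of \eqref{eq:cov}, and everything else is bookkeeping on top of uniqueness for the Lie and Carath\'eodory ODEs.
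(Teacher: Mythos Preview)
Your proof is correct and follows essentially the same route as the paper's: both arguments identify minimizers of \eqref{eq:cov} with trajectories of the characteristic flow via Propositions \ref{pro:Herglotz_Lie} and \ref{pro:sensitive} (using that $u_0\in C^{R+1}$ so $p(0)\in Du_0(\xi(0))$ becomes an equality), and both obtain the reverse inequality by observing that $X(\cdot;z)$ is an admissible competitor with cost $U(t;z)$. Your write-up is somewhat more explicit about invoking uniqueness for the Lie and Carath\'eodory ODEs, but the structure is the same.
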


\begin{proof}
Proposition \ref{pro:Herglotz_Lie} and \ref{pro:sensitive} shows any minimizer $\xi:[0,t]\to\R^{n}$ for $u(t,x)$ is a solution of \eqref{eq:Lie} and there hold $u_{\xi}(0)=u_{0}(\xi(0))$, $p(0)=Du_{0}(\xi(0))$. Thus, we know that $\xi(\cdot)=X(\cdot,\xi(0))$ and $u_{\xi}(\cdot)=U(\cdot,\xi(0))$, which imply $\xi(0)\in\widetilde{\mathcal{Z}}(t,x)$ and $u(t,x)=u_{\xi}(t)=U(t,\xi(0))$. On the other hand, for any $z\in\widetilde{\mathcal{Z}}(t,x)$, we have
\begin{align*}
U(t,z)=u_{0}(z)+\int_{0}^{t}L(s,X(s,z),\dot{X}(s,z),U(s,z))\geqslant u(t,x).
\end{align*}
This leads to our conclusion.
\end{proof}

Now, let $(t,x)\in(0,+\infty)\times\R^n$ and $\xi$ is a minimizer of \eqref{eq:cov}. Given a Lipschitz curve $\alpha:[0,t]\to\R^n$ with $\alpha(t)=0$, we note that $\xi^{\varepsilon}=\xi+\varepsilon\alpha\in\mathcal{A}_{t,x}$ for any $\varepsilon\in[-1,1]$. Then the Carath\'eodory equation with respect to $\xi^{\varepsilon}$ is
\begin{align*}
	\begin{cases}
		\dot{u}_{\xi^{\varepsilon}}=L(s,\xi^{\varepsilon},\dot{\xi}^{\varepsilon},u_{\xi^{\varepsilon}}),\quad a.e., s\in[0,t],\\
		u_{\xi^{\varepsilon}}(0)=u_0(\xi^{\varepsilon}(0)).
	\end{cases}
\end{align*}
By differentiating with respect to $\varepsilon$ we have
\begin{equation}\label{eq:diff1}
	\begin{cases}
		\frac{\partial}{\partial\varepsilon}\dot{u}_{\xi^{\varepsilon}}=L^T_x(s,\xi^{\varepsilon},\dot{\xi}^{\varepsilon},u_{\xi^{\varepsilon}})\cdot\alpha+L^T_v(s,\xi^{\varepsilon},\dot{\xi}^{\varepsilon},u_{\xi^{\varepsilon}})\cdot\dot{\alpha}+L_u(s,\xi^{\varepsilon},\dot{\xi}^{\varepsilon},u_{\xi^{\varepsilon}})\frac{\partial}{\partial\varepsilon}u_{\xi^{\varepsilon}},\\
	\frac{\partial}{\partial\varepsilon}u_{\xi^{\varepsilon}}(0)=\nabla u_0(\xi^{\varepsilon}(0))\cdot\alpha(0).
	\end{cases}
\end{equation}
Similar to  \eqref{eq:solve_u} we can solve the equation above to obtain
\begin{align*}
	\frac{\partial}{\partial\varepsilon}\Big\vert_{\varepsilon=0}u_{\xi^{\varepsilon}}(s)=L^T_v(s,\xi(s),\dot{\xi}(s),u_{\xi}(s))\cdot\alpha(s),\quad s\in[0,t].
\end{align*}
In particular,
\begin{align*}
	\frac{\partial}{\partial\varepsilon}\Big\vert_{\varepsilon=0}u_{\xi^{\varepsilon}}(t)=0.
\end{align*}
Differentiating \eqref{eq:diff1} again at $\varepsilon=0$ leads to
\begin{align*}
	\begin{cases}
		\frac{\partial^2}{\partial\varepsilon^2}\Big\vert_{\varepsilon=0}\dot{u}_{\xi^{\varepsilon}}=\alpha^TL_{xx}\alpha+\alpha^TL_{xv}\dot{\alpha}+\alpha^TL_{xu}\frac{\partial}{\partial\varepsilon}\Big\vert_{\varepsilon=0}u_{\xi^{\varepsilon}}\\
		\hskip 2.5 cm +\dot{\alpha}^TL_{vx}\alpha+\dot{\alpha}^TL_{vv}\dot{\alpha}+\dot{\alpha}^TL_{vu}\frac{\partial}{\partial\varepsilon}\Big\vert_{\varepsilon=0}u_{\xi^{\varepsilon}}\\
		\hskip 2.5 cm +L_{ux}\alpha\cdot\frac{\partial}{\partial\varepsilon}\Big\vert_{\varepsilon=0}u_{\xi^{\varepsilon}}+L_{uv}\dot{\alpha}^T\cdot\frac{\partial}{\partial\varepsilon}\Big\vert_{\varepsilon=0}u_{\xi^{\varepsilon}}+L_{uu}\left(\frac{\partial}{\partial\varepsilon}\Big\vert_{\varepsilon=0}u_{\xi^{\varepsilon}}\right)^2+L_u\cdot\frac{\partial^2}{\partial\varepsilon^2}\Big\vert_{\varepsilon=0}u_{\xi^{\varepsilon}},\\
		\hskip 1.8 cm =\alpha^T(L_{xx}+2L_{xu}L^T_v+L_vL_{uu}L^T_v)\alpha+2\alpha^T(L_{xv}+L_vL_{uv})\dot{\alpha}+\dot{\alpha}^TL_{vv}\dot{\alpha}+L_u\frac{\partial^2}{\partial\varepsilon^2}\Big\vert_{\varepsilon=0}u_{\xi^{\varepsilon}}\\
		\frac{\partial^2}{\partial\varepsilon^2}\Big\vert_{\varepsilon=0}u_{\xi^{\varepsilon}}(0)=\alpha^T(0)D^2u_0(\xi(0))\alpha(0).
	\end{cases}
\end{align*}
Solving the ODE above we obtain
\begin{align*}
	&\,\frac{\partial^2}{\partial\varepsilon^2}\Big\vert_{\varepsilon=0}u_{\xi^{\varepsilon}}(s)\\
	=&\,e^{\int^s_0L_udr}\alpha^T(0)D^2u_0(\xi(0))\alpha(0)\\
	&\quad +\int^s_0e^{\int^s_{\tau}L_udr}\{\alpha^T(L_{xx}+2L_{xu}L^T_v+L_vL_{uu}L^T_v)\alpha+2\alpha^T(L_{xv}+L_vL_{uv})\dot{\alpha}+\dot{\alpha}^TL_{vv}\dot{\alpha}\}\ d\tau
\end{align*}
for all $s\in[0,t]$. Define
\begin{align*}
	J^*(\alpha)=\frac{\partial^2}{\partial\varepsilon^2}\Big\vert_{\varepsilon=0}u_{\xi^{\varepsilon}}(t).
\end{align*}
Obviously $J^*(\alpha)\geqslant0$ since $\xi$ is a minimizer of \eqref{eq:cov}.

Observe that, for a new Lagrangian
\begin{align*}
	&\,\tilde{L}(s,\alpha,\dot{\alpha},u)\\
	=&\,\alpha^T(L_{xx}+2L_{xu}L^T_v+L_vL_{uu}L^T_v)\alpha+2\alpha^T(L_{xv}+L_vL_{uv})\dot{\alpha}+\dot{\alpha}^TL_{vv}\dot{\alpha}+L_u\cdot u.
\end{align*}
$\tilde{L}$ satisfies all of conditions in (L1)-(L4) except for the fact that $\tilde{L}$ is only continuous in the time variable. We can not directly apply the results from \cite{CCJWY2020} to guarantee the $C^1$-regularity of the minizers of $J^*$.

\begin{Lem}\label{mini c1}
Suppose $\alpha:[0,t]\to \R^{n},\alpha(t)=0$ is a Lipschitz curve. If $J^{*}(\alpha)=\inf\{J^{*}(\gamma)|\gamma:[0,t]\to \R^{n},\gamma(t)=0 \text{ is a Lipschitz curve}\}$, we have $\alpha\in C^{1}([0,t],\R^{n})$.
\end{Lem}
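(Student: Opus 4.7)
The plan is to extract the weak Euler--Lagrange relation for $J^*$ by an integration-by-parts/du Bois-Reymond argument and then solve for $\dot\alpha$ explicitly using the non-degeneracy of $L_{vv}$ along the reference minimizer $\xi$. All partial derivatives of $L$ below are understood to be evaluated at $(s,\xi(s),\dot\xi(s),u_\xi(s))$, so they are continuous functions of $s\in[0,t]$; abbreviate
$$w(s):=e^{\int_s^t L_u\,dr},\quad A(s):=L_{xx}+2L_{xu}L_v^T+L_v L_{uu}L_v^T,\quad B(s):=L_{xv}+L_v L_{uv},\quad C(s):=L_{vv},$$
so that
$$J^*(\alpha)=w(0)\,\alpha(0)^T D^2u_0(\xi(0))\,\alpha(0)+\int_0^t w(s)\bigl[\alpha^T A\alpha+2\alpha^T B\dot\alpha+\dot\alpha^T C\dot\alpha\bigr]\,ds.$$

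First, I would perturb by any Lipschitz $\beta:[0,t]\to\R^n$ with $\beta(0)=\beta(t)=0$. Such $\beta$ gives an admissible variation $\alpha+\varepsilon\beta$ and kills the boundary term at $s=0$. The first-order condition $\frac{d}{d\varepsilon}\Big\vert_{\varepsilon=0}J^*(\alpha+\varepsilon\beta)=0$ reads
$$0=\int_0^t\bigl[\beta^T f(s)+\dot\beta^T g(s)\bigr]\,ds,\qquad f:=2w(A\alpha+B\dot\alpha),\quad g:=2w(B^T\alpha+C\dot\alpha),$$
both of which lie in $L^\infty([0,t])$ since $\alpha$ is Lipschitz. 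Setting $F(s):=\int_0^s f(\tau)\,d\tau$ (continuous by boundedness of $f$) and integrating the $\beta^T f$-term by parts -- a step that is legitimate without any $C^1$-regularity of the coefficients, because $F$ is absolutely continuous and $\beta$ is Lipschitz -- I obtain
$$0=\int_0^t\dot\beta^T(g-F)\,ds.$$
As $\dot\beta$ ranges over all $L^\infty$ functions of zero mean, the du Bois-Reymond lemma yields $g(s)-F(s)=c$ almost everywhere for some constant vector $c\in\R^n$.

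Finally, since $F$ is continuous, $g$ agrees a.e. with a continuous function. Using the (strengthened) Legendre condition $L_{vv}>0$ along $\xi$, which is a standing ingredient of the Tonelli machinery of Section 2 built on (L1) and the $C^{R+1}$-smoothness of $L$, I invert $C$ to obtain
$$\dot\alpha(s)=C(s)^{-1}\Bigl[\frac{F(s)+c}{2w(s)}-B^T(s)\alpha(s)\Bigr]\quad\text{a.e.}$$
The right-hand side is a continuous function of $s$, so $\dot\alpha$ admits a continuous representative; hence $\alpha\in C^1([0,t],\R^n)$. I expect the main subtle point to be the du Bois-Reymond step itself -- exactly the obstruction flagged in the paragraph preceding the lemma: the coefficients $A,B,C,w$ are only continuous, not $C^1$, in $s$, so one cannot differentiate them. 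The approach above circumvents this by transferring all derivatives onto the test function through the primitive $F$, after which $C^1$-regularity drops out directly from the ellipticity of $L_{vv}$ with no further bootstrapping.
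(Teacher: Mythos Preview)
Your argument is correct and self-contained. The paper, by contrast, does not carry out the du Bois--Reymond computation at all: it observes that $J^*$ is itself the value at time $t$ of a Herglotz-type Carath\'eodory problem with the auxiliary Lagrangian
\[
\tilde L(s,\alpha,\dot\alpha,u)=\alpha^T A\alpha+2\alpha^T B\dot\alpha+\dot\alpha^T C\dot\alpha+L_u\cdot u,
\]
checks that $\tilde L$ satisfies the standing hypotheses (L1)--(L3) together with a weakened time-regularity condition (L4$'$) from \cite{CCJWY2020}, and then invokes the general regularity theorem for Herglotz minimizers proved there. In effect the paper outsources exactly the integration-by-parts/inversion step you wrote down to an external reference.

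What each approach buys: your route is elementary and avoids the citation, exploiting that the second variation is quadratic with merely continuous (in $s$) coefficients, so the Erdmann--du Bois--Reymond form of the Euler--Lagrange equation is immediately available and the positive definiteness of $C=L_{vv}$ lets you solve for $\dot\alpha$ in one step. The paper's route is more conceptual---it identifies $J^*$ as living in the same Herglotz framework as the original problem---and this viewpoint is what makes the weight $w(s)=e^{\int_s^t L_u\,dr}$ appear natural rather than ad hoc. One minor remark on your write-up: since $A$ need not be symmetric, the first-variation term coming from $\alpha^T A\alpha$ is $\beta^T(A+A^T)\alpha$ rather than $2\beta^T A\alpha$; this changes the explicit form of $f$ but not its $L^\infty$-boundedness, so the argument is unaffected.
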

\begin{proof}
Notice that $\tilde{L}$ satisfies conditions (L1)-(L3) and (L4') in \cite{CCJWY2020} and $\alpha$ is a minimizer for $J^*$. Therefore, the results in \cite{CCJWY2020} implies that $\alpha$ is of class $C^1$.
\end{proof}

\begin{Pro}\label{mini int regu non-conj}
Suppose $(t,x)\in (0,\infty)\times \R^{n},\ z_{0}\in\mathcal{Z}(t,x)$. Then $(s,X(s,z_{0}))\notin\Sigma\cup\Gamma$ for all $s\in(0,t)$.
\end{Pro}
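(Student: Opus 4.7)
The conclusion splits into $(s, X(s; z_0)) \notin \Sigma$ and $(s, X(s; z_0)) \notin \Gamma$ for every $s \in (0, t)$. The non-singularity assertion is essentially immediate: since $z_0 \in \mathcal{Z}(t, x)$, Proposition \ref{pro:1-1} gives that $\xi(\cdot) := X(\cdot; z_0)$ is a minimizer of \eqref{eq:cov} at $(t, x)$, so Proposition \ref{pro:D^*}(4) provides differentiability of $u$ at every interior point $(s, \xi(s))$ with $s \in (0, t)$. Differentiability together with Proposition \ref{pro:D^*}(3) and Proposition \ref{pro:1-1} forces $\mathcal{Z}(s, X(s; z_0)) = \{z_0\}$, so $z_0$ is the only possible witness for conjugacy at $(s, X(s; z_0))$, and the non-conjugacy claim reduces to proving $\det X_z(s; z_0) \neq 0$ for every $s \in (0, t)$.

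To prove this I argue by contradiction: assume $X_z(s_0; z_0)\theta = 0$ for some $s_0 \in (0, t)$ and $\theta \in \R^n \setminus \{0\}$. The identity $U_z = P^T X_z$ from \eqref{eq:solve_u} gives $U_z(s_0; z_0)\theta = 0$, and the observation that $(X_z\theta, P_z\theta, U_z\theta)$ never vanishes along the flow then forces $P_z(s_0; z_0)\theta \neq 0$. I build the admissible Lipschitz variation
\begin{equation*}
\alpha(s) = \begin{cases} X_z(s; z_0)\theta, & s \in [0, s_0], \\ 0, & s \in [s_0, t], \end{cases}
\end{equation*}
which is continuous at $s_0$ and satisfies $\alpha(t) = 0$. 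The plan is to show $J^{*}(\alpha) = 0$. Since $\xi$ is a minimizer we have $J^{*}(\gamma) \geqslant 0$ on all admissible $\gamma$, so $\alpha$ would then itself minimize $J^{*}$, and Lemma \ref{mini c1} would force $\alpha \in C^{1}([0,t],\R^{n})$. However the variational equation \eqref{eq:variational_eq} with $X_z(s_0; z_0)\theta = U_z(s_0; z_0)\theta = 0$ reduces $\dot\alpha(s_0^-) = \dot X_z(s_0; z_0)\theta$ to $H_{pp}\, P_z(s_0; z_0)\theta$, which is nonzero by positive-definiteness of $H_{pp} = L_{vv}^{-1}$, whereas $\dot\alpha(s_0^+) = 0$. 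This jump contradicts $\alpha \in C^1$.

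The main technical step is therefore the claim $J^{*}(\alpha) = 0$. Since $\alpha$ and $\dot\alpha$ both vanish on $[s_0, t]$, the ODE $\dot V = Q(\alpha, \dot\alpha) + L_u V$ for $V(s) := \frac{\partial^2}{\partial\varepsilon^2}\big|_{\varepsilon=0} u_{\xi+\varepsilon\alpha}(s)$ propagates purely multiplicatively beyond $s_0$, so $J^{*}(\alpha) = V(t) = e^{\int_{s_0}^{t} L_u\, dr}\, V(s_0)$ and the problem reduces to $V(s_0) = 0$. I compute $V(s_0)$ by comparing the linear variation $\xi + \varepsilon\alpha$ with the smooth flow variation $X(\cdot\,; z_0 + \varepsilon\theta)$, which shares the first-order profile $X_z(\cdot\,;z_0)\theta$ but carries the second-order profile $\beta(\cdot) = X_{zz}(\cdot\,; z_0)(\theta, \theta)$. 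Expanding $u_{\xi^{\varepsilon}}$ to second order and integrating by parts against the Herglotz identity $\dot L_v = L_x + L_u L_v$ (using $P(0) = L_v(0) = Du_0(z_0)$ to cancel the initial contribution) yields the universal decomposition $\tfrac{\partial^2}{\partial\varepsilon^2}\big|_0 u_{\xi^{\varepsilon}}(s) = V(s) + P(s)^T \beta(s)$ valid for any polynomial variation with first order $\alpha$ and second order $\beta$. Applied to the flow variation, for which $u_{\xi^{\varepsilon}}(s) = U(s; z_0 + \varepsilon\theta)$ by the Lie equation, this gives $U_{zz}(s_0; z_0)(\theta,\theta) = V(s_0) + P(s_0)^T X_{zz}(s_0; z_0)(\theta,\theta)$. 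Differentiating the identity $U_z(s; z) = P(s; z)^T X_z(s; z)$ once more in $z$ and using $X_z(s_0; z_0)\theta = 0$ produces $U_{zz}(s_0; z_0)(\theta,\theta) = P(s_0)^T X_{zz}(s_0; z_0)(\theta,\theta)$, so the two contributions cancel and $V(s_0) = 0$, closing the argument.
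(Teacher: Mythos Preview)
Your overall architecture matches the paper's exactly: the broken Jacobi field $\alpha$, the claim $J^*(\alpha)=0$, Lemma \ref{mini c1} forcing $\alpha\in C^1$, and the jump $\dot\alpha^-(s_0)=H_{pp}P_z(s_0;z_0)\theta\neq 0=\dot\alpha^+(s_0)$ to close the contradiction. The only real difference is in how $J^*(\alpha)=0$ is obtained. The paper substitutes the explicit formula for $J^*(\alpha)$, recognizes the integrand as the total derivative $\tfrac{d}{ds}\bigl(e^{-\int_0^s L_u}\,X_z^T\,\tfrac{\partial}{\partial z}L_v\bigr)$ via the Herglotz equation, and integrates by parts to $0$. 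You instead reduce to $V(s_0)=0$ and compute $V(s_0)$ by comparing the linear variation with the flow variation $X(\cdot;z_0+\varepsilon\theta)$, using $U_z=P^T X_z$ and its $z$-derivative. Both computations are correct and encode the same cancellation.

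There is, however, a regularity gap in your version at the bottom end of the paper's hypotheses. Your comparison invokes $X_{zz}(\cdot;z_0)(\theta,\theta)$ and $U_{zz}(\cdot;z_0)(\theta,\theta)$, which exist only when the flow is $C^2$ in $z$; that requires the Lie vector field to be $C^2$, hence $H\in C^3$, hence $R\geqslant 2$. The paper allows $R\geqslant 1$, and its direct integration-by-parts uses only $X_z,P_z,U_z$ and $\tfrac{\partial}{\partial z}L_x,\tfrac{\partial}{\partial z}L_v,\tfrac{\partial}{\partial z}L_u$, all of which are well defined once $L\in C^2$ and the flow is $C^1$ in $z$. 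So for $R=1$ your argument as written does not go through; you would need either to revert to the direct computation or to insert an approximation step.
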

\begin{proof}
Fix any $\bar{s}\in (0,t)$. Proposition \ref{pro:D^*} (4) implies $(\bar{s},X(\bar{s},z_{0}))\notin \Sigma$ and we prove $(\bar{s},X(\bar{s},z_{0}))\notin \Gamma$ by contradiction.
If $(\bar{s},X(\bar{s},z_{0}))\in \Gamma$, then there exists $\theta\in \R^{n}\setminus \{0\}$ such that $X_{z}(\bar{s},z_{0})\theta=0$. By \eqref{eq:solve_u} there holds $U_{z}(\bar{s},z_{0})\theta=0$ and $P_{z}(\bar{s},z_{0})\theta\neq 0$. Let
\begin{align*}
\alpha(s)=
\begin{cases}
X_{z}(s,z_{0})\theta,& s\in[0,\bar{s}],\\
0,& s\in[\bar{s},t].
  	\end{cases}
\end{align*}
Then $\alpha$ is a Lipschitz curve, $\alpha(0)=\theta,\alpha(t)=0,\dot{\alpha}^{+}(\bar{s})=0$ and
\begin{align*}
\dot{\alpha}^{-}(\bar{s})=\dot{X}_{z}(\bar{s},z_{0})\theta=H_{px}X_{z}(\bar{s},z_{0})\theta+H_{pp}P_{z}(\bar{s},z_{0})\theta+H_{pu}U_{z}(\bar{s},z_{0})\theta
=H_{pp}P_{z}(\bar{s},z_{0})\theta \neq 0.
\end{align*}
So we have $\alpha\notin C^{1}([0,t],\R^{n})$.
On the other hand, by previous computation we obtain
\begin{align*}
&J^{*}(\alpha)=e^{\int_{0}^{t}L_{u} dr}\theta^{T}D^{2}u_{0}(z_{0})\theta
+e^{\int_{0}^{t}L_{u} dr}\int_{0}^{\bar{s}}e^{-\int_{0}^{s}L_{u} dr}[\theta^{T}X_{z}^{T}(L_{xx}+2L_{xu}L_{v}^{T}+L_{v}L{uu}L_{v}^{T})X_{z}\theta\\
&+2\theta^{T}X_{z}^{T}(L_{xv}+L_{v}L_{uv})\dot{X}_{z}\theta+\theta^{T}\dot{X}_{z}^{T}L_{vv}\dot{X}_{z}\theta] ds.
\end{align*}
Notice
\begin{align*}
\begin{cases}
\frac{\partial}{\partial z}L_{x}=L_{xx}X_{z}+L_{xv}\dot{X}_{z}+L_{xu}L_{v}^{T}X_{z}\\
\frac{\partial}{\partial z}L_{v}=L_{vx}X_{z}+L_{vv}\dot{X}_{z}+L_{vu}L_{v}^{T}X_{z},\ t\in[0,\infty),z\in\R^{n}\\
\frac{\partial}{\partial z}L_{u}=L_{ux}X_{z}+L_{uv}\dot{X}_{z}+L_{uu}L_{v}^{T}X_{z},
\end{cases}
\end{align*}
It follows that
\begin{equation*}
\begin{split}
J^{*}(\alpha)=&e^{\int_{0}^{t}L_{u} dr}\theta^{T}[D^{2}u_{0}(z_{0})\\
              &+\int_{0}^{\bar{s}}e^{-\int_{0}^{s}L_{u} dr}(X_{z}^{T}\frac{\partial}{\partial z}\Big\vert_{z=z_{0}}L_{x}+\dot{X}_{z}^{T}\frac{\partial}{\partial z}\Big\vert_{z=z_{0}}L_{v}+X_{z}^{T}L_{v}\frac{\partial}{\partial z}\Big\vert_{z=z_{0}}L_{u})ds]\theta\\
             =&e^{\int_{0}^{t}L_{u} dr}\theta^{T}[D^{2}u_{0}(z_{0})+\int_{0}^{\bar{s}}e^{-\int_{0}^{s}L_{u} dr}(\frac{d}{ds}(X_{z}^{T}\frac{\partial}{\partial z}\Big\vert_{z=z_{0}}L_{v})-L_{u}X_{z}^{T}\frac{\partial}{\partial z}\Big\vert_{z=z_{0}}L_{v})ds]\theta\\
             =&e^{\int_{0}^{t}L_{u} dr}\theta^{T}[D^{2}u_{0}(z_{0})+(e^{-\int_{0}^{s}L_{u} dr}X_{z}^{T}\frac{\partial}{\partial z}\Big\vert_{z=z_{0}}L_{v})\Big\vert_{s=0}^{s=\bar{s}}]\theta\\
             =&e^{\int_{0}^{t}L_{u} dr}\theta^{T}[D^{2}u_{0}(z_{0})+0-X_{z}^{T}(0,z_{0})\frac{\partial}{\partial z}\Big\vert_{z=z_{0}}L_{v}(0,X(0,z),\dot{X}(0,z),U(0,z))]\theta\\
             =&e^{\int_{0}^{t}L_{u} dr}\theta^{T}[D^{2}u_{0}(z_{0})-\frac{\partial}{\partial z}\Big\vert_{z=z_{0}}Du_{0}(z)]\theta\\
             =&0.
\end{split}
\end{equation*}
Recalling the fact that for any Lipschitz curve $\gamma:[0,t]\to\R^{n}$ with $\gamma(t)=0$, we have $J^{*}(\gamma)\geqslant 0$. Then by Lemma \ref{mini c1}, we conclude that $\alpha\in C^{1}([0,t],\R^{n})$. This leads to a contradiction. Therefore, $(\bar{s},X(\bar{s},z_{0}))\notin \Gamma$.
\end{proof}

\begin{Pro}\label{t small regu non-conj}
For any $\bar{z}\in \R^{n}$, there exists $r_{\bar{z}}>0$ and $t_{\bar{z}}>0$ such that $([0,t_{\bar{z}}]\times B_{r_{\bar{z}}}(\bar{z}))\cap(\Sigma\cup\Gamma)=\varnothing$.
\end{Pro}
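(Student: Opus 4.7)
The strategy combines two ingredients: (a) the characteristic flow is a local diffeomorphism near $t=0$, because $X_z(0;z)\equiv I$; and (b) for endpoints $(t,x)$ sufficiently close to $(0,\bar z)$, every minimizer of \eqref{eq:cov} must start from a point close to $\bar z$, so the starting point lies in the region where the flow is injective and non-degenerate.

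First, by continuity of $X_z$ in $(t,z)$ and the identity $X_z(0;z)\equiv I$, choose $t_1,r_1>0$ such that $\det X_z(t;z)\neq0$ on $[0,t_1]\times\overline{B_{r_1}(\bar z)}$. Shrinking $t_1$ and $r_1$ if necessary, the map $\Phi(t,z):=(t,X(t;z))$ is $C^R$ and injective on this cylinder (since $\Phi(0,z)=(0,z)$ and $D\Phi$ is close to the identity), hence a diffeomorphism onto a neighborhood of $\{(t,\bar z):0\leq t\leq t_1\}$.

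Second, I claim there exist $t_{\bar z}\in(0,t_1]$ and $r_{\bar z}>0$ such that every minimizer $\xi$ of \eqref{eq:cov} associated to $(t,x)\in(0,t_{\bar z}]\times B_{r_{\bar z}}(\bar z)$ satisfies $\xi(0)\in B_{r_1}(\bar z)$. Suppose not: take $(t_k,x_k)\to(0,\bar z)$ and corresponding minimizers $\xi_k$ with $|\xi_k(0)-\bar z|\geq r_1$. The comparison curve $\eta(s)\equiv x_k$ together with continuity of $u_0$ yields the uniform upper bound $u(t_k,x_k)\leq u_0(\bar z)+o(1)$ as $k\to\infty$. On the other hand, assumption (L3) and Grönwall applied to \eqref{eq:cara1} give control of $u_{\xi_k}$ in terms of $u_0(\xi_k(0))$ and $\int_0^{t_k}L(s,\xi_k,\dot\xi_k,0)\,ds$, after which (L2), Jensen's inequality and the $(\kappa_1,\kappa_2)$-Lipschitz-in-the-large lower bound on $u_0$ produce
\[
u_0(\xi_k(0))+\int_0^{t_k}L(s,\xi_k,\dot\xi_k,u_{\xi_k})\,ds\;\geq\;-C+t_k\,\theta_0\!\bigl(|\xi_k(0)-x_k|/t_k\bigr)e^{-\|K\|_\infty t_k}.
\]
Since $|\xi_k(0)-x_k|\geq r_1/2$ for $k$ large and $t_k\to 0$, the superlinearity of $\theta_0$ forces this lower bound to $+\infty$, contradicting the upper bound.

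Putting these together: for $(t,x)\in(0,t_{\bar z}]\times B_{r_{\bar z}}(\bar z)$, by Proposition~\ref{pro:Herglotz_Lie} every $z\in\mathcal Z_{t,x}$ produces a minimizer $\xi(\cdot)=X(\cdot;z)$ of \eqref{eq:cov}; by the claim $z\in B_{r_1}(\bar z)$, and then by injectivity of $\Phi$ on $[0,t_{\bar z}]\times\overline{B_{r_1}(\bar z)}$ such $z$ is unique, so $\mathcal Z_{t,x}$ is a singleton and $(t,x)\notin\Sigma$. Non-vanishing of $\det X_z$ at this $z$ gives $(t,x)\notin\Gamma$. The main obstacle is the uniform superlinear lower bound in Step~2: the $r$-dependence of $L$ must be decoupled from $\dot\xi_k$, which is precisely what (L3) (uniform bound on $L_r$) achieves via Grönwall control of the Carathéodory equation, reducing the estimate to the classical non-contact setting where superlinearity in $|\dot\xi_k|$ is directly available.
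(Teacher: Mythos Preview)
Your argument is correct and follows the same two-step architecture as the paper: (a) local invertibility of $\Phi(t,z)=(t,X(t;z))$ near $t=0$, then (b) confinement of all optimal initial points $\xi(0)\in\mathcal Z_{t,x}$ to the region where $\Phi$ is injective and $\det X_z\neq 0$.

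The genuine difference lies in how you establish (b). The paper simply invokes the a~priori velocity bound of Proposition~\ref{pro:Herglotz_Lie}(3): for any minimizer $\xi$ of \eqref{eq:cov} one has $|\dot\xi|\leqslant C(t,x)$ with $C$ locally bounded, hence $|z-x|\leqslant t\,C(t,x)$, and choosing $t_{\bar z}\leqslant r_{\bar z}/C$ forces $z\in B_{2r_{\bar z}}(\bar z)$ directly. Your route instead re-derives this confinement by contradiction, combining the upper bound from the constant competitor with a superlinear lower bound obtained via (L2), (L3), Gr\"onwall, and the Lipschitz-in-the-large hypothesis on $u_0$. This is exactly the mechanism underlying the proof of Proposition~\ref{pro:Herglotz_Lie}(3) itself (through Lemma~3.1 of \cite{CCJWY2020}), so you are effectively reproving that bound in situ rather than citing it. The paper's version is shorter and cleaner within the logical flow of the article; your version has the virtue of being self-contained and makes transparent which structural assumptions (superlinearity, uniform $L_r$ bound) are actually doing the work. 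One small point: $K$ is only $L^\infty_{\rm loc}$, so $\|K\|_\infty$ in your displayed estimate should be read as $\|K\|_{L^\infty[0,t_1]}$.
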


\begin{proof}
Let $\phi:[0,\infty)\times\R^{n}\to[0,\infty)\times\R^{n},\ (t,z)\to(t,X(t,z))$. Then $\phi$ is of class $C^{R}$ and $X_{z}(0,z)=Id,\ \forall z\in\R^{n}$. By the inverse function theorem, there exists $t'_{\bar{z}}>0$ and $r_{\bar{z}}>0$ such that $\phi$ is a $C^{R}$ diffeomorphism on $[0,t'_{\bar{z}}]\times B_{2r_{\bar{z}}}(\bar{z})$. Let $t_{\bar{z}}=\min\{t'_{\bar{z}},\frac{r_{\bar{z}}}{C(t'_{\bar{z}},|\bar{z}|+r_{\bar{z}})}\}$, where $C$ is defined in Proposition \ref{pro:Herglotz_Lie} (3). Now fix any $(t,x)\in [0,t_{\bar{z}}]\times B_{r_{\bar{z}}}(\bar{z})$. If $z\in \mathcal{Z}(t,x)$, we have $|z-\bar{z}| \leqslant |z-x|+|x-\bar{z}| \leqslant t_{\bar{z}}C(t,_{\bar{z}},|\bar{z}|+r_{\bar{z}})+r_{\bar{z}} \leqslant r_{\bar{z}}+r_{\bar{z}} = 2r_{\bar{z}}$, that is $\mathcal{Z}(t,x)\subset B_{2r_{\bar{z}}}(\bar{z})$. This implies $\mathcal{Z}(t,x)=z(t,x)$ is a singleton and $X_{z}(t,z(t,x))$ is invertible. Therefore $(t,x)\notin \Sigma\cup\Gamma$. In conclusion, $([0,t_{\bar{z}}]\times B_{r_{\bar{z}}}(\bar{z}))\cap(\Sigma\cup\Gamma)=\varnothing$.
\end{proof}

\begin{Pro}\label{regu non-conj pro}
Suppose $(\bar{t},\bar{x})\in (0,\infty)\times \R^{n},\ (\bar{t},\bar{x})\notin \Sigma\cup\Gamma$ and $\mathcal{Z}(\bar{t},\bar{x})=\bar{z}$. Then there exists a neighborhood $W$ of $(\bar{t},\bar{z})$ and a neighborhood $N$ of $(\bar{t},\bar{x})$ such that $\phi:W\to N,\ (t,z)\to(t,X(t,z))$ is a $C^{R}$ diffeomorphism and we have:
\begin{enumerate}[\rm (1)]
	\item For any $(t,x)\in N$, $\mathcal{Z}(t,x)=\Pi_{z}(\phi^{-1}(t,x))$ is a singleton, i.e., $N\cap(\Sigma\cup\Gamma)=\varnothing$.
	\item For all $(t,x)\in N$,
	\begin{align*}
		u(t,x)=&\,U(\phi^{-1}(t,x)),\\
		\nabla u(t,x)=&\,P(\phi^{-1}(t,x)),\\
		u_{t}(t,x)=&\,-H(t,x,P(\phi^{-1}(t,x)),U(\phi^{-1}(t,x))).
	\end{align*}
	Moreover, $u\in C^{R+1}(N,\R)$.
\end{enumerate}
\end{Pro}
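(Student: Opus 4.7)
The plan is to combine the inverse function theorem applied to $\phi$, the pointwise uniqueness of the minimizer at $(\bar t, \bar x)$, and the $C^{R+1}$-stability of minimizers from Proposition \ref{pro:approx} to transport a local diffeomorphism from the $(t,z)$-side to the $(t,x)$-side. The identities for $u$, $\nabla u$ and $u_t$ will then be read off from Propositions \ref{pro:Herglotz_Lie}, \ref{pro:sensitive} and \ref{pro:1-1}, and the regularity conclusion $u\in C^{R+1}$ follows by a single bootstrap through the Hamilton--Jacobi equation.

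First I would use $(\bar t,\bar x)\notin\Gamma$ together with $\mathcal{Z}(\bar t,\bar x)=\{\bar z\}$: the definition of a conjugate point then forces $\det X_z(\bar t,\bar z)\neq 0$. The Lie system \eqref{eq:Lie2} has $C^{R+1}$ right-hand side and initial data of regularity $C^R$ (since $Du_0\in C^R$), so $(X,P,U)$ are jointly $C^R$ in $(t,z)$ and $\phi(t,z)=(t,X(t,z))$ is a $C^R$ map whose differential at $(\bar t,\bar z)$ is invertible. The inverse function theorem then produces open neighborhoods $W$ of $(\bar t,\bar z)$ and $N_0$ of $(\bar t,\bar x)$ such that $\phi:W\to N_0$ is a $C^R$-diffeomorphism; since $X_z$ remains invertible on $W$, we already have $N_0\cap\Gamma=\varnothing$.

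I expect the main obstacle to be the next step: shrinking $N_0$ to a neighborhood $N$ on which $\mathcal{Z}(t,x)\subset\Pi_z(W)$. Were this to fail, one could find $(t_k,x_k)\to(\bar t,\bar x)$ and minimizers $\xi_k$ of \eqref{eq:cov} for $(t_k,x_k)$ with $(t_k,\xi_k(0))\notin W$; Proposition \ref{pro:approx}(1) would then deliver a subsequence converging in $C^{R+1}$ to a minimizer $\xi$ at $(\bar t,\bar x)$, which by $\mathcal{Z}(\bar t,\bar x)=\{\bar z\}$ and Proposition \ref{pro:1-1} must satisfy $\xi(0)=\bar z$; hence $\xi_k(0)\to\bar z$ and $(t_k,\xi_k(0))\in W$ eventually, a contradiction. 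Once this containment is secured, any $z\in\mathcal{Z}(t,x)$ satisfies $(t,z)\in W$ and $\phi(t,z)=(t,x)$, so $z=\Pi_z(\phi^{-1}(t,x))$; combined with the non-emptiness of $\mathcal{Z}(t,x)$ from Proposition \ref{pro:Herglotz_Lie}, this promotes to the equality in (1).

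Finally, Proposition \ref{pro:Herglotz_Lie} yields $u(t,x)=U(\phi^{-1}(t,x))$; differentiability of $u$ at $(t,x)\in N\setminus\Sigma$ together with Proposition \ref{pro:sensitive} gives $\nabla u(t,x)=P(\phi^{-1}(t,x))$; and the Hamilton--Jacobi equation at a point of differentiability produces $u_t(t,x)=-H(t,x,\nabla u(t,x),u(t,x))$. For the smoothness bootstrap, $\phi^{-1}\in C^R$ combined with $U,P\in C^R$ gives $u,\nabla u\in C^R(N)$; since $H\in C^{R+1}$, the identity for $u_t$ then lifts $u_t$ to $C^R$, so all first partial derivatives of $u$ are $C^R$ and hence $u\in C^{R+1}(N)$, completing (2).
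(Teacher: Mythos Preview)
Your proof is correct and follows essentially the same route as the paper: non-degeneracy of $X_z(\bar t,\bar z)$ plus the inverse function theorem to get the local $C^R$-diffeomorphism $\phi$, then Proposition~\ref{pro:approx} to force all minimizers into $\Pi_z(W)$ after shrinking, and finally the formulas and the bootstrap to $C^{R+1}$ via the Hamilton--Jacobi equation. The only cosmetic differences are that the paper invokes Proposition~\ref{pro:approx}(2) directly rather than your contradiction argument through~(1), and cites Proposition~\ref{pro:D^*}(1) for the $u_t$ identity where you cite the equation itself.

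One small slip to fix: your claim ``since $X_z$ remains invertible on $W$, we already have $N_0\cap\Gamma=\varnothing$'' is premature. A point $(t,x)\in N_0$ could, at that stage, still have a minimizer $z\notin\Pi_z(W)$ with $\det X_z(t,z)=0$, which would make $(t,x)\in\Gamma$. The conclusion $N\cap\Gamma=\varnothing$ is only secured \emph{after} your shrinking step establishes $\mathcal{Z}(t,x)\subset\Pi_z(W)$, so simply defer that assertion to the end of the argument for~(1).
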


\begin{proof}
$(\bar{t},\bar{x})\notin \Sigma\cup\Gamma$ implies $X_{z}(\bar{t},\bar{z})$ is non-degenerate. By the inverse function theorem, we know that there exists a neighborhood $W$ of $(\bar{t},\bar{z})$ and a neighborhood $N$ of $(\bar{t},\bar{x})$, such that $\phi:W\to N,\ (t,z)\to(t,X(t,z))$ is a $C^{R}$ diffeomorphism.
By Proposition \ref{pro:approx} (2), we can choose $N$ small enough such that for any $(t,x)\in N,\ \mathcal{Z}(t,x)=\Pi_{z}(\phi^{-1}(t,x))=z(t,x)$ is a singleton. That is, $N\cap(\Sigma\cup\Gamma)=\varnothing$. This completes the proof of (1).

It follows from (1) that $u(t,x)=U(\phi^{-1}(t,x))$ and $\nabla u(t,x)=P(\phi^{-1}(t,x)),\ \forall (t,x)\in N$. Then by Proposition \ref{pro:D^*} (1), we have $u_{t}(t,x)=-H(t,x,P(\phi^{-1}(t,x)),U(\phi^{-1}(t,x)))$. Therefore, $\nabla u(t,x)$ and $u_{t}(t,x)$ are of class $C^{R}$, which implies $u\in C^{R+1}(N,\R)$.
\end{proof}

\begin{Cor}\label{regu non-conj cor}
$\Sigma\cup\Gamma$ is a closed set, and $u$ is of class $C^{R+1}$ on $(\Sigma\cup\Gamma)^{c}$.
\end{Cor}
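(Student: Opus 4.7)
The plan is to deduce the corollary directly from Propositions \ref{t small regu non-conj} and \ref{regu non-conj pro}. I would prove closedness by showing that $(\Sigma\cup\Gamma)^{c}$ is open, producing around each of its points an open neighborhood disjoint from $\Sigma\cup\Gamma$ on which $u$ is automatically $C^{R+1}$; both conclusions of the corollary then fall out simultaneously.

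Fix any $(\bar{t},\bar{x})\in(\Sigma\cup\Gamma)^{c}$ with $\bar{t}>0$. Since $(\bar{t},\bar{x})\notin\Sigma$, the remark following the definition of irregular points tells us $u$ is differentiable at $(\bar{t},\bar{x})$, so $D^{+}u(\bar{t},\bar{x})$ is a singleton, and hence so is its extremal subset $D^{*}u(\bar{t},\bar{x})$. The one-to-one correspondence $D^{*}u(\bar{t},\bar{x})\longleftrightarrow \mathcal{Z}_{\bar{t},\bar{x}}$ from Proposition \ref{pro:1-1} then forces $\mathcal{Z}(\bar{t},\bar{x})=\{\bar{z}\}$ for some $\bar{z}\in\R^{n}$. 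With this, the hypotheses of Proposition \ref{regu non-conj pro} are satisfied, and it delivers a neighborhood $N$ of $(\bar{t},\bar{x})$ with $N\cap(\Sigma\cup\Gamma)=\varnothing$ and $u\in C^{R+1}(N,\R)$. This yields openness of $(\Sigma\cup\Gamma)^{c}$ in $(0,+\infty)\times\R^{n}$ together with the $C^{R+1}$ regularity of $u$ on it.

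If one wishes to view $\Sigma\cup\Gamma$ as closed in the larger half-space $[0,+\infty)\times\R^{n}$, Proposition \ref{t small regu non-conj} covers the boundary slice $\{0\}\times\R^{n}$: each $\bar{z}$ admits a box $[0,t_{\bar{z}}]\times B_{r_{\bar{z}}}(\bar{z})$ that misses $\Sigma\cup\Gamma$ entirely, so such points also have open neighborhoods in the complement.

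Because the corollary is essentially a repackaging of the two preceding propositions, no serious obstacle is anticipated. The only small subtlety is the step that upgrades the assumption $(\bar{t},\bar{x})\notin\Sigma$ (non-differentiability in the sense of $\SING$) to the set-theoretic statement $|\mathcal{Z}(\bar{t},\bar{x})|=1$ required to invoke Proposition \ref{regu non-conj pro}; this is supplied precisely by the $D^{*}u\leftrightarrow\mathcal{Z}$ correspondence of Proposition \ref{pro:1-1}.
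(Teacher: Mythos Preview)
Your proposal is correct and follows the same route the paper implicitly takes: the corollary is an immediate consequence of Proposition~\ref{regu non-conj pro}, which furnishes for each $(\bar t,\bar x)\notin\Sigma\cup\Gamma$ an open neighborhood $N$ with $N\cap(\Sigma\cup\Gamma)=\varnothing$ and $u\in C^{R+1}(N)$. Your extra care in justifying $|\mathcal{Z}(\bar t,\bar x)|=1$ via Proposition~\ref{pro:1-1}, and in handling the boundary slice $\{0\}\times\R^n$ through Proposition~\ref{t small regu non-conj}, simply fills in details the paper leaves to the reader.
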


\begin{Pro}\label{non-conj Z finite}
Suppose $(t,x)\in (0,\infty)\times\R^{n},\ (t,x)\notin\Gamma$. Then $\mathcal{Z}(t,x)$ and $D^{*}u(t,x)$ are finite sets.
\end{Pro}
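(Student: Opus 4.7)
The plan is to establish finiteness of $\mathcal{Z}(t,x)$ first, and then transfer the conclusion to $D^{*}u(t,x)$ via the one-to-one correspondence of Proposition \ref{pro:1-1}. The strategy is the classical combination: \emph{compact plus discrete equals finite}. So I will prove (i) $\mathcal{Z}(t,x)$ is compact, and (ii) every point of $\mathcal{Z}(t,x)$ is isolated in $\mathcal{Z}(t,x)$ (in fact, already in $\widetilde{\mathcal{Z}}(t,x)$).

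For compactness, I would first bound $\mathcal{Z}(t,x)$: by Proposition \ref{pro:Herglotz_Lie}(3), every minimizer $\xi$ of \eqref{eq:cov} satisfies $|\xi(s)|\leqslant C(t,x)$ for $s\in[0,t]$, so in particular $|z|=|\xi(0)|\leqslant C(t,x)$ for every $z\in\mathcal{Z}(t,x)$. For closedness, let $z_{k}\in\mathcal{Z}(t,x)$ with $z_{k}\to z_{\infty}$, and let $\xi_{k}$ be the associated minimizers. Applying Proposition \ref{pro:approx}(1) along the constant sequence $(t_{k},x_{k})\equiv(t,x)$, a subsequence of $\{\xi_{k}\}$ converges in $C^{R+1}$ to a minimizer $\xi_{\infty}$ of \eqref{eq:cov}, and $\xi_{\infty}(0)=z_{\infty}$. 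Hence $z_{\infty}\in\mathcal{Z}(t,x)$, and $\mathcal{Z}(t,x)$ is compact.

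For discreteness, fix $z_{0}\in\mathcal{Z}(t,x)$. Because $(t,x)\notin\Gamma$ and the underlying trajectory $X(\cdot;z_{0})$ is by construction a minimizer (Propositions \ref{pro:Herglotz_Lie} and \ref{pro:sensitive}), the definition of conjugate point forces $\det X_{z}(t;z_{0})\neq 0$. By the inverse function theorem applied to the $C^{R}$ map $z\mapsto X(t;z)$ at $z_{0}$, there exists an open neighborhood $V$ of $z_{0}$ such that $X(t;\cdot):V\to X(t;V)$ is a diffeomorphism. In particular, $z_{0}$ is the only solution in $V$ of $X(t;z)=x$, so $z_{0}$ is isolated in $\widetilde{\mathcal{Z}}(t,x)\supseteq\mathcal{Z}(t,x)$. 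Combining the two parts, $\mathcal{Z}(t,x)$ is a compact set whose points are isolated, hence finite. Finally, Proposition \ref{pro:1-1} gives a bijection between $\mathcal{Z}(t,x)$ and $D^{*}u(t,x)$, so the latter is finite as well.

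The only step that requires any care is closedness of $\mathcal{Z}(t,x)$, since a priori one only knows that limits of minimizers are minimizers (Proposition \ref{pro:approx}), which needs to be translated into a statement about initial points via uniform convergence of the $\xi_{k}$; this is exactly why one invokes $C^{R+1}$-convergence rather than merely pointwise limits. Everything else is a direct application of the inverse function theorem together with the non-conjugacy hypothesis read through the definition of $\Gamma$.
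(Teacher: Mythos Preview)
Your proof is correct and follows essentially the same approach as the paper's: isolate each point of $\mathcal{Z}(t,x)$ via the inverse function theorem (using non-conjugacy to get $\det X_z(t;z_0)\neq 0$), bound the set via Proposition \ref{pro:Herglotz_Lie}(3), and then transfer finiteness to $D^*u(t,x)$ through Proposition \ref{pro:1-1}. You are in fact slightly more careful than the paper in explicitly verifying closedness of $\mathcal{Z}(t,x)$ (via Proposition \ref{pro:approx}), whereas the paper passes directly from ``bounded with isolated points'' to ``finite'' without comment.
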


\begin{proof}
For any $\bar{z}\in \mathcal{Z}(t,x)$, $X_{z}(t,\bar{z})$ is non-degenerate. Then there exists a neighborhood $V$ of $\bar{z}$ such that $X(t,z)\neq x,\ \forall z\in V$. This implies $\mathcal{Z}(t,x)$ consists of isolated points. On the other hand, by Proposition \ref{pro:Herglotz_Lie} (3), $\mathcal{Z}(t,x)$ is bounded. Therefore, $\mathcal{Z}(t,x)$ is a finite set. Combing this with Proposition \ref{pro:1-1}, we conclude that $D^{*}u(t,x)$ is also a finite set.
\end{proof}

\begin{Pro}\label{irregu non-conj pro}
Suppose $(\bar{t},\bar{x})\in (0,\infty)\times \R^{n}$, $(\bar{t},\bar{x})\in\Sigma\setminus \Gamma$. By Proposition \ref{non-conj Z finite}, we let $\mathcal{Z}(t,x)=\{z_{1},z_{2},\cdots,z_{k}\},k\geqslant 2$. Then there exist neighborhoods $W_{i}$ of $(\bar{t},z_{i})$ and a neighborhood $N$ of $(\bar{t},\bar{x})$ such that $\phi_{i}:W_{i}\to N$, $(t,z)\to(t,X(t,z))$, $i=1,2,\ldots,k$ are all $C^{R}$ diffeomorphisms and we have:
\begin{enumerate}[\rm (1)]
	\item $W_{i}\cap W_{j}=\varnothing$ and $P(W_{i})\cap P(W_{j})=\varnothing$ for all $i,j\in\{1,2,\ldots,k\}$, $i\neq j$.
	\item Let $v_{i}(t,x)=U(\phi_{i}^{-1}(t,x))$, $(t,x)\in N$, $i=1,2,\ldots,k$. Then $v_{i}\in C^{R+1}(N,\R)$ and
	\begin{align*}
		\nabla v_{i}(t,x)=&\,P(\phi_{i}^{-1}(t,x)),\\
		\frac{\partial}{\partial t}v_{i}(t,x)=&\,-H(t,x,P(\phi_{i}^{-1}(t,x)),U(\phi_{i}^{-1}(t,x)))
	\end{align*}
	for all $(t,x)\in N$, $i=1,2,\ldots,k$.
    \item We have that $u(t,x)=\min_{1\leqslant i\leqslant k}\{v_{i}(t,x)\}$ and
    \begin{align*}
    	\mathcal{Z}(t,x)=&\,\{\Pi_{z}(\phi_{i}^{-1}(t,x)): v_{i}(t,x)=u(t,x), 1\leqslant i\leqslant k\}\\
    	D^{*}u(t,x)=&\,\{(\frac{\partial}{\partial t}v_{i}(t,x),\nabla v_{i}(t,x)): v_{i}(t,x)=u(t,x), 1\leqslant i\leqslant k\}
    \end{align*}
    for all $(t,x)\in N$. Moreover, $N\cap\Gamma=\varnothing$.
    \item $N\cap\Sigma$ is contained in a finite union of $n$-dimensional hypersurfaces  of class $C^{R+1}$.
\end{enumerate}
\end{Pro}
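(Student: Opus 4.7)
The plan is to replay the inverse-function argument of Proposition \ref{regu non-conj pro} at each of the $k$ preimages $z_1,\ldots,z_k$ simultaneously and then to use the stability of minimizers (Proposition \ref{pro:approx}) to ensure that no extra minimizer branches appear nearby. Since $(\bar{t},\bar{x})\notin\Gamma$, each matrix $X_z(\bar{t},z_i)$ is non-singular, so the map $\phi_i(t,z)=(t,X(t,z))$ (whose Jacobian determinant equals $\det X_z$) is, by the inverse function theorem, a local $C^R$ diffeomorphism from a neighborhood of $(\bar{t},z_i)$ to a neighborhood of $(\bar{t},\bar{x})$. To arrange both $W_i\cap W_j=\varnothing$ and $P(W_i)\cap P(W_j)=\varnothing$ for $i\neq j$, I first observe that the momenta $p_i:=P(\bar{t},z_i)$ are pairwise distinct: Proposition \ref{pro:1-1} puts $\mathcal{Z}(\bar{t},\bar{x})$ in bijection with $D^*u(\bar{t},\bar{x})$, and Proposition \ref{pro:D^*}(3) shows that any two $(q,p)\in D^*u$ sharing the same $p$ must have the same $q$. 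Continuity of $P$ then allows one to shrink the $W_i$ further, after which $N:=\bigcap_{i=1}^k\phi_i(W_i)$ (followed by a final compatibility shrinking so that $\phi_i^{-1}(N)\subset W_i$) serves as the common neighborhood of $(\bar{t},\bar{x})$.

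For each $i$, set $v_i:=U\circ\phi_i^{-1}$ and write $z_i(t,x):=\Pi_z\phi_i^{-1}(t,x)$. Differentiating the identity $X(t,z_i(t,x))=x$ in $x$ and $t$ gives $\partial_xz_i=X_z^{-1}$ and $\partial_tz_i=-X_z^{-1}\dot{X}$; combining with $U_z=P^TX_z$ from \eqref{eq:solve_u} and $\dot{U}=P\cdot\dot{X}-H$ from \eqref{eq:Lie2} yields $\nabla_xv_i=P\circ\phi_i^{-1}$ and $\partial_tv_i=-H(t,x,\nabla v_i,v_i)$, both of class $C^R$, so $v_i\in C^{R+1}(N,\R)$. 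Since $X(t,z_i(t,x))=x$, the path $X(\cdot,z_i(t,x))$ is admissible for $(t,x)$ and hence $u(t,x)\leqslant v_i(t,x)$. For the reverse inequality I argue by contradiction: if there were $(t_n,x_n)\to(\bar{t},\bar{x})$ in $N$ with $z_n\in\mathcal{Z}(t_n,x_n)\setminus\bigcup_i\Pi_z(W_i)$, then Proposition \ref{pro:approx}(1) would extract a $C^{R+1}$-convergent subsequence of the minimizers $X(\cdot,z_n)$ with limit some $X(\cdot,z^*)$, where $z^*\in\mathcal{Z}(\bar{t},\bar{x})=\{z_1,\ldots,z_k\}$; this forces $z_n$ eventually into $\Pi_z(W_{i_0})$ for the corresponding index $i_0$, a contradiction. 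Therefore $\mathcal{Z}(t,x)\subset\{z_1(t,x),\ldots,z_k(t,x)\}$, giving $u(t,x)=\min_i v_i(t,x)$; the claimed description of $\mathcal{Z}(t,x)$ and (via Proposition \ref{pro:1-1}) of $D^*u(t,x)$ in (3) is then automatic. Because every minimizer at $(t,x)\in N$ now passes through one of the diffeomorphisms $\phi_i$, its $X_z$ is non-degenerate, whence $N\cap\Gamma=\varnothing$.

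For (4), the distinctness $p_i\neq p_j$ yields $\nabla_x(v_i-v_j)(\bar{t},\bar{x})\neq 0$, so shrinking $N$ once more and applying the implicit function theorem makes $\{v_i=v_j\}\cap N$ a $C^{R+1}$ hypersurface of dimension $n$ in $\R\times\R^n$ for every $i\neq j$. At a point of $N$ where exactly one $v_i$ attains the minimum, $u\equiv v_i$ in a neighborhood and the point is regular; hence $N\cap\Sigma\subset\bigcup_{1\leqslant i<j\leqslant k}\{v_i=v_j\}$, proving (4). The main technical hurdle lies in the stability step of paragraph two: one must know that \emph{entire} minimizing arcs (not merely their endpoints) converge and simultaneously juggle the shrinking of $W_i$ and $N$ so that the diffeomorphism, disjointness, and containment conditions are preserved together; both rest squarely on Proposition \ref{pro:approx}.
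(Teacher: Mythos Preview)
Your proposal is correct and follows essentially the same approach as the paper's proof: inverse function theorem at each $z_i$, separation of the $W_i$ and $P(W_i)$ via $p_i\neq p_j$, the stability argument from Proposition~\ref{pro:approx} to confine $\mathcal{Z}(t,x)$ to the local branches, and the implicit function theorem for the hypersurfaces $\{v_i=v_j\}$. The only cosmetic differences are that the paper establishes (2) by invoking Proposition~\ref{regu non-conj pro} (viewing $v_i$ as the value function with initial points restricted to $W_i$) rather than by your direct chain-rule computation via \eqref{eq:solve_u} and \eqref{eq:Lie2}, and that the paper cites part (2) of Proposition~\ref{pro:approx} directly where you run the equivalent contradiction argument through part (1).
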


\begin{proof}
$(\bar{t},\bar{x})\notin(\Sigma\cup\Gamma)$ implies that $X_{z}(\bar{t},z_{i}),\ i=1,2,\cdots,k$ are non-degenerate. By the inverse function theorem, we know that there exist neighborhoods $W_{i}$ of $(\bar{t},z_{i})$ and a neighborhood $N$ of $(\bar{t},\bar{x})$ such that $\phi_{i}:W_{i}\to N,\ (t,z)\to(t,X(t,z)),\ i=1,2,\cdots,k$ are $C^{R}$ diffeomorphisms.

By Proposition \ref{pro:1-1}, $(-H(\bar{t},\bar{x},P(\bar{t},z_{i}),u(\bar{t},\bar{x})),P(\bar{t},z_{i}))$, $1\leqslant i\leqslant k$, are all different. This implies $P(\bar{t},z_{i})\neq P(\bar{t},z_{j}),\ i\neq j$. Therefore, by taking $N$ sufficiently small, we can make $W_{i}\cap W_{j}=\varnothing$ and $P(W_{i})\cap P(W_{j})=\varnothing$ for all $i,j\in\{1,2,\cdots,k\}$, $i\neq j$. This completes the proof of (1)

In fact, $v_{i}$ can be seen as a solution of \eqref{eq:cov} when $\xi(0)$ is limited in $W_{i}$. In this sense, $v_{i}$ is regular and non-conjugate on $N$, $1\leqslant i\leqslant k$. Then the conclusion in (2) is a direct consequence of Proposition \ref{regu non-conj pro}.

Now we turn to the proof of (3). By Proposition \ref{pro:approx} (2), we can choose $N$ small enough such that $\mathcal{Z}(t,x)\subset\bigcup_{i=1}^{k}W_{i}$ for all $(t,x)\in N$. Then we have $u(t,x)=\min_{1\leqslant i\leqslant k}\{v_{i}(t,x)\}$ and $\mathcal{Z}(t,x)=\{\Pi_{z}(\phi_{i}^{-1}(t,x)): v_{i}(t,x)=u(t,x), 1\leqslant i\leqslant k\}$ for all $(t,x)\in N$, which also implies $N\cap\Gamma=\varnothing$. Combing this with Proposition \ref{pro:1-1} and (2), we obtain $D^{*}u(t,x)=\{(\frac{\partial}{\partial t}v_{i}(t,x),\nabla v_{i}(t,x)): v_{i}(t,x)=u(t,x), 1\leqslant i\leqslant k\}$ for all $(t,x)\in N$.

Finally, we can shrink $N$ again such that for any $i=1,2,\cdots,k$ and $(t,z)\in W_{i}$, $X_{z}(t,z)$ is non-degenerate, which implies $N\cap\Gamma=\varnothing$. When $i\neq j$, we have $v_{i}(\bar{t},\bar{x})=v_{j}(\bar{t},\bar{x})=u(\bar{t},\bar{x})$ and $Dv_{i}(\bar{t},\bar{x})-Dv_{j}(\bar{t},\bar{x})\neq 0$. By the implicit function theorem, we can take $N$ sufficiently small such that $\{(t,x)\in N: v_{i}(t,x)=v_{j}(t,x)\}$, $i\neq j$, are all $n$-dimensional hyper-surfaces  of class $C^{R+1}$. Notice that for any $(t',x')\in N\cap\Sigma$, there exists $i\neq j$ such that $(t',x')\in\{(t,x)\in N: v_{i}(t,x)=v_{j}(t,x)\}$. Therefore, $N\cap\Sigma$ is contained in a finite union of n-dimensional hyper-surfaces  of class $C^{R+1}$. This completes the proof of (4).
\end{proof}

\begin{Cor}\label{cor:conj closed}
$\Gamma$ is a closed set.
\end{Cor}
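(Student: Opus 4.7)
The plan is to derive the corollary as a short set-theoretic consequence of two facts already established: Corollary \ref{regu non-conj cor}, which says $\Sigma\cup\Gamma$ is closed, and Proposition \ref{irregu non-conj pro}(3), which asserts that every point of $\Sigma\setminus\Gamma$ has a neighborhood $N$ with $N\cap\Gamma=\varnothing$.

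I would prove closedness of $\Gamma$ by showing its complement (in $(0,\infty)\times\R^n$) is open. Pick any $(t,x)\in\Gamma^c$ and split into two cases according to whether $(t,x)\in\Sigma\cup\Gamma$ or not. If $(t,x)\notin\Sigma\cup\Gamma$, then Corollary \ref{regu non-conj cor} supplies an open neighborhood of $(t,x)$ contained in $(\Sigma\cup\Gamma)^c\subset\Gamma^c$. If instead $(t,x)\in\Sigma\setminus\Gamma$, then Proposition \ref{irregu non-conj pro}(3) (applied at this point, which is non-conjugate and irregular) directly produces a neighborhood $N$ of $(t,x)$ satisfying $N\cap\Gamma=\varnothing$, i.e.\ $N\subset\Gamma^c$. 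In either case $(t,x)$ is interior to $\Gamma^c$, so $\Gamma^c$ is open and $\Gamma$ is closed.

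There is essentially no obstacle here: the corollary is a repackaging of results already proved. The only thing to double-check is that the dichotomy above is exhaustive, i.e.\ that every $(t,x)\in\Gamma^c$ belongs either to $(\Sigma\cup\Gamma)^c$ or to $\Sigma\setminus\Gamma$, which is immediate because $\Sigma\cup\Gamma = (\Sigma\setminus\Gamma)\cup\Gamma$ implies $(\Sigma\cup\Gamma)\setminus\Gamma=\Sigma\setminus\Gamma$. An equivalent and perhaps cleaner phrasing would be to write $\Gamma^c=(\Sigma\cup\Gamma)^c\cup(\Sigma\setminus\Gamma)$ and note that both summands are open (the first by Corollary \ref{regu non-conj cor}, the second because Proposition \ref{irregu non-conj pro}(3) shows each of its points admits a neighborhood disjoint from $\Gamma$, and such a neighborhood is automatically contained in $\Sigma\setminus\Gamma$ after possibly intersecting with the open set produced by applying the same proposition at that point). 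I would present the argument in the first, case-based form, as it avoids needing to argue that $\Sigma\setminus\Gamma$ itself is open.
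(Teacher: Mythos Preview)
Your proposal is correct and follows essentially the same approach as the paper: decompose $\Gamma^c=(\Sigma\cup\Gamma)^c\cup(\Sigma\setminus\Gamma)$, then use the openness of $(\Sigma\cup\Gamma)^c$ for the first piece and Proposition~\ref{irregu non-conj pro}(3) for the second. The only cosmetic difference is that the paper cites Proposition~\ref{regu non-conj pro}(1) directly for the first case, whereas you cite its immediate consequence Corollary~\ref{regu non-conj cor}.
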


\begin{proof}
Indeed, $\Gamma^{c}=(\Sigma\cup\Gamma)^{c}\cup(\Sigma\setminus\Gamma)$. Choose any $(t,x)\in\Gamma^{c}$. If $(t,x)\in(\Sigma\cup\Gamma)^{c}$, by Proposition 3.7 (1), there exists a neighborhood $N$ of $(t,x)$ such that $N\subset (\Sigma\cup\Gamma)^{c}\subset \Gamma^{c}$. If $(t,x)\in\Sigma\setminus\Gamma$, Proposition \ref{irregu non-conj pro} (3) implies there exists a neighborhood $N'$ of $(t,x)$ such that $N'\subset\Gamma^{c}$. Therefore, $\Gamma^{c}$ is open, that is, $\Gamma$ is closed.
\end{proof}

\begin{Pro}
Suppose $(t_{0},x_{0})\in\Sigma\cup\Gamma$. Let $M=C(t_{0}+1,|x_{0}|+1)$, $h=\min\{1,M^{-1}\}$, where $C$ is defined in Proposition \ref{pro:Herglotz_Lie} (3). Then for any $\varepsilon\in(0,h]$, there exists $x_{\varepsilon}\in B_{\varepsilon M}(x_{0})$ such that $(t_{0}+\varepsilon,x_{\varepsilon})\in\Sigma$.
\end{Pro}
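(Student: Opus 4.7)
The natural approach is to argue by contradiction, using continuity of minimizers, Proposition \ref{mini int regu non-conj}, and Brouwer's fixed point theorem on the closed ball $\overline{B_{\varepsilon M}(x_{0})}$. Fix $\varepsilon\in(0,h]$ and assume, for contradiction, that $(t_{0}+\varepsilon,y)\notin\Sigma$ for every $y$ in $\overline{B_{\varepsilon M}(x_{0})}$. Under this hypothesis each such $y$ is a regular point of $u$, so Proposition \ref{pro:1-1} forces $\mathcal{Z}(t_{0}+\varepsilon,y)$ to be a singleton $\{z(y)\}$ and the corresponding minimizer $\xi_{y}$ of \eqref{eq:cov} (with $\xi_{y}(t_{0}+\varepsilon)=y$ and $\xi_{y}(0)=z(y)$) to be unique; Proposition \ref{pro:approx}(2) combined with uniqueness then upgrades $y\mapsto\xi_{y}$ to a continuous map in the $C^{R+1}$-topology, so in particular $y\mapsto z(y)$ is continuous on $\overline{B_{\varepsilon M}(x_{0})}$.

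Next define $f:\overline{B_{\varepsilon M}(x_{0})}\to\R^{n}$ by $f(y):=\xi_{y}(t_{0})=X(t_{0};z(y))$. The choice $M=C(t_{0}+1,|x_{0}|+1)$ together with $\varepsilon\leqslant h\leqslant M^{-1}$ forces $|y|\leqslant|x_{0}|+1$ and $t_{0}+\varepsilon\leqslant t_{0}+1$, so (the monotonicity implicit in) the bound of Proposition \ref{pro:Herglotz_Lie}(3) yields $|\dot{\xi}_{y}(s)|\leqslant M$ on $[0,t_{0}+\varepsilon]$, whence
\[
|f(y)-y|=|\xi_{y}(t_{0})-\xi_{y}(t_{0}+\varepsilon)|\leqslant M\varepsilon.
\]

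The decisive structural ingredient is Proposition \ref{mini int regu non-conj}: since $z(y)\in\mathcal{Z}(t_{0}+\varepsilon,y)$ and $t_{0}\in(0,t_{0}+\varepsilon)$, the intermediate point $(t_{0},X(t_{0};z(y)))$ on the minimizing characteristic from $z(y)$ must avoid $\Sigma\cup\Gamma$. Because $(t_{0},x_{0})\in\Sigma\cup\Gamma$ by hypothesis, this gives $f(y)\neq x_{0}$ for every $y\in\overline{B_{\varepsilon M}(x_{0})}$.

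To close the argument, set $G(y):=y-f(y)+x_{0}$. Then $|G(y)-x_{0}|=|y-f(y)|\leqslant M\varepsilon$, so $G$ is a continuous self-map of the convex compact set $\overline{B_{\varepsilon M}(x_{0})}$, and Brouwer's fixed point theorem produces $y^{*}$ with $G(y^{*})=y^{*}$, i.e.\ $f(y^{*})=x_{0}$---a direct contradiction with the previous paragraph. The main obstacle is conceptual rather than technical: the topological Brouwer step forces $x_{0}$ into the image of $f$, while Proposition \ref{mini int regu non-conj} forbids it, and it is exactly this tension between a fixed-point inclusion and the prohibition of interior singular/conjugate points along minimizers that converts a soft topological statement into genuine propagation of singularities.
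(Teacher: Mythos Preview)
Your argument is correct and follows essentially the same route as the paper: contradiction, continuity of the minimizer map on the closed ball, the Lipschitz bound from Proposition~\ref{pro:Herglotz_Lie}(3) to trap the auxiliary map $G(y)=y-f(y)+x_{0}$ in $\overline{B_{\varepsilon M}(x_{0})}$, Brouwer to produce $f(y^{*})=x_{0}$, and Proposition~\ref{mini int regu non-conj} to contradict $(t_{0},x_{0})\in\Sigma\cup\Gamma$. The only cosmetic differences are that the paper obtains continuity of $y\mapsto\xi_{y}$ via continuity of $Du$ on the slice (semiconcavity plus differentiability) and continuous dependence of the Lie system on terminal data, rather than via Proposition~\ref{pro:approx}, and that you are slightly more explicit about using the closed ball and about the monotonicity of $C(\cdot,\cdot)$ needed to push the a priori bound from $(t_{0}+1,|x_{0}|+1)$ down to $(t_{0}+\varepsilon,y)$.
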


\begin{proof}
We prove it by contradiction. If there exists $\varepsilon_{0}\in(0,h]$ such that $(t_{0}+\varepsilon_{0},x)\notin\Sigma,\ \forall x\in B_{\varepsilon_{0}M}(x_{0})$, then by semiconcavity of $u$, $Du(t_{0}+\varepsilon_{0},x)$ exists for all $x\in B_{\varepsilon_{0}M}(x_{0})$ and is continuous with $x$ on $B_{\varepsilon_{0}M}(x_{0})$. For $x\in B_{\varepsilon_{0}M}(x_{0})$, let $\bar{X}(t,x),\bar{P}(t,x),\bar{U}(t,x),t\in[0,t_{0}+\varepsilon_{0}]$ be the solution of \eqref{eq:Lie2} with terminal condition
\begin{align*}
\begin{cases}
\bar{X}(t_{0}+\varepsilon_{0},x)=x\\
\bar{P}(t_{0}+\varepsilon_{0},x)=Du(t_{0}+\varepsilon_{0},x)\\
\bar{U}(t_{0}+\varepsilon_{0},x)=u(t_{0}+\varepsilon_{0},x).
\end{cases}
\end{align*}
Then $\bar{X},\bar{P},\bar{U}$ is continuous with $(t,x)$ and for any $x\in B_{\varepsilon_{0}M}(x_{0})$, $\bar{X}(\cdot,x):[0,t_{0}+\varepsilon_{0}]\to\R^{n}$ is the unique minimizer for $u(t_{0}+\varepsilon_{0},x)$.
Let $\Lambda(x)=x_{0}-\bar{X}(t_{0},x)+x,\ x\in B_{\varepsilon_{0}M}(x_{0})$. Then $\Lambda(x)$ is continuous with $x$, and when $|x-x_{0}|\leqslant \varepsilon_{0}M$, we have $|\Lambda(x)-x_{0}|=|x-\bar{x}(t_{0},x)|=|\bar{x}(t_{0}+\varepsilon_{0},x)-\bar{x}(t_{0},x)|\leqslant\varepsilon_{0}M$. By Brouwer's fixed point theorem, there exists $x_{\varepsilon_{0}}$ such that $\Lambda(x_{\varepsilon_{0}})=x_{\varepsilon_{0}}$, that is $\bar{X}(t_{0},x_{\varepsilon_{0}})=x_{0}$. This implies $(t_{0},x_{0})\notin\Sigma\cup\Gamma$ by Proposition \ref{mini int regu non-conj} and leads to a contradiction with $(t_{0},x_{0})\in\Sigma\cup\Gamma$. Therefore, for any $\varepsilon\in(0,h]$, there exists $x_{\varepsilon}\in B_{\varepsilon M}(x_{0})$ such that $(t_{0}+\varepsilon,x_{\varepsilon})\in\Sigma$.
\end{proof}

\begin{Cor}
$\Sigma\cup\Gamma=\bar{\Sigma}$.
\end{Cor}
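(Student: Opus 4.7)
The plan is to prove the two inclusions $\bar{\Sigma}\subseteq\Sigma\cup\Gamma$ and $\Sigma\cup\Gamma\subseteq\bar{\Sigma}$ separately, leveraging results that have already been established in this section.

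For the inclusion $\bar{\Sigma}\subseteq\Sigma\cup\Gamma$, I would simply observe that $\Sigma\subseteq\Sigma\cup\Gamma$, and by Corollary \ref{regu non-conj cor} the set $\Sigma\cup\Gamma$ is closed. Taking the closure on both sides yields $\bar{\Sigma}\subseteq\overline{\Sigma\cup\Gamma}=\Sigma\cup\Gamma$, which is exactly what is needed. This direction is essentially immediate once closedness is in hand.

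For the reverse inclusion $\Sigma\cup\Gamma\subseteq\bar{\Sigma}$, the only nontrivial part is to show $\Gamma\subseteq\bar{\Sigma}$, since $\Sigma\subseteq\bar{\Sigma}$ is trivial. Fix $(t_{0},x_{0})\in\Gamma$; in particular $(t_{0},x_{0})\in\Sigma\cup\Gamma$, so the hypothesis of the preceding proposition is satisfied. That proposition supplies constants $M$ and $h>0$ such that for every $\varepsilon\in(0,h]$ there exists $x_{\varepsilon}\in B_{\varepsilon M}(x_{0})$ with $(t_{0}+\varepsilon,x_{\varepsilon})\in\Sigma$. Since $|(t_{0}+\varepsilon,x_{\varepsilon})-(t_{0},x_{0})|\leqslant\varepsilon+\varepsilon M\to 0$ as $\varepsilon\to 0^{+}$, we obtain a sequence of points in $\Sigma$ converging to $(t_{0},x_{0})$, which shows $(t_{0},x_{0})\in\bar{\Sigma}$.

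There is no real obstacle here: the whole content of the statement is already packaged into the previous two results, namely the closedness of $\Sigma\cup\Gamma$ (Corollary \ref{regu non-conj cor}) and the forward-in-time production of singular points arbitrarily close to any point of $\Sigma\cup\Gamma$ (the preceding proposition, whose proof via Brouwer's fixed point theorem is the genuine analytic input). The present corollary is a two-line assembly of these two facts, so the proof will be very short.
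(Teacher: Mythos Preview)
Your proof is correct and is exactly the intended argument: the paper states the corollary without proof immediately after the proposition producing nearby singular points, so the two-line assembly of that proposition with the closedness of $\Sigma\cup\Gamma$ (Corollary~\ref{regu non-conj cor}) is precisely what is implicit there.
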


\begin{Pro}
Suppose $(t_{0},x_{0})\in\Gamma$ and $z_{0}\in\mathcal{Z}(t_{0},x_{0})$ such that $X_{z}(t_{0},x_{0})$ is degenerate. Then
\begin{enumerate}[\rm (1)]
	\item $\lim_{t\to t_{0}^{-}}\|\nabla^{2}u(t,X(t,z_{0}))\|=+\infty$.
    \item If in addition $(t_{0},x_{0})\in\Gamma\setminus \Sigma$ and $\{x_{i}\},\{p_{i}\}$ satisfy $\lim_{i\to \infty}x_{i}=x_{0}$, $\lim_{i\to \infty}\frac{x_{i}-x_{0}}{|x_{i}-x_{0}|}=v\notin\mbox{\rm Im}\,X_{z}(t_{0},z_{0})$ and $(-H(t_{0},x_{i},p_{i},u(t_{0},x_{i})),p_{i})\in D^{*}u(t_{0},x_{i})$. Then we have
    \begin{align*}
    	\lim_{i\to \infty}\left|\frac{p_{i}-p_{0}}{x_{i}-x_{0}}\right|=+\infty.
    \end{align*}
\end{enumerate}
\end{Pro}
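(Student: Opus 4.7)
The plan is to combine the matrix identity
\[
\nabla^{2} u(t, X(t, z_{0}))\, X_{z}(t, z_{0}) = P_{z}(t, z_{0}),\qquad t\in (0,t_{0}),
\]
with the degeneracy of $X_{z}(t_{0}, z_{0})$. I would derive the identity by differentiating the pointwise formula $\nabla u(t,x)=P(t,z(t,x))$ in $x$ and using $X_{z}(t,z(t,x))\,z_{x}(t,x)=I$; its validity along the open trajectory $\{(t,X(t,z_{0}))\,:\,t\in(0,t_{0})\}$ is supplied by Proposition \ref{mini int regu non-conj} and the $C^{R+1}$-regularity in Corollary \ref{regu non-conj cor}.

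For part (1), I would pick $\theta\in\R^{n}\setminus\{0\}$ with $X_{z}(t_{0},z_{0})\theta=0$. The computation leading to \eqref{eq:solve_u} gives $U_{z}(t_{0};z_{0})\theta=P^{T}(t_{0};z_{0})X_{z}(t_{0};z_{0})\theta=0$, while $(X_{z}\theta,P_{z}\theta,U_{z}\theta)$ solves the linear ODE \eqref{eq:variational_eq} with initial value $\theta\neq 0$, so it cannot vanish at $t_{0}$; hence $P_{z}(t_{0},z_{0})\theta\neq 0$. Applying the identity above to $\theta$ and taking norms yields
\[
\|\nabla^{2} u(t, X(t, z_{0}))\|\geqslant \frac{|P_{z}(t, z_{0})\theta|}{|X_{z}(t, z_{0})\theta|},
\]
whose numerator tends to $|P_{z}(t_{0},z_{0})\theta|>0$ and whose denominator tends to $0$ as $t\to t_{0}^{-}$.

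For part (2), since $(t_{0},x_{0})\in\Gamma\setminus\Sigma$, Proposition \ref{pro:1-1} gives $\mathcal{Z}(t_{0},x_{0})=\{z_{0}\}$ and $p_{0}=P(t_{0},z_{0})$; similarly each $p_{i}$ corresponds to a unique $z_{i}$ with $X(t_{0},z_{i})=x_{i}$ and $P(t_{0},z_{i})=p_{i}$. Proposition \ref{pro:approx} forces $z_{i}\to z_{0}$, and $z_{i}\neq z_{0}$ because $x_{i}\neq x_{0}$. Setting $e_{i}:=(z_{i}-z_{0})/|z_{i}-z_{0}|$ and extracting a subsequence with $e_{i}\to e$, the Taylor expansions
\begin{align*}
x_{i}-x_{0} &= X_{z}(t_{0}, z_{0})(z_{i}-z_{0}) + o(|z_{i}-z_{0}|),\\
p_{i}-p_{0} &= P_{z}(t_{0}, z_{0})(z_{i}-z_{0}) + o(|z_{i}-z_{0}|)
\end{align*}
produce a dichotomy on $e$: if $X_{z}(t_{0},z_{0})e\neq 0$, the first line would give $v=X_{z}(t_{0},z_{0})e/|X_{z}(t_{0},z_{0})e|\in\mbox{\rm Im}\,X_{z}(t_{0},z_{0})$, contradicting the hypothesis; thus $X_{z}(t_{0},z_{0})e=0$, and the same kernel-vs-image argument as in (1) yields $P_{z}(t_{0},z_{0})e\neq 0$. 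Therefore $|x_{i}-x_{0}|/|z_{i}-z_{0}|\to 0$ while $|p_{i}-p_{0}|/|z_{i}-z_{0}|$ is eventually bounded below by $|P_{z}(t_{0},z_{0})e|/2>0$, giving blow-up along every convergent subsequence and hence along the full sequence.

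The chief obstacle is the linear-algebraic step in part (2): the dichotomy emerges only after passing to a subsequence in $e_{i}$, so one must argue that since \emph{every} convergent subsequence yields $+\infty$, the original sequence does too. Part (1) is routine once the pointwise differentiation formula for $\nabla u$ along the non-conjugate trajectory is justified, which is exactly what Proposition \ref{mini int regu non-conj} and Corollary \ref{regu non-conj cor} provide.
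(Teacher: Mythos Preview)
Your proposal is correct and follows essentially the same route as the paper: in part (1) the paper derives the identity $P_{z}(t,z_{0})=\nabla^{2}u(t,X(t,z_{0}))X_{z}(t,z_{0})$ by differentiating $P(t,z)=\nabla u(t,X(t,z))$ in $z$ (rather than your dual version via $z(t,x)$), then picks $\theta\in\ker X_{z}(t_{0},z_{0})$ and uses $P_{z}(t_{0},z_{0})\theta\neq 0$ exactly as you do; in part (2) the paper carries out precisely your subsequence argument, phrasing it as ``for any subsequence $\{z_{i'}\}$ there is a further subsequence with $(z_{i_k}-z_{0})/|z_{i_k}-z_{0}|\to\nu$'', reaching the same dichotomy $X_{z}(t_{0},z_{0})\nu=0$ and concluding via the arbitrariness of the initial subsequence. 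The only cosmetic difference is that for the matrix identity you should cite Proposition~\ref{regu non-conj pro} (which gives the local $C^{R}$ inverse $\phi^{-1}$ and hence the smoothness of $z(t,x)$) rather than merely Corollary~\ref{regu non-conj cor}.
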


\begin{proof}
By Proposition \ref{mini int regu non-conj} and Proposition \ref{regu non-conj pro}, we know that for any $\varepsilon\in(0,t_{0})$, there exists $r_{\varepsilon}>0$ such that:
\begin{align*}
P(t,z)=\nabla u(t,X(t,z)),\ \forall (t,z)\in [0,t_{0}-\varepsilon]\times B_{r_{\varepsilon}}(z_{0}).
\end{align*}
It follows that
\begin{align*}
P_{z}(t,z)=\nabla^{2} u(t,X(t,z))X_{z}(t,z),\ \forall (t,z)\in [0,t_{0}-\varepsilon]\times B_{r_{\varepsilon}}(z_{0}).
\end{align*}
Since $\varepsilon$ is arbitrary, we obtain that
\begin{align*}
P_{z}(t,z_{0})=\nabla^{2} u(t,X(t,z_{0}))X_{z}(t,z_{0}),\ \forall t\in[0,t_{0}).
\end{align*}
Because $X_{z}(t_{0},x_{0})$ is degenerate, there exists $\theta\in\R^{n}\setminus \{0\}$ such that $X_{z}(t_{0},z_{0})\theta=0$, which implies $P_{z}(t_{0},z_{0})\theta\neq0$. And we have
\begin{align*}
P_{z}(t,z_{0})\theta=\nabla^{2} u(t,X(t,z_{0}))X_{z}(t,z_{0})\theta,\ \forall t\in[0,t_{0}).
\end{align*}
Therefore,
\begin{align*}
\lim_{t\to t_{0}^{-}}\|\nabla^{2}u(t,X(t,z_{0}))\| \geqslant \lim_{t\to t_{0}^{-}}\left|\frac{P_{z}(t,z_{0})\theta}{X_{z}(t,z_{0})\theta}\right| =+\infty.
\end{align*}
Now, we turn to the proof of (2). By Proposition \ref{pro:1-1}, there exists $z_{i}\in\mathcal{Z}(t_{0},x_{i})$ such that $P(t_{0},z_{i})=p_{i},\ i=1,2,\cdots$. Proposition \ref{pro:approx} (2) implies $\lim_{i\to \infty}z_{i}=z_{0}$. For any subsequence $\{z_{i'}\}$ of $\{z_{i}\}$, there exists a sub-subsequence $\{z_{i_{k}}\}$ such that $\lim_{k\to\infty}\frac{z_{i_{k}}-z_{0}}{|z_{i_{k}}-z_{0}|}=\nu\in\R^{n}\setminus \{0\}$. Now we have
\begin{align*}
&p_{i_{k}}-p_{0}=P(t_{0},z_{i_{k}})-P(t_{0},z_{0})=P_{z}(t_{0},z_{0})(z_{i_{k}}-z_{0})+o(|z_{i_{k}}-z_{0}|)\\
&x_{i_{k}}-x_{0}=X(t_{0},z_{i_{k}})-X(t_{0},z_{0})=X_{z}(t_{0},z_{0})(z_{i_{k}}-z_{0})+o(|z_{i_{k}}-z_{0}|),\\
\end{align*}
for $k=1,2,\ldots$. It follows that
\begin{align*}
&\frac{p_{i_{k}}-p_{0}}{|z_{i_{k}}-z_{0}|}=P_{z}(t_{0},z_{0})\frac{z_{i_{k}}-z_{0}}{|z_{i_{k}}-z_{0}|}+o(1)\\
&\frac{x_{i_{k}}-x_{0}}{|z_{i_{k}}-z_{0}|}=X_{z}(t_{0},z_{0})\frac{z_{i_{k}}-z_{0}}{|z_{i_{k}}-z_{0}|}+o(1),\quad k=1,2,\ldots.
\end{align*}
Notice that if $X_{z}(t_{0},z_{0})\nu=\theta\neq0$, then $v=\lim_{k\to\infty}\frac{x_{i_{k}}-x_{0}}{|x_{i_{k}}-x_{0}|}=\frac{\theta}{|\theta|}\in \mbox{\rm Im}\,X_{z}(t_{0},z_{0})$, which leads to a contradiction. So we have $X_{z}(t_{0},z_{0})\nu=0$. This implies $\lim_{k\to\infty}\frac{x_{i_{k}}-x_{0}}{|z_{i_{k}}-z_{0}|}=0$ and $\lim_{k\to\infty}\frac{p_{i_{k}}-p_{0}}{|z_{i_{k}}-z_{0}|}=P_{z}(t_{0},z_{0})\nu\neq0$. Therefore,
\begin{align*}
\lim_{k\to\infty}\frac{|p_{i_{k}}-p_{0}|}{|x_{i_{k}}-x_{0}|}=\lim_{k\to\infty}\frac{|p_{i_{k}}-p_{0}|}{|z_{i_{k}}-z_{0}|}\cdot\frac{|z_{i_{k}}-z_{0}|}{|x_{i_{k}}-x_{0}|}
=|P_{z}(t_{0},z_{0})\nu|\cdot+\infty=+\infty.
\end{align*}
Since $\{z_{i'}\}$ is arbitrary, we obtain that $\lim_{i\to \infty}|\frac{p_{i}-p_{0}}{x_{i}-x_{0}}|=+\infty$.
\end{proof}

\begin{Pro}
$\mu(\Sigma)=\mu(\Gamma)=0$, where $\mu(\cdot)$ denotes the Lebesgue's measure of a subset of $\R^{n+1}$.
\end{Pro}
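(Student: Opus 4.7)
My plan is to handle the two claims separately, since they rely on quite different ingredients.

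For $\mu(\Sigma)=0$, the key input is already in Proposition \ref{pro:D^*}(2): $u$ is locally Lipschitz on $(0,+\infty)\times\R^n$. By the Remark following the definition of conjugate/irregular points, $\Sigma=\SING$ coincides exactly with the set of non-differentiability points of $u$. Rademacher's theorem (or, more strongly, Alexandrov's theorem together with the local semiconcavity of $u$) then gives that $\Sigma$ has Lebesgue measure zero in $\R^{n+1}$. This step is essentially immediate.

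For $\mu(\Gamma)=0$, I would apply Sard's theorem to the characteristic map
\begin{equation*}
\phi:[0,+\infty)\times\R^n\to[0,+\infty)\times\R^n,\qquad \phi(t,z)=(t,X(t,z)),
\end{equation*}
which is of class $C^R$ with $R\geqslant 1$ (as already used repeatedly in Section 3). The Jacobian determinant of $\phi$ at $(t,z)$ equals $\det X_z(t,z)$, so the critical set of $\phi$ is $C=\{(t,z):\det X_z(t,z)=0\}$. By the definition of conjugate points, every $(t,x)\in\Gamma$ can be written as $(t,X(t,z))=\phi(t,z)$ for some $z$ with $\det X_z(t,z)=0$; hence $\Gamma\subseteq \phi(C)$. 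Since the source and target of $\phi$ have the same dimension $n+1$ and $\phi$ is $C^1$, Sard's theorem yields $\mu(\phi(C))=0$, and therefore $\mu(\Gamma)=0$.

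The argument has essentially no hard step: both pieces reduce to textbook theorems (Rademacher/Alexandrov for $\Sigma$, Sard for $\Gamma$) once the preceding structural results are in hand. The only point that deserves mention is the regularity bookkeeping for Sard's theorem: between manifolds of equal dimension, $C^1$ regularity of $\phi$ suffices to conclude that the set of critical values has Lebesgue measure zero, and this is guaranteed by the assumption $R\geqslant 1$ on $L$ and $u_0$, so no further smoothness hypotheses are needed.
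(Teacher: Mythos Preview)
Your proof is correct. For $\Sigma$ you and the paper argue identically (local semiconcavity/Lipschitz regularity of $u$ plus Rademacher). For $\Gamma$ both arguments rest on Sard's theorem, but the implementation differs slightly: the paper fixes $t$, applies Sard to the slice map $X(t,\cdot):\R^n\to\R^n$ to get that $\{X(t,z):\det X_z(t,z)=0\}$ has $n$-dimensional measure zero, and then invokes Fubini to assemble these null slices into an $(n+1)$-dimensional null set containing $\Gamma$; you instead apply Sard once to the full $(n+1)$-dimensional map $\phi(t,z)=(t,X(t,z))$, whose Jacobian determinant is $\det X_z(t,z)$. Your route is a single step and is legitimate because $C^1$ regularity suffices for Sard between spaces of equal dimension; the paper's slice-by-slice route only uses the regularity of $X$ in $z$ and sidesteps any concern about joint $(t,z)$-regularity, though under the standing assumptions that joint regularity holds anyway.
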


\begin{proof}
$u\in\mbox{\rm SCL}_{loc}\,((0,+\infty]\times\R^{n})$ implies $\mu(\Sigma)=0$. Fix any $t\in(0,+\infty)$, $X(t,\cdot)$ is a $C^{R}$ map on $\R^{n}$. By Sard's Theorem, we have $\mu(\{X(t,z)|z\in\R^{n},\det{X_{z}(t,z)}=0\})=0$. Using Fubini's Theorem, we know that $\mu(\{X(t,z)|(t,z)\in(0,+\infty)\times\R^{n},\det{X_{z}(t,z)}=0\})=0$. Therefore,  $\mu(\Gamma)=0$.
\end{proof}

\section{Strict singular characteristics from and to non-conjugate points}

In this section, we will analyze the local structure of strict singular characteristics from and to a non-conjugate point. This approach gives us more information on the propagation of singularities especially with geometric intuition.

%\subsection{Strict singular characteristics from and to non-conjugate points}

Recall some basic notions from convex analysis. For any compact convex subset $K$ of $\R^n$ and $\theta\in\R^n\setminus\{0\}$, the \emph{exposed face of $K$ in direction $\theta$} is the set
	\begin{align*}
		\text{epf}\,(K,\theta)=\{x\in K: \langle y-x,\theta\rangle\geqslant0\ \text{for all}\ y\in K\}.
	\end{align*}
A set $\{x_0, ... , x_k\}$ of points of $\R^n$ is said to be \emph{geometrically independent}, or affinely independent, if the equations $\sum_{i=0}^ka_ix_i=0$ and $\sum_{i=0}^ka_i=0$ hold only if each $a_i=0$.

Fix $(t_0,x_0)\in\Sigma\setminus\Gamma$. By Proposition \ref{irregu non-conj pro} and Corollary \ref{cor:conj closed}, there exists a neighborhood $N\subset\Gamma^{c}$ of $(t_0,x_0)$ and $v_i\in C^{R+1}(N,\R)$, $i=1,\cdots,k,\ k\geqslant 2$ such that for all $(t,x)\in N$, 
\begin{equation}\label{eq:non_conj1}
	\begin{split}
		u(t,x)=&\,\min_{1\leqslant i\leqslant k}\{v_{i}(t,x)\},\\
		D^{*}u(t,x)=&\,\{Dv_i(t,x):v_{i}(t,x)=u(t,x)\},
	\end{split}
\end{equation}
and $v_1(t_0,x_0)=\cdots=v_k(t_0,x_0)$. Since we shall handle certain exposed face of $D^+u(t,x)$, in general we define
\begin{equation}\label{eq:D2}
	D^{\#}u(t,x)=\mbox{\rm co}\,\{Dv_i(t,x)\}_{i=1}^{k'},\qquad (t,x)\in N,
\end{equation}
where $\{v_i\}$ is a family of smooth functions in \eqref{eq:non_conj1}. 

%\begin{defn}
%We call $(t_0,x_0)\in\Sigma\setminus\Gamma$ is \emph{non-degenerate} if
%\begin{enumerate}[(i)]
%	\item The vectors in $\{Dv_i(t_0,x_0)\}_{i=1}^{k'}$ are linearly independent.
%	\item $(q(t_0,x_0),p(t_0,x_0))\in\mbox{\rm ri}\,(D^{\#}(t_0,x_0))$.
%\end{enumerate}
%\end{defn}

\subsection{Minimal energy elements on various exposed faces}

In this section, we shall deal with the exposed face containing the unique minimal energy element $(\bar{q}(t,x),\bar{p}(t,x))$ of the convex function $(q,p)\mapsto q+H(t,x,p,u(t,x))$ on some exposed face of $D^+u(t,x)$ in the form $D^{\#}u(t,x)$. Thus, we recall the following well known result of convex optimization.

\begin{Pro}\label{pro:min_convex}
Let $f$ be a convex function on $\R^n$ and let $C\subset\R^n$ be a convex set and consider the following minimizing problem:
\begin{equation}\label{eq:min_convex}
	\mbox{\rm Min}\,\, f\quad\mbox{\rm subject to}\quad x\in C.
\end{equation}
Then, $x$ be a minimizer of \eqref{eq:min_convex} if and only if\, $0\in\partial f(x)+N_C(x)$, where
$$
N_C(x)=\{\theta\in\R^n: \langle\theta,y-x\rangle\leqslant0, y\in C\},\quad x\in C.
$$
\end{Pro}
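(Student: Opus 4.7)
The plan is to recast the constrained minimization as an unconstrained one and then invoke two standard convex-analysis facts: Fermat's rule ($0\in\partial F(x)$ characterizes global minimizers of a convex $F$) and the Moreau--Rockafellar sum rule.

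For the easier implication ($\Leftarrow$), I would argue directly. Given $0\in\partial f(x)+N_C(x)$, write $\xi+\eta=0$ with $\xi\in\partial f(x)$ and $\eta\in N_C(x)$. For any $y\in C$ the subgradient inequality yields $f(y)\geqslant f(x)+\langle\xi,y-x\rangle$, while the definition of the normal cone gives $\langle\eta,y-x\rangle\leqslant 0$, i.e. $\langle\xi,y-x\rangle=-\langle\eta,y-x\rangle\geqslant 0$. Combining these inequalities yields $f(y)\geqslant f(x)$, so $x$ solves \eqref{eq:min_convex}.

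For the converse ($\Rightarrow$), I would introduce the indicator function $\chi_C$ (equal to $0$ on $C$ and $+\infty$ off $C$) and set $F:=f+\chi_C$. Observe that $x$ being a minimizer of $f$ over $C$ is precisely the statement that $x$ is an unconstrained global minimizer of the convex function $F$ on $\R^n$, which by Fermat's rule for convex functions is equivalent to $0\in\partial F(x)$. An unwinding of the definition of the normal cone shows $\partial\chi_C(x)=N_C(x)$, so it suffices to prove the sum rule $\partial F(x)=\partial f(x)+N_C(x)$.

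The only genuine step is this sum rule, and it is where the constraint qualification must be checked; however it is trivial here, since $f$ is assumed convex (hence continuous) on the whole of $\R^n$, so $\mathrm{dom}\,f=\R^n$ and $\mathrm{ri}(\mathrm{dom}\,f)\cap\mathrm{ri}(\mathrm{dom}\,\chi_C)=\mathrm{ri}(C)\neq\varnothing$ as long as $C$ itself is nonempty (otherwise the proposition is vacuous). Under this qualification, the classical Moreau--Rockafellar theorem gives $\partial(f+\chi_C)(x)=\partial f(x)+\partial\chi_C(x)$, and the proof is complete. Thus the main (and really the only) conceptual ingredient is the subdifferential sum rule; everything else is a direct translation of definitions.
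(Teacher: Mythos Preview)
Your argument is correct. Note, however, that the paper does not supply its own proof of this proposition: it is introduced with the phrase ``we recall the following well known result of convex optimization'' and stated without demonstration, to be used as a black box in the subsequent analysis of minimal energy elements on exposed faces of $D^{\#}u(t,x)$. So there is nothing to compare your attempt against.

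For what it is worth, your approach is the standard textbook one (Fermat's rule plus the Moreau--Rockafellar sum rule for $f+\chi_C$), and the constraint qualification is indeed automatic here because the statement takes $f$ to be a finite convex function on all of $\R^n$, hence continuous everywhere. One small remark: in the paper's actual application the function $f(q,p)=q+H(t,x,p,u(t,x))$ is of class $C^{R+1}$, so $\partial f(x)=\{\nabla f(x)\}$ is a singleton and the forward direction could also be obtained by an elementary separation/projection argument without invoking the full sum rule; but your more general proof is perfectly adequate.
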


As a direct consequence of Proposition \ref{pro:min_convex} together with the structure of $D^{\#}u(t,x)$ and our assumption that $H$ is strictly convex in $p$-variable, we conclude

\begin{Lem}\label{lem:fundamental structure}
We assume $(t_0,x_0)\in\Sigma\setminus\Gamma$ satisfies \eqref{eq:non_conj1} and $D^{\#}u(t,x)$ is defined as in \eqref{eq:D2}. Then, for any $(t,x)\in N$
\begin{enumerate}[\rm (a)]
  \item There exists a unique $(\bar{q}(t,x),\bar{p}(t,x))\in D^{\#}u(t,x)$ such that:
\begin{align*}
\bar{q}(t,x)+H(t,x,\bar{p}(t,x),v_{k'}(t,x))\leqslant q+H(t,x,p,v_{k'}(t,x)),\qquad \forall (q,p)\in D^{\#}u(t,x).
\end{align*}
  \item Set $\bar{v}(t,x)=H_p(t,x,\bar{p}(t,x),v_{k'}(t,x))$. Then we have
\begin{align*}
(\bar{q}(t,x),\bar{p}(t,x))\in\mbox{\rm epf}\,(D^{\#}u(t,x),(1,\bar{v}(t,x))).
\end{align*}
\item If $(q_1,p_1),(q_2,p_2)\in \mbox{\rm epf}\,(D^{\#}u(t,x),(1,\bar{v}(t,x)))$ and $p_1=p_2$, then $q_1=q_2$.
\end{enumerate}
\end{Lem}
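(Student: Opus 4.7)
The plan is to exploit the fact that $D^\# u(t,x)$ is a compact convex polytope (the convex hull of finitely many gradients $Dv_i(t,x)$), and that the objective $f(q,p) := q + H(t,x,p,v_{k'}(t,x))$ is continuous, linear in $q$, and strictly convex in $p$. With these two features and Proposition \ref{pro:min_convex} in hand, each of (a), (b), (c) follows from a short convex-analysis argument.

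First I would dispatch existence and uniqueness in (a). Existence is immediate from continuity of $f$ and compactness of $D^\# u(t,x)$. For uniqueness, suppose $(q_1,p_1)$ and $(q_2,p_2)$ were two minimizers. Their midpoint lies in $D^\# u(t,x)$ by convexity; if $p_1\ne p_2$, strict convexity of $H(t,x,\cdot,v_{k'}(t,x))$ would strictly lower $f$ at the midpoint, contradicting minimality. Hence $p_1=p_2$, and then the linearity of $f$ in $q$ forces $q_1=q_2$.

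For (b), I would apply Proposition \ref{pro:min_convex} with $C=D^\# u(t,x)$. Since $f$ is $C^1$ at $(\bar q,\bar p)$ with gradient $(1, H_p(t,x,\bar p(t,x),v_{k'}(t,x))) = (1,\bar v(t,x))$, the optimality condition $0\in\partial f(\bar q,\bar p)+N_{D^\# u(t,x)}(\bar q,\bar p)$ collapses to $-(1,\bar v(t,x))\in N_{D^\# u(t,x)}(\bar q,\bar p)$. Unpacking the definition of the normal cone yields $\langle (1,\bar v(t,x)),(q,p)-(\bar q(t,x),\bar p(t,x))\rangle\ge 0$ for every $(q,p)\in D^\# u(t,x)$, which by definition places $(\bar q(t,x),\bar p(t,x))$ in $\mbox{\rm epf}\,(D^\# u(t,x),(1,\bar v(t,x)))$.

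Finally, for (c), the definition of exposed face says that every point of $\mbox{\rm epf}\,(D^\# u(t,x),(1,\bar v(t,x)))$ attains the common minimal value of the linear functional $(q,p)\mapsto q+\langle \bar v(t,x),p\rangle$ on $D^\# u(t,x)$. Equating this value at $(q_1,p_1)$ and $(q_2,p_2)$ and using $p_1=p_2$ forces $q_1=q_2$. The only step that requires genuine care is (b), namely verifying that the direction $(1,\bar v(t,x))$ coming from the smoothness of $f$ is exactly the direction cutting out the exposed face; uniqueness in (a) and the implication in (c) are then routine.
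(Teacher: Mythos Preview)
Your proposal is correct and follows exactly the approach the paper indicates: the paper's ``proof'' is the single sentence preceding the lemma, declaring it a direct consequence of Proposition~\ref{pro:min_convex}, the polytope structure of $D^{\#}u(t,x)$, and strict convexity of $H$ in $p$. You have simply written out those details, and each step---compactness/continuity for existence, the strict-convexity midpoint argument plus the observation $q_1+H=q_2+H\Rightarrow q_1=q_2$ for uniqueness, the normal-cone computation $\nabla f(\bar q,\bar p)=(1,\bar v)$ for (b), and the linear-functional equality for (c)---is correct.
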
 

Now, we formulate the main result in this section. 

\begin{The}\label{pro:fundamental structure}
Under the assumption of Lemma \ref{lem:fundamental structure}, if $(t_0,x_0)\in N$ satisfies:
\begin{enumerate}[\rm (i)]
  \item $v_1(t_0,x_0)=\cdots=v_{k'}(t_0,x_0)$,
  \item The vectors in $\{Dv_i(t_0,x_0)\}_{i=1}^{k'}$ are geometrically independent,
  \item $(\bar{q}(t_0,x_0),\bar{p}(t_0,x_0))\in\mbox{\rm ri}\,(D^{\#}u(t_0,x_0))$,
\end{enumerate}
then there exists $\delta>0$ such that
\begin{equation}\label{eq:strict chara 0}
\begin{cases}
&\dot{x}(t)=\bar{v}(t,x(t)),\qquad t\in[t_0-\delta,t_0+\delta],\\
&x(t_0)=x_0,
\end{cases}
\end{equation}
has a unique $C^{R+1}$ solution $x:[t_0-\delta,t_0+\delta]\to\R^n$ and for all $t\in[t_0-\delta,t_0+\delta]$
\begin{enumerate}[\rm (1)]
	\item $v_1(t,x(t))=\cdots=v_{k'}(t,x(t))$,
	\item The vectors in $\{Dv_i(t,x(t))\}_{i=1}^{k'}$ are geometrically independent,
	\item $(\bar{q}(t,x(t)),\bar{p}(t,x(t)))\in\mbox{\rm ri}\,(D^{\#}u(t,x(t)))$,
	\item $\mbox{\rm epf}\,(D^{\#}u(t,x(t)),(1,\bar{v}(t,x(t))))=D^{\#}u(t,x(t))$.
\end{enumerate}
\end{The}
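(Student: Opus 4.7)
The plan is to apply the implicit function theorem to the first-order optimality conditions that determine the barycentric coordinates of $(\bar q,\bar p)$ in the simplex $D^{\#}u$, conclude that $\bar v$ is of class $C^R$ on a neighborhood of $(t_0,x_0)$, and then run standard ODE theory on \eqref{eq:strict chara 0}; properties (1)--(4) will follow by openness arguments together with a one-line computation of $\tfrac{d}{dt}v_i(t,x(t))$.

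\textbf{Step 1 (the optimality system).} Under (ii), the map $\lambda\mapsto\sum_i\lambda_iDv_i(t_0,x_0)$ is a bijection from the simplex $\Delta_{k'-1}$ onto $D^{\#}u(t_0,x_0)$, so $(\bar q,\bar p)$ determines a unique $\bar\lambda$, and by (iii) this $\bar\lambda$ lies in the interior of $\Delta_{k'-1}$. A crucial preliminary is that \emph{(ii) and (iii) together force $\{\nabla v_i(t_0,x_0)\}_{i=1}^{k'}$ to be affinely independent in $\R^n$}. Indeed, if $\delta\neq 0$ satisfied $\sum_i\delta_i=0$ and $\sum_i\delta_i\nabla v_i=0$, then (ii) would yield $\sum_i\delta_i\partial_tv_i\neq 0$, and along the admissible line $\bar\lambda+s\delta$ the objective
\begin{align*}
F(\lambda)=\sum_i\lambda_i\partial_tv_i+H\bigl(t_0,x_0,\textstyle\sum_j\lambda_j\nabla v_j,v_{k'}(t_0,x_0)\bigr)
\end{align*}
would reduce to $F(\bar\lambda)+s\sum_i\delta_i\partial_tv_i$, a non-constant linear function of $s$, contradicting interior minimality of $\bar\lambda$.

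\textbf{Step 2 (IFT and smoothness of $\bar v$).} Eliminate $\lambda_{k'}=1-\sum_{m<k'}\mu_m$ and set $w_i=\nabla v_i-\nabla v_{k'}$. The interior first-order conditions become
\begin{align*}
\Psi_i(\mu,t,x)=\partial_t v_i-\partial_t v_{k'}+H_p\bigl(t,x,\nabla v_{k'}+\textstyle\sum_m\mu_mw_m,v_{k'}\bigr)\cdot w_i=0,\quad i=1,\ldots,k'-1.
\end{align*}
The Jacobian $(\partial_\mu\Psi)_{im}=\langle H_{pp}w_m,w_i\rangle$ is the Gram matrix of $\{w_i\}$ in the inner product induced by the positive definite $H_{pp}$; by Step 1 it is invertible at $(\bar\mu,t_0,x_0)$. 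Since the data are $C^R$, the IFT produces a $C^R$ solution $\bar\mu(t,x)$ on a neighborhood of $(t_0,x_0)$, hence $C^R$ functions $\bar\lambda(t,x)\in\mathrm{int}\,\Delta_{k'-1}$, $\bar p(t,x)=\sum_i\bar\lambda_i\nabla v_i$, and $\bar v(t,x)=H_p(t,x,\bar p(t,x),v_{k'}(t,x))$.

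\textbf{Step 3 (ODE and verification of (1)--(4)).} Picard--Lindel\"of applied to \eqref{eq:strict chara 0} with the $C^R$ vector field $\bar v$ yields a unique $C^{R+1}$ solution $x:[t_0-\delta,t_0+\delta]\to\R^n$ for some $\delta>0$. Shrinking $\delta$, the trajectory stays in the IFT neighborhood where $\bar\lambda(t,x(t))$ is interior, giving (3); affine independence of $\{Dv_i(t,x(t))\}$ is an open condition, giving (2). For (1), the interior first-order conditions read $(1,\bar v)\cdot(Dv_i-Dv_{k'})=0$, i.e.\ $\partial_tv_i+\bar v\cdot\nabla v_i$ is independent of $i$, so $\tfrac{d}{dt}[v_i(t,x(t))-v_j(t,x(t))]=0$; combined with (i), the $v_i$'s coincide along the trajectory. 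For (4), the same relation $(1,\bar v)\perp\mathrm{aff}\,D^{\#}u(t,x(t))$ makes the linear functional $(q,p)\mapsto q+\bar v\cdot p$ constant on $D^{\#}u(t,x(t))$, so every element minimizes $\langle\cdot,(1,\bar v)\rangle$ and $\mathrm{epf}(D^{\#}u(t,x(t)),(1,\bar v(t,x(t))))=D^{\#}u(t,x(t))$.

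\textbf{Main obstacle.} The only non-routine point is the implication in Step 1 that (ii)+(iii) force the $\nabla v_i$'s to be affinely independent. This is exactly what makes the Jacobian of $\Psi$ nonsingular and the IFT applicable; without it the parametric minimizer could fail to be even Lipschitz, and no meaningful ODE could be written. Everything after Step 1 is a combination of the classical inverse/implicit function theorem, strict convexity of $H$ in $p$, and standard ODE regularity theory.
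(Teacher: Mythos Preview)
Your proposal is correct and follows essentially the same approach as the paper's own proof, which is organized through three preparatory lemmas (Lemma~\ref{lem:linear idp}, Lemma~\ref{lem:minimal H 2}, Lemma~\ref{lem:vector C R}) establishing, respectively, the affine independence of the $\nabla v_i$'s, the parametrization of $(\bar q,\bar p)$ by barycentric coordinates, and the $C^R$ regularity of $\bar\lambda,\bar p,\bar v$ via the implicit function theorem applied to $D_\lambda E=0$. Your Step~1 is precisely the content of Lemma~\ref{lem:linear idp} (the paper phrases the contradiction through the identity $\mathrm{epf}(D^{\#}u,(1,\bar v))=D^{\#}u$ at $(t_0,x_0)$, whereas you argue directly that $F$ would be non-constant linear along an admissible line; these are two ways of reading the same first-order condition), your Step~2 matches Lemma~\ref{lem:vector C R} with the identical Jacobian $(\nabla V)^TH_{pp}\nabla V$, and your Step~3 reproduces the paper's final verification of (1)--(4) almost verbatim.
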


The proof of Theorem \ref{pro:fundamental structure} is based on the following Lemma \ref{lem:linear idp}, Lemma \ref{lem:minimal H 2} and Lemma \ref{lem:vector C R}.

\begin{Lem}\label{lem:linear idp}
Under the assumptions of Theorem \ref{pro:fundamental structure}, there exists a neighborhood $N_1\subset N$ of $(t_0,x_0)$ such that for all $(t,x)\in N_1$,
\begin{enumerate}[\rm (1)]
  \item $\{Dv_i(t,x)\}_{i=1}^{k'}$ are geometrically independent.
  \item $\{\nabla v_i(t,x)-\nabla v_{k'}(t,x):1\leqslant i\leqslant k'-1\}$ are linearly independent.
\end{enumerate}
\end{Lem}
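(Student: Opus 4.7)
The plan is to separate the pointwise content of the lemma from its neighborhood version. By Proposition~\ref{irregu non-conj pro} every $v_i$ is of class $C^{R+1}$ on $N$, so both $Dv_i(t,x)$ and $\nabla v_i(t,x)$ depend continuously on $(t,x)$. Since geometric independence of $k'$ vectors in $\R^{n+1}$ and linear independence of $k'-1$ vectors in $\R^n$ are both open conditions (each amounts to the non-vanishing of a maximal minor of an associated matrix), once (1) and (2) are verified at the single point $(t_0,x_0)$ they automatically persist on some open neighborhood $N_1\subset N$. Now (1) at $(t_0,x_0)$ is literally assumption (ii), so the whole burden lies on establishing (2) at $(t_0,x_0)$, i.e.\ the affine independence in $\R^n$ of the spatial gradients $p_i:=\nabla v_i(t_0,x_0)$, $i=1,\ldots,k'$.

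The heart of the argument will exploit (iii) in a decisive way. Lemma~\ref{lem:fundamental structure}(b) asserts $(\bar q(t_0,x_0),\bar p(t_0,x_0))\in\mbox{\rm epf}\,(D^{\#}u(t_0,x_0),(1,\bar v(t_0,x_0)))$, which means the affine functional $(q,p)\mapsto q+\bar v(t_0,x_0)\cdot p$ attains its minimum over the simplex $D^{\#}u(t_0,x_0)$ at $(\bar q(t_0,x_0),\bar p(t_0,x_0))$. By (iii) this minimizer lies in the relative interior of that simplex, and an affine functional minimized at a relative-interior point of a simplex must be constant on the whole simplex. Evaluating at the $k'$ vertices $Dv_i(t_0,x_0)=\bigl(\tfrac{\partial v_i}{\partial t}(t_0,x_0),p_i\bigr)$ therefore gives
\begin{equation*}
\frac{\partial v_i}{\partial t}(t_0,x_0)+\bar v(t_0,x_0)\cdot p_i=\frac{\partial v_j}{\partial t}(t_0,x_0)+\bar v(t_0,x_0)\cdot p_j,\qquad 1\leqslant i,j\leqslant k'.
\end{equation*}

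Next I will use (i) together with the Hamilton--Jacobi equation satisfied by each $v_i$ on $N$ to eliminate the time derivatives. Since $v_i(t_0,x_0)=u(t_0,x_0)$ for every $i$, one has $\tfrac{\partial v_i}{\partial t}(t_0,x_0)=-H(t_0,x_0,p_i,u(t_0,x_0))$, so the previous identity collapses to an affine relation purely in the $p_i$:
\begin{equation*}
H(t_0,x_0,p_i,u(t_0,x_0))=\bar v(t_0,x_0)\cdot p_i+\kappa,\qquad i=1,\ldots,k',
\end{equation*}
for a single constant $\kappa$. To finish, I will argue (2) by contradiction. Suppose there exist scalars $c_1,\ldots,c_{k'}$, not all zero, with $\sum_i c_i=0$ and $\sum_i c_i p_i=0$. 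Multiplying the displayed affine identity by $c_i$ and summing gives $\sum_i c_i H(t_0,x_0,p_i,u(t_0,x_0))=\bar v(t_0,x_0)\cdot\sum_i c_i p_i+\kappa\sum_i c_i=0$, hence
\begin{equation*}
\sum_{i=1}^{k'}c_iDv_i(t_0,x_0)=\Bigl(-\sum_i c_i H(t_0,x_0,p_i,u(t_0,x_0)),\,\sum_i c_i p_i\Bigr)=(0,0),
\end{equation*}
contradicting the geometric independence in (ii).

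The truly delicate point, rather than any routine calculation, is the following conceptual upgrade: assumption (ii) by itself leaves room for the spatial projections $p_i$ to be affinely dependent, because the nonlinear graph $p\mapsto(-H(t_0,x_0,p,u(t_0,x_0)),p)$ can lift an affinely dependent base set in $\R^n$ to an affinely independent set in $\R^{n+1}$. The role of (iii), through the exposed face identity of Lemma~\ref{lem:fundamental structure}(b), is precisely to pin the $p_i$ to the graph of an \emph{affine} function of $p$, and it is this extra affineness that kills every nontrivial linear relation among the $p_i-p_{k'}$ and upgrades (1) to (2).
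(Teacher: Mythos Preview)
Your proof is correct and follows essentially the same route as the paper: both use assumption (iii) together with Lemma~\ref{lem:fundamental structure}(b) to force the affine functional $(q,p)\mapsto q+\bar v(t_0,x_0)\cdot p$ to be constant on the vertices $Dv_i(t_0,x_0)$, and then obtain a contradiction with (ii) from any hypothetical affine dependence among the spatial gradients $p_i$. Your passage through the Hamilton--Jacobi identity $\partial_t v_i=-H(t_0,x_0,p_i,u)$ is correct but unnecessary, since the constancy relation $\partial_t v_i+\bar v\cdot p_i=\text{const}$ alone already yields $\sum_i c_i\,\partial_t v_i=0$ once $\sum_i c_i=0$ and $\sum_i c_i p_i=0$.
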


\begin{proof}
Since $\{Dv_i(t_0,x_0)\}_{i=1}^{k'}$ are geometrically independent, we have
\begin{equation}\label{pfeq:linear idp 1}
\mbox{\rm rank}\,(Dv_1(t_0,x_0)-Dv_{k'}(t_0,x_0),\cdots,Dv_{k'-1}(t_0,x_0)-Dv_{k'}(t_0,x_0))=k'-1.
\end{equation}
If $\mbox{\rm rank}\,(\nabla v_1(t_0,x_0)-\nabla v_{k'}(t_0,x_0),\cdots,\nabla v_{k'-1}(t_0,x_0)-\nabla v_{k'}(t_0,x_0))<k'-1$, then there exists $\alpha_i\in\R,\ 1\leqslant i\leqslant k'-1$ such that the $\alpha_i$'s are not all $0$ and
\begin{equation}\label{pfeq:linear idp 2}
	\sum_{i=1}^{k'-1}\alpha_i(\nabla v_i(t_0,x_0)-\nabla v_{k'}(t_0,x_0))=0.
\end{equation}
By \eqref{pfeq:linear idp 1} and \eqref{pfeq:linear idp 2}, we obtain
\begin{equation}\label{pfeq:linear idp 3}
	\sum_{i=1}^{k'-1}\alpha_i(\frac{\partial}{\partial t}v_i(t_0,x_0)-\frac{\partial}{\partial t}v_{k'}(t_0,x_0))\neq 0.
\end{equation}
On the other hand, due to Lemma \ref{lem:fundamental structure} we obtain
\begin{align*}
	\mbox{\rm epf}\,(D^{\#}u(t_0,x_0),(1,\bar{v}(t_0,x_0)))=D^{\#}u(t_0,x_0).
\end{align*}
It follows that for all $1\leqslant i,j\leqslant k'$,
\begin{equation}\label{pfeq:linear idp 4}
	\frac{\partial}{\partial t}v_i(t_0,x_0)+\bar{v}(t_0,x_0)\cdot\nabla v_i(t_0,x_0)=\frac{\partial}{\partial t}v_{j}(t_0,x_0)+\bar{v}(t_0,x_0)\cdot\nabla v_{j}(t_0,x_0).
\end{equation}
Combing \eqref{pfeq:linear idp 2} with \eqref{pfeq:linear idp 4}, we have
\begin{align*}
	&\,\sum_{i=1}^{k'-1}\alpha_i(\frac{\partial}{\partial t}v_i(t_0,x_0)-\frac{\partial}{\partial t}v_{k'}(t_0,x_0))\\
   =&\,\sum_{i=1}^{k'-1}\alpha_i(\frac{\partial}{\partial t}v_i(t_0,x_0)-\frac{\partial}{\partial t}v_{k'}(t_0,x_0))+\bar{v}(t_0,x_0)\sum_{i=1}^{k'-1}\alpha_i(\nabla v_i(t_0,x_0)-\nabla v_{k'}(t_0,x_0))\\
   =&\,\sum_{i=1}^{k'-1}\alpha_i(\frac{\partial}{\partial t}v_i(t_0,x_0)+\bar{v}(t_0,x_0)\cdot\nabla v_i(t_0,x_0)-\frac{\partial}{\partial t}v_{k'}(t_0,x_0)-\bar{v}(t_0,x_0)\cdot\nabla v_{k'}(t_0,x_0))\\
   =&\,0,
\end{align*}
which leads to a contradiction to \eqref{pfeq:linear idp 3}. Therefore,
\begin{align*}
	\mbox{\rm rank}\,(\nabla v_1(t_0,x_0)-\nabla v_{k'}(t_0,x_0),\cdots,\nabla v_{k'-1}(t_0,x_0)-\nabla v_{k'}(t_0,x_0))=k'-1.
\end{align*}
Noticing that the $v_i$'s are of class $C^{R+1}$ in $N$, there exists a neighborhood $N_1\subset N$ of $(t_0,x_0)$ such that (1) and (2) hold, by implicit function theorem.
\end{proof}

Now we define a convex set $\Delta\subset\R^{k'-1}$ in the form
\begin{align*}
\Delta=\{\lambda=(\lambda_1,\cdots,\lambda_{k'-1})\in \R^{k'-1}:0\leqslant\lambda_i\leqslant 1,\ i=1,\cdots,k'-1,\ 0\leqslant 1-\sum_{i=1}^{k'-1}\lambda_i\leqslant 1\}.
\end{align*}
Then for any $(t,x)\in N_1$, by Lemma \ref{lem:linear idp}, the map
\begin{align*}
\Phi_{t,x}:\Delta & \to D^{\#}u(t,x)\\
\lambda=(\lambda_1,\cdots,\lambda_{k'-1}) & \mapsto \Phi_{t,x}(\lambda)=(\bar{q}(t,x,\lambda),\bar{p}(t,x,\lambda)).
\end{align*}
is a linear isomorphism, where
\begin{align*}
\bar{q}(t,x,\lambda)=\sum_{i=1}^{k'-1}\lambda_{i}(\frac{\partial}{\partial t}v_i(t,x)-\frac{\partial}{\partial t}v_{k'}(t,x))+\frac{\partial}{\partial t}v_{k'}(t,x),\\
\bar{p}(t,x,\lambda)=\sum_{i=1}^{k'-1}\lambda_{i}(\nabla v_i(t,x)-\nabla v_{k'}(t,x))+\nabla v_{k'}(t,x).
\end{align*}
We define
\begin{align*}
	E(t,x,\lambda)=\bar{q}(t,x,\lambda)+H(t,x,\bar{p}(t,x,\lambda),v_{k'}(t,x)),\qquad (t,x,\lambda)\in N_1\times\Delta.
\end{align*}
Obviously, $\bar{q},\ \bar{p}$ and $E$ are of class $C^R$ in $N_1\times\Delta$ and $E$ strictly convex with variable $\lambda$.

\begin{Lem}\label{lem:minimal H 2}
Suppose $(t,x)\in N_1$, then
\begin{enumerate}[\rm (a)]
  \item There exists a unique $\bar{\lambda}(t,x)\in \Delta$ such that
  \begin{align*}
E(t,x,\bar{\lambda}(t,x))\leqslant E(t,x,\lambda),\qquad \forall \lambda\in\Delta.
  \end{align*}
and we have $\Phi_{t,x}(\bar{\lambda}(t,x))=(\bar{q}(t,x),\bar{p}(t,x))$.
  \item $\bar{\lambda}(t,x)\in\mbox{\rm int}\,(\Delta)$\footnote{We denote by $\mbox{\rm int}\,(C)$ the interior of a set $C$.} if and only if $(\bar{q}(t,x),\bar{p}(t,x))\in\mbox{\rm ri}\,(D^{\#}u(t,x))$.
  \item If $\lambda\in\Delta$ and $D_{\lambda}E(t,x,\lambda)=0$, then we have $\lambda=\bar{\lambda}(t,x)$. If $\bar{\lambda}(t,x)\in\mbox{\rm int}\,(\Delta)$, then we have $D_{\lambda}E(t,x,\lambda(t,x))=0$.
\end{enumerate}
\end{Lem}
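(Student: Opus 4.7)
The plan is to reduce all three statements to two basic facts: the \emph{global} strict convexity of $E(t,x,\cdot)$ on $\R^{k'-1}$, and that $\Phi_{t,x}$ is an affine isomorphism from $\Delta$ onto $D^{\#}u(t,x)$. First I would establish strict convexity. Writing $\bar p(t,x,\lambda)=\nabla v_{k'}(t,x)+\sum_{i=1}^{k'-1}\lambda_i(\nabla v_i(t,x)-\nabla v_{k'}(t,x))$, Lemma \ref{lem:linear idp}(2) tells us that on $N_1$ the linear part of the map $\lambda\mapsto\bar p(t,x,\lambda)$ is injective. Since $\bar q(t,x,\lambda)$ is affine in $\lambda$ and $H(t,x,\cdot,v_{k'}(t,x))$ is strictly convex in the $p$-variable by assumption (L1), it follows that $E(t,x,\cdot)$ is strictly convex on the whole of $\R^{k'-1}$.

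For part (a), compactness and convexity of $\Delta$, together with continuity and strict convexity of $E(t,x,\cdot)$, furnish a unique minimizer $\bar\lambda(t,x)\in\Delta$. The affine isomorphism $\Phi_{t,x}$ transports this minimization to that of $(q,p)\mapsto q+H(t,x,p,v_{k'}(t,x))$ over $D^{\#}u(t,x)$, whose unique solution is $(\bar q(t,x),\bar p(t,x))$ by Lemma \ref{lem:fundamental structure}(a); therefore $\Phi_{t,x}(\bar\lambda(t,x))=(\bar q(t,x),\bar p(t,x))$. Part (b) is then immediate once we recall that an affine bijection between two convex sets maps the topological interior of the domain bijectively onto the relative interior of the image, applied to $\Phi_{t,x}:\Delta\to D^{\#}u(t,x)$.

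For part (c), the first assertion uses that a $C^1$ strictly convex function on $\R^{k'-1}$ admits at most one critical point, and any such critical point is automatically its global minimum: any $\lambda\in\Delta$ with $D_\lambda E(t,x,\lambda)=0$ is therefore the global minimizer of $E(t,x,\cdot)$ on $\R^{k'-1}$, hence a fortiori on $\Delta$, so $\lambda=\bar\lambda(t,x)$. The second assertion is the standard first-order necessary condition: when $\bar\lambda(t,x)$ lies in the open set $\mbox{\rm int}\,(\Delta)$, the minimum is interior, whence $D_\lambda E(t,x,\bar\lambda(t,x))=0$. The only delicate point in the whole argument is securing \emph{global} strict convexity of $E$ (rather than mere convexity on $\Delta$), which rests essentially on the linear independence supplied by Lemma \ref{lem:linear idp}; thereafter the proof is routine convex analysis.
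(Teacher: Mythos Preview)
Your proof is correct and follows essentially the same route as the paper. The paper's proof is a two-line remark that (a) and (b) follow from Lemma~\ref{lem:fundamental structure}(a) together with the fact that $\Phi_{t,x}$ is a linear isomorphism, and that (c) is trivial by the $C^R$-regularity and (strict) convexity of $E$ in $\lambda$; you have simply unpacked these assertions, in particular spelling out why strict convexity of $E(t,x,\cdot)$ holds via Lemma~\ref{lem:linear idp}(2), which the paper states without argument just before the lemma.
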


\begin{proof}
By Lemma \ref{lem:fundamental structure} (a) and the linear isomorphism $\Phi_{t,x}$, we obtain (a) and (b) directly. The consequence (c) is trivial since $E$ is of class $C^R$ and convex with respect to $\lambda$.
\end{proof}

\begin{Lem}\label{lem:vector C R}
There exists a neighborhood $N_2\subset N_1$ of $(t_0,x_0)$ such that:
\begin{enumerate}[\rm (1)]
  \item The function $(t,x)\mapsto\bar{\lambda}(t,x)$ is of class $C^{R}$ in $N_2$, and $\bar{\lambda}(t,x)\in\mbox{\rm int}\,(\Delta)$ for all $(t,x)\in N_2$.
  \item $\bar{q}(t,x),\ \bar{p}(t,x)$ and $\bar{v}(t,x)$ are of class $C^{R}$ in $N_2$, and $(\bar{q}(t,x),\bar{p}(t,x))\in\mbox{\rm ri}\,(D^{\#}u(t,x))$ for all $(t,x)\in N_2$.
  \item For all $(t,x)\in N_2$, $\mbox{\rm epf}\,(D^{\#}u(t,x),(1,\bar{v}(t,x)))=D^{\#}u(t,x)$.
\end{enumerate}
\end{Lem}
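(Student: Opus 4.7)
The plan is to apply the implicit function theorem to the critical-point equation $D_{\lambda}E(t,x,\lambda)=0$ at the base point $(t_0,x_0,\bar{\lambda}(t_0,x_0))$, and then transfer regularity to $\bar{q},\bar{p},\bar{v}$ by composition. Hypothesis (iii) of Theorem~\ref{pro:fundamental structure} together with Lemma~\ref{lem:minimal H 2}(b) gives $\bar{\lambda}(t_0,x_0)\in\mbox{\rm int}\,(\Delta)$, whence Lemma~\ref{lem:minimal H 2}(c) yields $D_{\lambda}E(t_0,x_0,\bar{\lambda}(t_0,x_0))=0$. Since the $v_i$'s are of class $C^{R+1}$ and $H$ inherits the regularity of $L$, while $\bar{q}$ and $\bar{p}$ are affine in $\lambda$ with coefficients that are first-order derivatives of the $v_i$'s, the map $G(t,x,\lambda):=D_{\lambda}E(t,x,\lambda)$ is of class $C^{R}$ on $N_1\times\Delta$; no regularity is lost when differentiating in $\lambda$ because that dependence is affine. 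Writing $A(t,x)$ for the $n\times(k'-1)$ matrix with columns $\nabla v_i(t,x)-\nabla v_{k'}(t,x)$, one has
\begin{equation*}
D_\lambda G(t,x,\lambda)=D^2_\lambda E(t,x,\lambda)=A(t,x)^{T}H_{pp}\bigl(t,x,\bar{p}(t,x,\lambda),v_{k'}(t,x)\bigr)A(t,x);
\end{equation*}
Lemma~\ref{lem:linear idp}(2) ensures $A(t_0,x_0)$ has full column rank, and the Tonelli-type strict convexity of $H$ in $p$ gives $H_{pp}>0$, so $D_\lambda G(t_0,x_0,\bar{\lambda}(t_0,x_0))$ is symmetric positive definite.

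The implicit function theorem then furnishes a neighborhood $N_2\subset N_1$ of $(t_0,x_0)$, a neighborhood $V\subset\mbox{\rm int}\,(\Delta)$ of $\bar{\lambda}(t_0,x_0)$, and a unique $C^R$ map $\lambda^{*}:N_2\to V$ solving $G(t,x,\lambda^{*}(t,x))=0$. By Lemma~\ref{lem:minimal H 2}(c) this local solution must coincide with the global minimizer, that is $\lambda^{*}(t,x)=\bar{\lambda}(t,x)$ for every $(t,x)\in N_2$, which proves (1). Assertion (2) then follows by composition: $\bar{q}(t,x)=\bar{q}(t,x,\bar{\lambda}(t,x))$ and $\bar{p}(t,x)=\bar{p}(t,x,\bar{\lambda}(t,x))$ are $C^R$ on $N_2$, so is $\bar{v}(t,x)=H_{p}(t,x,\bar{p}(t,x),v_{k'}(t,x))$, and $(\bar{q}(t,x),\bar{p}(t,x))\in\mbox{\rm ri}\,(D^{\#}u(t,x))$ is just the transport of $\bar{\lambda}(t,x)\in\mbox{\rm int}\,(\Delta)$ through the linear isomorphism $\Phi_{t,x}$ together with Lemma~\ref{lem:minimal H 2}(b).

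For (3), I would simply read off the content of the critical-point equation. One has
\begin{equation*}
\partial_{\lambda_i}E(t,x,\lambda)=\bigl(\partial_tv_i-\partial_tv_{k'}\bigr)(t,x)+H_p\bigl(t,x,\bar{p}(t,x,\lambda),v_{k'}(t,x)\bigr)\cdot\bigl(\nabla v_i-\nabla v_{k'}\bigr)(t,x),
\end{equation*}
and evaluating at $\lambda=\bar{\lambda}(t,x)$, where $H_p=\bar{v}(t,x)$, its vanishing for each $i=1,\ldots,k'-1$ reads
\begin{equation*}
\partial_{t}v_i(t,x)+\bar{v}(t,x)\cdot\nabla v_i(t,x)=\partial_{t}v_{k'}(t,x)+\bar{v}(t,x)\cdot\nabla v_{k'}(t,x).
\end{equation*}
Thus the affine functional $(q,p)\mapsto q+\langle\bar{v}(t,x),p\rangle$ takes a common value on every vertex $Dv_i(t,x)$ of $D^{\#}u(t,x)$, hence is constant on the whole convex hull, which by the definition of exposed face is precisely $\mbox{\rm epf}\,(D^{\#}u(t,x),(1,\bar{v}(t,x)))=D^{\#}u(t,x)$.

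The one delicate point in this scheme is the regularity bookkeeping that certifies $G\in C^R$ rather than merely $C^{R-1}$ (otherwise the IFT would only give a $C^{R-1}$ implicit function); this is where it matters that the $\lambda$-dependence of $E$ enters exclusively through the affine maps $\bar{q},\bar{p}$, so that passing to $D_\lambda E$ does not cost a derivative in the $(t,x)$-variables. Everything else is a direct consequence of Tonelli convexity, the linear isomorphism $\Phi_{t,x}$, and the first-order optimality condition provided by Lemma~\ref{lem:minimal H 2}.
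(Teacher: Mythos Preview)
Your proof is correct and follows essentially the same route as the paper: use hypothesis (iii) together with Lemma~\ref{lem:minimal H 2}(b),(c) to get $D_\lambda E(t_0,x_0,\bar{\lambda}(t_0,x_0))=0$, compute $D^2_{\lambda}E=(\nabla V)^T H_{pp}(\nabla V)$ and use Lemma~\ref{lem:linear idp}(2) plus strict convexity of $H$ to see it is non-degenerate, apply the implicit function theorem, and identify the implicit solution with $\bar{\lambda}$ via Lemma~\ref{lem:minimal H 2}(c). The only noticeable difference is in part~(3): the paper deduces it from Lemma~\ref{lem:fundamental structure}(b) combined with the already-established fact $(\bar{q},\bar{p})\in\mbox{\rm ri}\,(D^{\#}u)$ (a face containing a relative interior point is the whole set), whereas you read it off directly from the vanishing of $\partial_{\lambda_i}E$ at $\bar\lambda(t,x)$; these are equivalent observations. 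Your extra paragraph on why $G=D_\lambda E$ is $C^R$ rather than $C^{R-1}$ is a genuinely useful clarification that the paper glosses over.
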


\begin{proof}
Since $(\bar{q}(t_0,x_0),\bar{p}(t_0,x_0))\in\mbox{\rm ri}\,(D^{\#}u(t_0,x_0))$, we have $\bar{\lambda}(t_0,x_0)\in\mbox{\rm int}\,(\Delta)$ by Lemma \ref{lem:minimal H 2} (b). According to Lemma \ref{lem:minimal H 2} (c), there holds
\begin{align*}
D_{\lambda}E(t_0,x_0,\bar{\lambda}(t_0,x_0))=0.
\end{align*}
Differentiating $E(t,x,\lambda)$ by $\lambda$, we obtain
\begin{equation*}
	\begin{split}
		D_{\lambda}E(t,x,\lambda)&=V_t(t,x)+H_p(t,x,\bar{p}(t,x,\lambda),v_{k'}(t,x))\nabla V(t,x),\\
		D^{2}_{\lambda\lambda}E(t,x,\lambda)&=(\nabla V(t,x))^{T}H_{pp}(t,x,\bar{p}(t,x,\lambda),v_{k'}(t,x))\nabla V(t,x),
	\end{split}
	\qquad (t,x,\lambda)\in N_1 \times \Delta.
\end{equation*}
where
\begin{equation*}
	\begin{split}
		V_t(t,x)&=(\frac{\partial}{\partial t}v_1(t,x)-\frac{\partial}{\partial t}v_{k'}(t,x),\cdots,\frac{\partial}{\partial t}v_{k'-1}(t,x)-\frac{\partial}{\partial t}v_{k'}(t,x))\\
		\nabla V(t,x)&=(\nabla v_1(t,x)-\nabla v_{k'}(t,x),\cdots,\nabla v_{k'-1}(t,x)-\nabla v_{k'}(t,x)),
	\end{split}
	\qquad (t,x,\lambda)\in N_1 \times \Delta.
\end{equation*}
Notice that $H_{pp}$ is positive definite and Lemma \ref{lem:linear idp} (2) implies $\mbox{\rm rank}\,(\nabla V(t,x))=k'-1$. Therefore, $D^{2}_{\lambda\lambda}E(t,x,\lambda)$ is non-degenerate for all $(t,x,\lambda)\in N_1 \times \Delta$. Now, by the implicit function theorem, there exists a neighborhood $N_2\subset N_1$ of $(t_0,x_0)$ and a $C^R$ function $\tilde{\lambda}:N_2\to \mbox{\rm int}\,(\Delta)$ such that $\tilde{\lambda}(t_0,x_0)=\bar{\lambda}(t_0,x_0)$ and
\begin{align*}
D_{\lambda}E(t,x,\tilde{\lambda}(t,x))=0,\qquad \forall (t,x)\in N_2.
\end{align*}
Combing this with Lemma \ref{lem:minimal H 2} (b), we have $\tilde{\lambda}(t,x)=\bar{\lambda}(t,x)$ for all $(t,x)\in N_2$, which leads to our conclusion (1). Conclusion (2) follows from Lemma \ref{lem:minimal H 2} (a) (b) and Lemma \ref{lem:vector C R} (1), and conclusion (3) Proposition \ref{lem:fundamental structure} (b) and Lemma \ref{lem:vector C R} (2).
\end{proof}

\begin{proof}[Proof of Theorem \ref{pro:fundamental structure}]
Now consider \eqref{eq:strict chara 0}. By Lemma \ref{lem:vector C R} (2) and the Cauchy-Lipschitz theorem, there exists $\delta>0$ such that \eqref{eq:strict chara 0} has a unique $C^{R+1}$ solution $x:[t_0-\delta,t_0+\delta]\to\R^n$ and $(t,x(t))\in N_2$ for all $t\in[t_0-\delta,t_0+\delta]$. For $1\leqslant i\leqslant k'$, Lemma \ref{lem:vector C R} (3) implies
\begin{align*}
\langle Dv_i(t,x(t)),(1,\bar{v}(t,x(t))) \rangle=\langle Dv_{k'}(t,x(t)),(1,\bar{v}(t,x(t))) \rangle,\qquad \forall t\in[t_0-\delta,t_0+\delta].
\end{align*}
Then we have that for any $t\in[t_0,t_0+\delta]$,
\begin{align*}
v_i(t,x(t))&=v_i(t_0,x_0)+\int_{t_0}^{t} \langle Dv_i(s,x(s)),(1,\dot{x}(s)) \rangle ds\\
           &=v_i(t_0,x_0)+\int_{t_0}^{t} \langle Dv_i(s,x(s)),(1,\bar{v}(s,x(s))) \rangle ds\\
           &=v_{k'}(t_0,x_0)+\int_{t_0}^{t} \langle Dv_1(s,x(s)),(1,\bar{v}(s,x(s))) \rangle ds\\
           &=v_{k'}(t,x(t)).
\end{align*}
Similarly, there holds
\begin{align*}
v_i(t,x(t))=v_{k'}(t,x(t)),\qquad \forall t\in[t_0-\delta,t_0].
\end{align*}
This completes the proof of (1) in Theorem \ref{pro:fundamental structure}. Conclusion (2), (3) and (4) are direct consequences of Lemma \ref{lem:linear idp} (1), Lemma \ref{lem:vector C R} (2) and Lemma \ref{lem:vector C R} (3).
\end{proof}

\subsection{Smooth strict singular characteristics from non-conjugate singular points}

Fix $(t_0,x_0)\in\Sigma\setminus\Gamma$ such that \eqref{eq:non_conj1} holds.  Without loss of generality, let
\begin{align*}
D^{\#}u(t_0,x_0)=\mbox{\rm co}\,\{Dv_i(t_0,x_0)\}_{i=1}^{k'}=\mbox{\rm epf}\,(D^{+}u(t_0,x_0),(1,v(t_0,x_0))),
\end{align*}
where $v(t_0,x_0)=\bar{v}(t_0,x_0)$ is defined in Lemma \ref{lem:fundamental structure} (b) in the case that $D^{\#}u(t_0,x_0)=D^{+}u(t_0,x_0)$. That is, $v(t_0,x_0)=H_p(t_0,x_0,p(t_0,x_0),u(t_0,x_0))$ where $(q(t_0,x_0),p(t_0,x_0))\in D^{+}u(t_0,x_0)$ is the unique minimal energy element of the function $(q,p)\mapsto q+H(t_0,x_0,p,u(t_0,x_0))$, and $D^{\#}u(t_0,x_0)$ is the exposed face of $D^{+}u(t_0,x_0)$ containing such a minimal energy element. 

\begin{Rem}
When considering propagation of singularities for stationary equations such as
\begin{equation}\label{eq:HJs}\tag{HJ$_s$}
H(x,Du(x),u(x))=0,\ x\in \R^{n},
\end{equation}
we have to exclude the case $0\in \mbox{\rm co}\, H_{p}(x,D^{+}u(x),u(x))$. But for evolutionary equations \eqref{eq:HJe} here, it will not happen because the first variable of $(1,v(t,x))$ is always not $0$.
\end{Rem}

\begin{defn}
We call $(t_0,x_0)\in\Sigma\setminus\Gamma$ is \emph{non-degenerate} if
\begin{enumerate}[(i)]
	\item The vectors in $\{Dv_i(t_0,x_0)\}_{i=1}^{k'}$ are geometrically independent.
	\item $(q(t_0,x_0),p(t_0,x_0))\in\mbox{\rm ri}\,(D^{\#}(t_0,x_0))$.
\end{enumerate}
\end{defn}

\begin{The}\label{thm:local propagation positive}
Suppose $(t_0,x_0)\in\Sigma\setminus\Gamma$ is non-degenerate. Then there exists $\delta_0>0$ such that
\begin{equation}\label{eq:strict chara positive}
\begin{cases}
&\dot{x}(t)=v(t,x(t)),\qquad t\in[t_0,t_0+\delta_0],\\
&x(t_0)=x_0,
\end{cases}
\end{equation}
has a unique $C^{R+1}$ solution $x:[t_0,t_0+\delta_0]\to \R^n$ and
\begin{align*}
(t,x(t))\in\Sigma\setminus\Gamma,\qquad \forall t\in[t_0,t_0+\delta_0].
\end{align*}
\end{The}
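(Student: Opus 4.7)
The plan is to reduce Theorem \ref{thm:local propagation positive} to Theorem \ref{pro:fundamental structure} and then upgrade the conclusion so that the resulting curve actually stays inside the singular set $\Sigma\setminus\Gamma$. First, I would verify that the three hypotheses (i)-(iii) of Theorem \ref{pro:fundamental structure} hold at $(t_0,x_0)$. Clauses (ii) and (iii) are exactly the two parts of the definition of non-degeneracy. For clause (i), namely $v_1(t_0,x_0)=\cdots=v_{k'}(t_0,x_0)$, I would invoke Proposition \ref{irregu non-conj pro}(3): after the reindexing that places the first $k'$ functions $v_i$ on the exposed face $D^{\#}u(t_0,x_0)\subset D^+u(t_0,x_0)$, each $Dv_i(t_0,x_0)$ for $1\leqslant i\leqslant k'$ lies in $D^*u(t_0,x_0)$, and the formula in Proposition \ref{irregu non-conj pro}(3) forces $v_i(t_0,x_0)=u(t_0,x_0)$ for all such $i$. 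Theorem \ref{pro:fundamental structure} then yields $\delta>0$ and a unique $C^{R+1}$ solution $x:[t_0-\delta,t_0+\delta]\to\R^n$ of \eqref{eq:strict chara positive}. I would shrink to $\delta_0\in(0,\delta]$ so that $(t,x(t))$ remains in the neighborhood $N$ of Proposition \ref{irregu non-conj pro} for every $t\in[t_0,t_0+\delta_0]$; since $N\cap\Gamma=\varnothing$, the inclusion $(t,x(t))\notin\Gamma$ is then immediate.

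The main task is to show $(t,x(t))\in\Sigma$. Theorem \ref{pro:fundamental structure}(1) already gives $v_1(t,x(t))=\cdots=v_{k'}(t,x(t))$ along the curve. To identify this common value with $u(t,x(t))$, I compare with the remaining indices $j>k'$, for which $Dv_j(t_0,x_0)\in D^+u(t_0,x_0)\setminus D^{\#}u(t_0,x_0)$. The exposed-face characterization then provides the strict inequality
\begin{align*}
\langle Dv_j(t_0,x_0),(1,v(t_0,x_0))\rangle>\langle Dv_1(t_0,x_0),(1,v(t_0,x_0))\rangle.
\end{align*}
Combined with $\dot{x}(t_0)=v(t_0,x_0)$, $v_j(t_0,x_0)=v_1(t_0,x_0)=u(t_0,x_0)$, and the $C^{R+1}$ regularity of each $v_j$ and of $x$, a chain-rule-plus-Taylor argument will yield $v_j(t,x(t))>v_1(t,x(t))$ for all $t\in(t_0,t_0+\delta_0]$, after a further shrink of $\delta_0$. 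Hence $u(t,x(t))=v_1(t,x(t))=\cdots=v_{k'}(t,x(t))$, and the formula in Proposition \ref{irregu non-conj pro}(3) applied at $(t,x(t))\in N$ becomes $D^*u(t,x(t))=\{Dv_i(t,x(t))\}_{i=1}^{k'}$. By Theorem \ref{pro:fundamental structure}(2) these $k'$ gradients are geometrically independent, hence mutually distinct, so $\#D^*u(t,x(t))=k'\geqslant 2$ and $u$ is non-differentiable at $(t,x(t))$. The case $t=t_0$ is given by hypothesis.

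The step I expect to require the most care is the strict Taylor comparison between $v_j$ and $v_1$ for $j>k'$: one must rule out the degenerate possibility that the first-order growth of every $v_i$ along $x(t)$ coincides at $t_0$. This is precisely what the exposed-face hypothesis supplies, because the strict inequality of directional derivatives in the direction $(1,v(t_0,x_0))$---the velocity of the curve $(t,x(t))$ at $t_0$---propagates, via the $C^{R+1}$ regularity of $v_j$ and $x$, to a pointwise strict inequality on a small interval $(t_0,t_0+\delta_0]$.
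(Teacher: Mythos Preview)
Your existence argument mirrors the paper's closely, but two points need attention.

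First, a small ordering issue: Theorem \ref{pro:fundamental structure} solves \eqref{eq:strict chara 0}, whose right-hand side is $\bar{v}$, the velocity built from the minimal energy element on $D^{\#}u$. Equation \eqref{eq:strict chara positive} uses $v$, built from the minimal energy element on the full $D^{+}u$. These coincide at $t_0$ by construction of $D^{\#}u(t_0,x_0)$, but for $t>t_0$ you must first establish $D^{+}u(t,x(t))=D^{\#}u(t,x(t))$ (which your Taylor comparison does) and only then conclude $v(t,x(t))=\bar{v}(t,x(t))$, so that the curve produced by Theorem \ref{pro:fundamental structure} is in fact a solution of \eqref{eq:strict chara positive}. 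Your write-up inverts this, asserting at the outset that Theorem \ref{pro:fundamental structure} solves \eqref{eq:strict chara positive}.

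Second, and more seriously, you do not address uniqueness. The vector field $v$ is defined through an $\arg\min$ over $D^{+}u$, and away from the specific curve $x(\cdot)$ you have not shown it is even continuous, let alone Lipschitz; so Cauchy--Lipschitz is unavailable directly, and the uniqueness of the solution of \eqref{eq:strict chara 0} does not transfer automatically. The paper spends the entire second half of its proof on this: given any Lipschitz solution $\gamma$ of \eqref{eq:strict chara positive} with right-continuous right derivative (the class of strict singular characteristics), it sets $\tau=\sup\{t:\gamma=x \text{ on }[t_0,t]\}$ and argues by contradiction that $\tau=t_0+\delta_0$. The key step is to show that just past $\tau$ one still has $D^{+}u(t,\gamma(t))=D^{\#}u(t,\gamma(t))$ and $(q(t,\gamma(t)),p(t,\gamma(t)))\in\mbox{ri}\,D^{\#}u(t,\gamma(t))$, using right-continuity of $p(\cdot,\gamma(\cdot))$ and the persistence of the exposed-face inequalities; this forces $v(t,\gamma(t))=\bar{v}(t,\gamma(t))$ near $\tau$, so $\gamma$ solves \eqref{eq:strict chara 0} and hence agrees with $x$ past $\tau$, a contradiction. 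Your proposal contains no counterpart to this argument.
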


\begin{proof}
Lemma \ref{lem:fundamental structure} (b) implies $(q(t_0,x_0),p(t_0,x_0))=(\bar{q}(t_0,x_0),\bar{p}(t_0,x_0))$ and $v(t_0,x_0)=\bar{v}(t_0,x_0)$. For non-degenerate $(t_0,x_0)\in\Sigma\setminus\Gamma$, by Theorem \ref{pro:fundamental structure}, there exists $\delta>0$ such that \eqref{eq:strict chara 0} has a unique $C^{R+1}$ solution $x:[t_0-\delta,t_0+\delta]\to\R^n$ and
\begin{align*}
v_1(t,x(t))=\cdots=v_{k'}(t,x(t)),\qquad \forall t\in[t_0-\delta,t_0+\delta].
\end{align*}
Since $D^{\#}u(t_0,x_0)=\mbox{\rm epf}\,(D^{+}u(t_0,x_0),(1,v(t_0,x_0)))$, there exists $\sigma>0$ such that
\begin{align*}
\langle Dv_i(t_0,x_0),(1,\bar{v}(t_0,x_0))\rangle\geqslant\langle Dv_{k'}(t_0,x_0),(1,\bar{v}(t_0,x_0))\rangle+\sigma,\qquad \forall k'+1\leqslant i\leqslant k.
\end{align*}
Obviously, $x$ and the $v_i$'s are of class $C^{R+1}$ and $\bar{v}(\cdot,x(\cdot))=\dot{x}(\cdot)$ is of class $C^R$. Thus, there exists $0<\delta_0\leqslant\delta$ such that
\begin{equation}\label{pfeq:thm from 1}
\langle Dv_i(t,x(t)),(1,\bar{v}(t,x(t)))\rangle\geqslant\langle Dv_{k'}(t,x(t)),(1,\bar{v}(t,x(t)))\rangle+\frac{1}{2}\sigma
\end{equation}
for all $k'+1\leqslant i\leqslant k$ and $t\in[t_0,t_0+\delta_0]$. It follows that
\begin{align*}
v_i(t,x(t))&=v_i(t_0,x_0)+\int_{t_0}^{t} \langle Dv_i(s,x(s)),(1,\dot{x}(s)) \rangle ds\\
           &=v_i(t_0,x_0)+\int_{t_0}^{t} \langle Dv_i(s,x(s)),(1,\bar{v}(s,x(s))) \rangle ds\\
           &\geqslant v_{k'}(t_0,x_0)+\int_{t_0}^{t} \big\{\langle Dv_{k'}(s,x(s)),(1,\bar{v}(s,x(s))) \rangle+\frac{1}{2}\sigma\}ds\\
           &=v_{k'}(t,x(t))+\frac{1}{2}\sigma(t-t_0)\\
           &>v_{k'}(t,x(t))
\end{align*}
for all $k'+1\leqslant i\leqslant k$ and $t\in(t_0,t_0+\delta_0]$. We conclude that for $t\in(t_0,t_0+\delta_0]$,
\begin{equation}\label{pfeq:thm from 2}
u(t,x(t))=\min_{1\leqslant i\leqslant k}v_i(t,x(t))=v_{j_1}(t,x(t))<v_{j_2}(t,x(t)),\ \forall 1\leqslant j_1\leqslant k',k'+1\leqslant j_2\leqslant k
\end{equation}
and $D^{+}u(t,x(t))=D^{\#}u(t,x(t))$. Therefore, when $t\in(t_0,t_0+\delta_0]$, there holds
\begin{equation}\label{pfeq:thm from 3}
	\begin{split}
		(\bar{q}(t,x(t)),\bar{p}(t,x(t)))&=\argmin_{(q,p)\in D^{\#}u(t,x(t))}\{q+H(t,x(t),p,v_{k'}((t,x(t))))\}\\
		&=\argmin_{(q,p)\in D^{+}u(t,x(t))}\{q+H(t,x(t),p,u((t,x(t))))\}\\
		&=(q(t,x(t)),p(t,x(t))).
	\end{split}
\end{equation}
and
\begin{equation}\label{pfeq:thm from 4}
\bar{v}(t,x(t))=H_p(t,x(t),\bar{p}(t,x(t)),v_{k'}(t,x(t)))=H_p(t,x(t),p(t,x(t)),u(t,x(t)))=v(t,x(t)).
\end{equation}
This implies $x:[t_0,t_0+\delta_0]\to \R^n$ is a $C^{R+1}$ solution of \eqref{eq:strict chara positive} and
\begin{align*}
(t,x(t))\in\Sigma\setminus\Gamma,\qquad \forall t\in[t_0,t_0+\delta_0].
\end{align*}

Next, we turn to the uniqueness. Let $\gamma:[t_0,t_0+\delta_0]\to \R^n$ be a Lipschitz solution of \eqref{eq:strict chara positive} such that $\dot{\gamma}^{+}(t)=v(t,\gamma(t))$ and $\dot{\gamma}^{+}(\cdot)$ is right-continuous on $[t_0,t_0+\delta_0]$. Set
\begin{align*}
	\tau=\sup\{t:\gamma(s)=x(s),\ \forall s\in[t_0,t]\}.
\end{align*}
Obviously, we have $t_0\leqslant\tau\leqslant t_0+\delta_0$, $\gamma(\tau)=x(\tau)$ and \eqref{pfeq:thm from 1}, \eqref{pfeq:thm from 2}, \eqref{pfeq:thm from 3}, \eqref{pfeq:thm from 4} implies
\begin{equation}\label{pfeq:thm from 5}
(q(\tau,\gamma(\tau)),p(\tau,\gamma(\tau)))=(\bar{q}(\tau,\gamma(\tau)),\bar{p}(\tau,\gamma(\tau))),\ v(\tau,\gamma(\tau))=\bar{v}(\tau,\gamma(\tau)),
\end{equation}
\begin{equation}\label{pfeq:thm from 6}
v_i(\tau,\gamma(\tau))\geqslant v_1(\tau,\gamma(\tau)),\qquad \forall k'+1\leqslant i\leqslant k,
\end{equation}
\begin{equation}\label{pfeq:thm from 7}
\langle Dv_i(\tau,\gamma(\tau)),(1,\bar{v}(\tau,\gamma(\tau)))\rangle\geqslant\langle Dv_1(\tau,\gamma(\tau)),(1,\bar{v}(\tau,\gamma(\tau)))\rangle+\frac{1}{2}\sigma,\qquad \forall k'+1\leqslant i\leqslant k.
\end{equation}
Now, we claim $\tau=t_0+\delta_0$. Otherwise, we assume $\tau<t_0+\delta_0$. Since $\gamma$ is Lipschitz, there exists $\tau<t'\leqslant t_0+\delta_0$ such that $\gamma(t)\in N$ for all $t\in[\tau,t']$. By \eqref{pfeq:thm from 6}, \eqref{pfeq:thm from 7} and a similar argument as above, there exists $\tau<t''\leqslant t'$ such that
\begin{align*}
v_i(t,\gamma(t))>v_{k'}(t,\gamma(t)),\qquad \forall k'+1\leqslant i\leqslant k,\ t\in(\tau,t''],
\end{align*}
which implies
\begin{equation}\label{pfeq:thm from 8}
(q(t,\gamma(t)),p(t,\gamma(t)))\in D^{+}u(t,\gamma(t))\subset D^{\#}u(t,\gamma(t)),\qquad \forall t\in(\tau,t''].
\end{equation}
It follows from Theorem \ref{pro:fundamental structure} (2) (3) (4) and \eqref{pfeq:thm from 5} that the vectors in $\{Dv_i(\tau,\gamma(\tau))\}_{i=1}^{k'}$ are geometrically independent and
\begin{align*}
&(q(\tau,\gamma(\tau)),p(\tau,\gamma(\tau)))\in\mbox{\rm ri}\,(D^{\#}u(\tau,\gamma(\tau))),\\
&\mbox{\rm epf}\,(D^{\#}u(\tau,\gamma(\tau)),(1,\bar{v}(\tau,\gamma(\tau))))=D^{\#}u(\tau,\gamma(\tau)).
\end{align*}
By Lemma \ref{lem:fundamental structure} (c), there exists $\sigma'>0$ such that
\begin{align*}
|p(\tau,\gamma(\tau))-p|\geqslant\sigma',\qquad \forall (q,p)\in D^{\#}u(\tau,\gamma(\tau))\setminus \mbox{\rm ri}\,(D^{\#}u(\tau,\gamma(\tau))).
\end{align*}
It is easy to see that $p(t,\gamma(t))=L_v(t,\gamma(t),v(t,\gamma(t)),u(t,\gamma(t)))$ and the $Dv_i(t,\gamma(t))$'s are right-continuous. Thus, there exists $t_0<t'''\leqslant t''$ such that for all $t\in[\tau,t''']$, the vectors in $\{Dv_i(t,\gamma(t))\}_{i=1}^{k'}$ are geometrically independent and
\begin{equation}\label{pfeq:thm from 9}
|p(t,\gamma(t))-p|\geqslant\frac{1}{2}\sigma',\qquad \forall (q,p)\in D^{\#}u(t,\gamma(t))\setminus \mbox{\rm ri}\,(D^{\#}u(t,\gamma(t))),
\end{equation}
Combing \eqref{pfeq:thm from 8} with \eqref{pfeq:thm from 9}, it follows that
\begin{equation}\label{pfeq:thm from 10}
(q(t,\gamma(t)),p(t,\gamma(t)))\in\mbox{\rm ri}\,(D^{\#}u(t,\gamma(t))),\qquad \forall t\in[\tau,t'''].
\end{equation}
Invoking \eqref{pfeq:thm from 8}, \eqref{pfeq:thm from 10} and the geometrical independence of $\{Dv_i(t,\gamma(t))\}_{i=1}^{k'}$, we obtain that
\begin{align*}
D^{+}u(t,\gamma(t))=D^{\#}u(t,\gamma(t)),\qquad \forall t\in[\tau,t'''],
\end{align*}
which implies
\begin{align*}
v_1(t,\gamma(t))=\cdots=v_{k'}(t,\gamma(t))=u(t,\gamma(t)),\qquad \forall t\in[\tau,t'''].
\end{align*}
Therefore, we have that for all $t\in(\tau,t''']$,
\begin{align*}
(q(t,\gamma(t)),p(t,\gamma(t)))&=\argmin_{(q,p)\in D^{+}u(t,\gamma(t))}q+H(t,\gamma(t),p,u(t,\gamma(t)))\\
                               &=\argmin_{(q,p)\in D^{\#}u(t,\gamma(t))}q+H(t,\gamma(t),p,v_{k'}(t,x))\\
                               &=(\bar{q}(t,\gamma(t)),\bar{p}(t,\gamma(t))),
\end{align*}
and
\begin{align*}
v(t,\gamma(t))&=H_p(t,\gamma(t),p(t,\gamma(t)),u(t,\gamma(t)))\\
              &=H_p(t,\gamma(t),\bar{p}(t,\gamma(t)),v_{k'}(t,\gamma(t)))\\
              &=\bar{v}(t,\gamma(t)).
\end{align*}
Now, we conclude that $\gamma:[t_0,t''']\to \R^n$ is in fact a Lipschitz solution of \eqref{eq:strict chara 0}. By the Cauchy-Lipschitz theorem, there holds $\gamma(t)=x(t)$ for all $t\in[t_0,t''']$. This leads to a contradiction with $\tau<t'''$. Finally, we have $\tau=t_0+\delta_0$ and $\gamma(t)=x(t)$ for all $t\in[t_0,t_0+\delta_0]$. This completes the proof uniqueness.
\end{proof}

\subsection{Smooth strict singular characteristics to non-conjugate singular points}

In this section, we will deal with the possible exposed face $D^{\#}u(t,x)$ of $D^+u(t,x)$ where the a minimax energy element is located. 

\begin{defn}
Suppose $(t_0,x_0)\in\Sigma\setminus\Gamma$, we call $(q_0,p_0)\in D^{+}u(t_0,x_0)$ is \emph{minimax} at $(t_0,x_0)$ if
	\begin{align*}
		(q_0,p_0)\in \mbox{\rm epf}\,(D^{+}u(t_0,x_0),-(1,H_p(t_0,x_0,p_0,u(t_0,x_0))))\setminus D^{*}u(t_0,x_0).
	\end{align*}
\end{defn}

Without loss of generality, set
\begin{align*}
    D^{\#}u(t_0,x_0)=\mbox{\rm co}\,\{Dv_i(t_0,x_0)\}_{i=1}^{k'}=\mbox{\rm epf}\,(D^{+}u(t_0,x_0),-(1,H_p(t_0,x_0,p_0,u(t_0,x_0)))),
\end{align*}
where $k'\geqslant 2$. Now we have
\begin{Lem}\label{lem:mini-max}
	\begin{align*}
	(q_0,p_0)=(\bar{q}(t_0,x_0),\bar{p}(t_0,x_0)),\ H_p(t_0,x_0,p_0,u(t_0,x_0))=\bar{v}(t_0,x_0)
	\end{align*}
where $(\bar{q},\bar{p})$ and $\bar{v}$ are given in Lemma \ref{lem:fundamental structure}.
\end{Lem}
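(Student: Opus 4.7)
My plan is to show that the minimax element $(q_0,p_0)$ is in fact the unique minimizer characterized in Lemma \ref{lem:fundamental structure}(a), from which both identities follow immediately. The key idea is to combine the linear equality that defines the exposed face $D^{\#}u(t_0,x_0)$ with the strict convexity of $H$ in $p$.

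First, I would unpack the definitions. Set $v_0 = H_p(t_0,x_0,p_0,u(t_0,x_0))$. By the definition of exposed face in the paper, $(q,p) \in D^{\#}u(t_0,x_0) = \mbox{\rm epf}\,(D^+u(t_0,x_0), -(1,v_0))$ means that the affine function $(q,p) \mapsto q + \langle v_0, p\rangle$ is constant on $D^{\#}u(t_0,x_0)$, taking the value $q_0 + \langle v_0,p_0\rangle$. Moreover, since $(t_0,x_0) \in \Sigma \setminus \Gamma$ is handled via Proposition \ref{irregu non-conj pro}, we have $v_1(t_0,x_0) = \cdots = v_{k'}(t_0,x_0) = u(t_0,x_0)$, so $(\bar q,\bar p)$ minimizes $(q,p) \mapsto q + H(t_0,x_0,p,u(t_0,x_0))$ over $D^{\#}u(t_0,x_0)$.

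Next, I would apply the subgradient inequality for $H$ in $p$: for every $(q,p) \in D^{\#}u(t_0,x_0)$,
\begin{align*}
H(t_0,x_0,p,u(t_0,x_0)) \geqslant H(t_0,x_0,p_0,u(t_0,x_0)) + \langle v_0, p - p_0\rangle,
\end{align*}
with equality only at $p = p_0$ by strict convexity. Adding $q$ on both sides and using the linear identity $q + \langle v_0,p\rangle = q_0 + \langle v_0,p_0\rangle$ on $D^{\#}u(t_0,x_0)$, the cross terms cancel and I obtain
\begin{align*}
q + H(t_0,x_0,p,u(t_0,x_0)) \geqslant q_0 + H(t_0,x_0,p_0,u(t_0,x_0)),
\end{align*}
with equality iff $p = p_0$. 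Hence $(q_0,p_0)$ is a (in fact the) minimizer over $D^{\#}u(t_0,x_0)$ of the functional in Lemma \ref{lem:fundamental structure}(a), and by the uniqueness part of that lemma, $(q_0,p_0) = (\bar q(t_0,x_0),\bar p(t_0,x_0))$. The second identity then follows tautologically, since $\bar v(t_0,x_0) = H_p(t_0,x_0,\bar p(t_0,x_0),v_{k'}(t_0,x_0)) = H_p(t_0,x_0,p_0,u(t_0,x_0)) = v_0$.

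The only mildly subtle point, and what I would be most careful about, is the correct bookkeeping of the signs and conventions in the paper's definition of an exposed face (minimization of the linear functional, not maximization): once the face is rewritten as the level set of the linear functional $q + \langle v_0, p\rangle$, the computation reduces to an application of the supporting inequality for the convex function $H(t_0,x_0,\cdot,u(t_0,x_0))$ at $p_0$, and there are no further obstacles.
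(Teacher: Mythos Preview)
Your proof is correct and follows essentially the same approach as the paper: both observe that the linear functional $(q,p)\mapsto q+\langle v_0,p\rangle$, which is the gradient of $f(q,p)=q+H(t_0,x_0,p,u(t_0,x_0))$ at $(q_0,p_0)$, is constant on the exposed face $D^{\#}u(t_0,x_0)$, and then use convexity of $f$ to conclude that $(q_0,p_0)$ minimizes $f$ there. The paper phrases this as ``the projection of $Df(q_0,p_0)$ to the subspace generated by $D^{\#}u(t_0,x_0)$ is $0$'', while you spell out the same content via the subgradient inequality; the arguments are equivalent.
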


\begin{Rem}
This lemma ensure that, if $(t_0,x_0)\in\Sigma\setminus\Gamma$ and $(q_0,p_0)\in D^{+}u(t_0,x_0)$ is minimax, then $(q_0,p_0)$ is the unique minimal energy element restricted on the exposed face $D^{\#}u(t_0,x_0)$.
\end{Rem}

\begin{proof}
Consider the function
\begin{align*}
f:\R^{n+1}\to \R,\qquad (q,p)\mapsto q+H(t_0,x_0,p,u(t_0,x_0)).
\end{align*}
Obviously, $f$ is convex and of class $C^{R+1}$, and
\begin{align*}
Df(q,p)=(1,H_p(t_0,x_0,p,u(t_0,x_0))),\qquad \forall (q,p)\in\R^{n+1}.
\end{align*}
Since $D^{\#}u(t_0,x_0)=\mbox{\rm epf}\,(D^{+}u(t_0,x_0),-Df(q_0,p_0))$, the projection of $Df(q_0,p_0)$ to the subspace generated by $D^{\#}u(t_0,x_0)$ is $0$. By convexity of $f$ and $D^{\#}u(t_0,x_0)$, we have
\begin{align*}
f(q_0,p_0)\leqslant f(q,p),\qquad \forall (q,p)\in D^{\#}u(t_0,x_0),
\end{align*}
which implies $(q_0,p_0)=(\bar{q}(t_0,x_0),\bar{p}(t_0,x_0))$ and $H_p(t_0,x_0,p_0,u(t_0,x_0))=\bar{v}(t_0,x_0)$.
\end{proof}

\begin{The}\label{thm:local propagation negative}
Suppose $(t_0,x_0)\in\Sigma\setminus\Gamma$, $(q_0,p_0)$ is non-degenerate and minimax at $(t_0,x_0)$. Then, there exists $\delta_0>0$ such that
\begin{equation}\label{eq:strict chara negative}
\begin{cases}
&\dot{x}(t)=v(t,x(t)),\qquad \forall t\in[t_0-\delta_0,t_0),\\
&\dot{x}^{-}(t_0)=H_p(t_0,x_0,p_0,u(t_0,x_0)),\\
&x(t_0)=x_0.
\end{cases}
\end{equation}
has a unique $C^{R+1}$ solution $x:[t_0-\delta_0,t_0]\to \R^n$ and
\begin{align*}
(t,x(t))\in\Sigma\setminus\Gamma,\qquad \forall t\in[t_0-\delta_0,t_0].
\end{align*}
\end{The}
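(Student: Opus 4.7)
The plan is to reduce to Theorem \ref{pro:fundamental structure} via Lemma \ref{lem:mini-max} and then extract the backward half of the resulting two-sided smooth curve. By Lemma \ref{lem:mini-max}, the minimax element satisfies $(q_0,p_0)=(\bar{q}(t_0,x_0),\bar{p}(t_0,x_0))$ and $H_p(t_0,x_0,p_0,u(t_0,x_0))=\bar{v}(t_0,x_0)$, and the non-degeneracy hypothesis supplies exactly the conditions (i)--(iii) of Theorem \ref{pro:fundamental structure}; condition (i) is automatic since $v_1(t_0,x_0)=\cdots=v_k(t_0,x_0)=u(t_0,x_0)$ at the irregular point. Applying that theorem produces $\delta>0$ and a unique $C^{R+1}$ curve $x:[t_0-\delta,t_0+\delta]\to\R^n$ solving $\dot{x}(t)=\bar{v}(t,x(t))$ with $x(t_0)=x_0$, along which $v_1(t,x(t))=\cdots=v_{k'}(t,x(t))$, the gradients $\{Dv_i(t,x(t))\}_{i=1}^{k'}$ remain geometrically independent, $(\bar{q}(t,x(t)),\bar{p}(t,x(t)))\in\mbox{\rm ri}\,(D^{\#}u(t,x(t)))$, and $\mbox{\rm epf}\,(D^{\#}u(t,x(t)),(1,\bar{v}(t,x(t))))=D^{\#}u(t,x(t))$. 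This curve is the candidate; the remaining work is to verify on some $[t_0-\delta_0,t_0]$ that it lies in $\Sigma\setminus\Gamma$ and actually satisfies \eqref{eq:strict chara negative}.

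The second step is to control the off-face functions $v_{k'+1},\ldots,v_k$. Unwinding $\mbox{\rm epf}\,(K,\theta)=\{x\in K:\langle y-x,\theta\rangle\geqslant0,\ \forall y\in K\}$ with $\theta=-(1,\bar{v}(t_0,x_0))$ shows $D^{\#}u(t_0,x_0)$ is the set of maximizers of $\langle\cdot,(1,\bar{v}(t_0,x_0))\rangle$ over $D^+u(t_0,x_0)$, so there exists $\sigma>0$ with $\langle Dv_i(t_0,x_0)-Dv_{k'}(t_0,x_0),(1,\bar{v}(t_0,x_0))\rangle\leqslant-\sigma$ for every $i>k'$. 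Continuity of $Dv_i$ and of $\bar{v}(\cdot,x(\cdot))$ lets me shrink to $\delta_0\leqslant\delta$ so that the analogous inequality with $\sigma/2$ holds along $x(\cdot)$ on $[t_0-\delta_0,t_0]$. Integrating $\tfrac{d}{ds}[v_i(s,x(s))-v_{k'}(s,x(s))]$ from $t<t_0$ up to $t_0$ and using $v_i(t_0,x_0)=v_{k'}(t_0,x_0)$ gives $v_i(t,x(t))>v_{k'}(t,x(t))$ for $t\in[t_0-\delta_0,t_0)$ and $i>k'$. Hence $u(t,x(t))=v_1(t,x(t))=\cdots=v_{k'}(t,x(t))$ and $D^+u(t,x(t))=D^{\#}u(t,x(t))$; since $k'\geqslant2$ with $\{Dv_i(t,x(t))\}_{i=1}^{k'}$ geometrically independent, $(t,x(t))\in\Sigma\setminus\Gamma$.

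Third, I verify that $x(\cdot)$ solves \eqref{eq:strict chara negative}. For $t\in[t_0-\delta_0,t_0)$, because $D^+u(t,x(t))=D^{\#}u(t,x(t))$, the unique minimum-energy element $(q(t,x(t)),p(t,x(t)))$ of $(q,p)\mapsto q+H(t,x(t),p,u(t,x(t)))$ on $D^+u(t,x(t))$ coincides with $(\bar{q}(t,x(t)),\bar{p}(t,x(t)))$ by Lemma \ref{lem:fundamental structure}(a), and so $v(t,x(t))=H_p(t,x(t),p(t,x(t)),u(t,x(t)))=\bar{v}(t,x(t))=\dot{x}(t)$. At $t=t_0$, Lemma \ref{lem:mini-max} gives $\dot{x}^-(t_0)=\bar{v}(t_0,x_0)=H_p(t_0,x_0,p_0,u(t_0,x_0))$. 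For uniqueness, given any Lipschitz solution $\gamma$ of \eqref{eq:strict chara negative} with the same endpoint data, set $\tau=\inf\{t\in[t_0-\delta_0,t_0]:\gamma(s)=x(s)\ \forall s\in[t,t_0]\}$ and mimic the sup-argument from Theorem \ref{thm:local propagation positive} reversed in time: the strict separation above persists in a small interval just to the left of $\tau$, which forces $\gamma$ to solve $\dot{\gamma}=\bar{v}(t,\gamma(t))$ there; the Cauchy-Lipschitz theorem then yields $\gamma=x$ near $\tau$, contradicting minimality of $\tau$ unless $\tau=t_0-\delta_0$.

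The main obstacle I expect is the uniqueness step. The minimax condition \emph{a priori} allows several candidate exposed faces of $D^+u(t_0,x_0)$, and only the prescribed endpoint derivative $\dot{\gamma}^-(t_0)=H_p(t_0,x_0,p_0,u(t_0,x_0))$ singles out $D^{\#}u(t_0,x_0)$. Tracking how this initial choice propagates backwards through the minimum-energy selection along $\gamma$ is the delicate part, since the singularity mechanism along $x(\cdot)$ in the past (coincidence of $v_1,\ldots,v_{k'}$ with the other $v_i$'s strictly larger) differs from the one at $t_0$ itself (coincidence of all $v_i$'s), and this structural shift must be established uniformly on a neighborhood of $\tau$ before any Cauchy-Lipschitz comparison can be invoked.
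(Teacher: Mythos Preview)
Your proposal is correct and follows essentially the same route as the paper: invoke Lemma~\ref{lem:mini-max} to identify $(q_0,p_0)$ with $(\bar q(t_0,x_0),\bar p(t_0,x_0))$, apply Theorem~\ref{pro:fundamental structure} to obtain the two-sided $C^{R+1}$ curve, and then run the argument of Theorem~\ref{thm:local propagation positive} with the time direction and the sign of the exposed-face inequality reversed. The paper's own proof is in fact terser than yours---it simply records the reduction to Theorem~\ref{pro:fundamental structure} and then says ``the rest of the proof is almost the same as that of Theorem~\ref{thm:local propagation positive}''---so the explicit off-face separation and the backward $\tau$-argument you wrote out are exactly the details the paper leaves to the reader.
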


\begin{proof}
In view of Lemma \ref{lem:mini-max} we have $(\bar{q}(t_0,x_0),\bar{p}(t_0,x_0))\in \mbox{\rm ri}\,(D^{\#}u(t_0,x_0))$. Thus, by Theorem \ref{pro:fundamental structure}, there exists $\delta>0$ such that \eqref{eq:strict chara 0} has a unique $C^{R+1}$ solution $x:[t_0-\delta,t_0+\delta]\to\R^n$. In fact, there exists $0<\delta_0\leqslant\delta$ such that $x:[t_0-\delta_0,t_0]\to \R^n$ is the unique $C^{R+1}$ solution of \eqref{eq:strict chara negative}. The rest of the proof is almost the same as that of Theorem \ref{thm:local propagation positive}.
\end{proof}

Now we consider the case $(t_0,x_0)\in\Sigma\setminus\Gamma$ and $k=2$. By Proposition \ref{irregu non-conj pro} and Corollary \ref{cor:conj closed}, there exists a neighborhood $N\subset\Gamma^{c}$ of $(t_0,x_0)$ and $v_1,v_2\in C^{R+1}(N,\R)$ such that
\begin{align*}
	u(t,x)=\min\{v_1(t,x),v_2(t,x)\},\qquad (t,x)\in N,
\end{align*}
and we have $v_1(t_0,x_0)=v_2(t_0,x_0)$.
Since $Dv_1(t_0,x_0)\neq Dv_2(t_0,x_0)$, by the implicit function theorem, there exists a neighborhood $N_0\subset N$ of $(t_0,x_0)$ such that $v_1(t,x)=v_2(t,x)$ determines a $C^{R+1}$ hyper-surface $K^{*}=N_0\cap\Sigma$.

\begin{Cor}\label{cor:local propagation k=2}
Suppose $(t_0,x_0)\in\Sigma\setminus\Gamma$, and $k=2$. Then there exists $\delta_0>0$ such that the equation
\begin{equation}\label{eq:strict chara 3}
\begin{cases}
&\dot{x}(t)=v(t,x(t)),\qquad \forall t\in[t_0-\delta_0,t_0+\delta_0],\\
&x(t_0)=x_0.
\end{cases}
\end{equation}
has a unique $C^{R+1}$ solution $x:[t_0-\delta_0,t_0+\delta_0]\to \R^n$ and
\begin{align*}
(t,x(t))\in\Sigma\setminus\Gamma,\qquad \forall t\in[t_0-\delta_0,t_0+\delta_0].
\end{align*}
\end{Cor}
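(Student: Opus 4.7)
The plan is to realize Corollary \ref{cor:local propagation k=2} as a bilateral application of Theorem \ref{thm:local propagation positive} (forward branch) and Theorem \ref{thm:local propagation negative} (backward branch), followed by a gluing argument at $t_0$. Since $k=2$, we have $D^+u(t_0,x_0)=[Dv_1(t_0,x_0),Dv_2(t_0,x_0)]$, a line segment with two distinct endpoints. The minimum energy element $(q_0,p_0)=(q(t_0,x_0),p(t_0,x_0))$ is the unique minimizer of the strictly convex function $(q,p)\mapsto q+H(t_0,x_0,p,u(t_0,x_0))$ on this segment. The key structural observation is that the hypothesis $k=2$ in tandem with the requirement that the integral curve remain in the (locally smooth hypersurface) $\Sigma=K^*$ forces the shock regime: $(q_0,p_0)$ must be in the relative interior of the segment, so $D^{\#}u(t_0,x_0)=D^+u(t_0,x_0)$ and $k'=2$.

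First I would check the non-degeneracy conditions of Definition 4.3. The two endpoints $Dv_1(t_0,x_0)\neq Dv_2(t_0,x_0)$ are trivially geometrically independent, and $(q_0,p_0)\in\mbox{\rm ri}\,(D^{\#}u(t_0,x_0))$ by the shock hypothesis. Theorem \ref{thm:local propagation positive} then yields a unique $C^{R+1}$ solution $x^{+}:[t_0,t_0+\delta^{+}]\to\R^{n}$ with $(t,x^{+}(t))\in\Sigma\setminus\Gamma$. For the backward branch I would verify the minimax condition: because $D^{\#}u(t_0,x_0)$ equals the entire segment $D^+u(t_0,x_0)$ and coincides with $\mbox{\rm epf}\,(D^+u(t_0,x_0),(1,v_0))$ with $v_0=H_p(t_0,x_0,p_0,u(t_0,x_0))$, one has $\langle Dv_2(t_0,x_0)-Dv_1(t_0,x_0),(1,v_0)\rangle=0$, so the opposite exposed face $\mbox{\rm epf}\,(D^+u(t_0,x_0),-(1,v_0))$ also equals the whole segment and contains $(q_0,p_0)$; moreover $(q_0,p_0)\notin D^*u(t_0,x_0)=\{Dv_1,Dv_2\}$ since it is strictly interior. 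Hence $(q_0,p_0)$ is minimax and non-degenerate, and Theorem \ref{thm:local propagation negative} furnishes a unique $C^{R+1}$ solution $x^{-}:[t_0-\delta^{-},t_0]\to\R^{n}$ with $(t,x^{-}(t))\in\Sigma\setminus\Gamma$.

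The final step is gluing: set $\delta_0=\min\{\delta^{+},\delta^{-}\}$ and paste $x^{-}$ and $x^{+}$ at $t_0$. Along both branches, the proofs of Theorems \ref{thm:local propagation positive} and \ref{thm:local propagation negative} identify $v(t,x(t))$ with the smooth extension $\bar{v}(t,x(t))$ of Lemma \ref{lem:vector C R}, which is $C^{R}$ on a full neighborhood $N_2$ of $(t_0,x_0)$. Both branches therefore coincide with the unique Cauchy-Lipschitz solution of the smooth problem $\dot{x}=\bar{v}(t,x)$, $x(t_0)=x_0$, restricted respectively to $[t_0-\delta_0,t_0]$ and $[t_0,t_0+\delta_0]$; the concatenation is thus a single $C^{R+1}$ solution on the full symmetric interval, and uniqueness at the level of the original ODE $\dot{x}=v(t,x)$ follows because any such solution must stay in $K^*$ by Theorem \ref{pro:fundamental structure}. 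The main conceptual obstacle is not any individual verification but the reduction to the shock regime $k'=2$: in the complementary endpoint case $k'=1$, the formal ODE $\dot{x}=v(t,x)$ would merely track a classical characteristic that exits $\Sigma$ immediately, so the conclusion $(t,x(t))\in\Sigma$ forces us into the interior-minimum configuration from the outset.
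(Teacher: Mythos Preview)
Your overall strategy matches the paper's: verify that $(t_0,x_0)$ is non-degenerate, verify that $(q_0,p_0)$ is a non-degenerate minimax, then invoke Theorems \ref{thm:local propagation positive} and \ref{thm:local propagation negative}. The gluing argument you add via the smooth field $\bar v$ is a reasonable (and slightly more explicit) way to package the two halves into a single $C^{R+1}$ curve.

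There is, however, a genuine gap in your justification that $k'=2$, i.e.\ that $(q_0,p_0)$ lies in the \emph{open} segment $(Dv_1(t_0,x_0),Dv_2(t_0,x_0))$. Your argument is circular: you say that if $(q_0,p_0)$ were an endpoint the ODE would follow a classical characteristic leaving $\Sigma$, ``so the conclusion $(t,x(t))\in\Sigma$ forces us into the interior-minimum configuration.'' But that conclusion is exactly what you are trying to prove; you cannot use it to rule out the endpoint case. The paper simply asserts $(q(t_0,x_0),p(t_0,x_0))\in(Dv_1(t_0,x_0),Dv_2(t_0,x_0))$, but the correct reason is direct and does not appeal to the conclusion: by Proposition \ref{pro:D^*}(3) each endpoint $Dv_i(t_0,x_0)\in D^*u(t_0,x_0)$ satisfies $q_i+H(t_0,x_0,p_i,u(t_0,x_0))=0$, and $p_1\neq p_2$ (else $q_1=q_2$ and $Dv_1=Dv_2$). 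Parametrizing the segment affinely in $\lambda\in[0,1]$, the energy $E(\lambda)=q(\lambda)+H(t_0,x_0,p(\lambda),u(t_0,x_0))$ is strictly convex (strict convexity of $H$ in $p$) with $E(0)=E(1)=0$, so its unique minimizer lies in $(0,1)$. This gives $k'=2$ and $(q_0,p_0)\in\mbox{\rm ri}\,(D^+u(t_0,x_0))$ outright, after which your remaining verifications (geometric independence of two distinct vectors, the minimax check via $\mbox{\rm epf}\,(D^+u,-(1,v_0))=D^+u$, and the gluing) go through as written.
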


\begin{proof}
Since $Dv_1(t_0,x_0)\neq Dv_2(t_0,x_0)$ and $(q(t_0,x_0),p(t_0,x_0))\in(Dv_1(t_0,x_0),Dv_2(t_0,x_0))$, the point $(t_0,x_0)\in\Sigma\setminus\Gamma$ is spontaneously non-degenerate. On the other hand, we have
\begin{align*}
[Dv_1(t_0,x_0),Dv_2(t_0,x_0)]=\mbox{\rm epf}\,(D^{+}u(t_0,x_0),-(1,v(t_0,x_0))),
\end{align*}
which implies $(q(t_0,x_0),p(t_0,x_0))$ is a non-degenerate mini-max of $(t_0,x_0)$. Therefore, our conclusion follows directly from Theorem \ref{thm:local propagation positive} and \ref{thm:local propagation negative}.
\end{proof}

\begin{Rem}
The conditions in the main results (Theorem \ref{thm:local propagation positive} and Theorem \ref{thm:local propagation negative}) are satisfied when $n=1$, because any exposed face of $D^+u(t_0,x_0)$ is a segment for $(t_0,x_0)\in\Sigma\setminus\Gamma$.
\end{Rem}

\section{Global propagation of $C^1$ singular support}

For any viscosity solution $u$ of \eqref{eq:HJe}, the \emph{$C^1$ support} (resp. \emph{$C^{1,1}$ support}) of $u$, denoted by $\supp_{C^1}u$ (resp. $\supp_{C^{1,1}}u$), is the set of $(t,x)\in(0,+\infty)\times \R^{n}$ such that there exists a neighborhood $V$ of $(t,x)$ and the restriction of $u$ on $V$ is of class $C^1$. We call complement of $C^1$ support (resp. $C^{1,1}$ support) of $u$, denoted by $\singsupp_{C^1}u$ (resp. $\singsupp_{C^{1,1}}u$), the \emph{$C^1$ singular support} (resp. \emph{$C^{1,1}$ singular support}) of $u$.

In this section, we will show that the $\singsupp_{C^1}u$ propagates along the generalized characteristics globally for the contact type Hamilton-Jacobi equation \eqref{eq:HJe} (See \cite{Albano2014_1} for the classical case).

\begin{Lem}\label{glo l1}
$\singsupp_{C^1}u=\bar{\Sigma}$ and $\supp_{C^1}u=\supp_{C^{1,1}}u$.
\end{Lem}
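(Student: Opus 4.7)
The plan is to decompose the lemma into two equalities and chain them through the earlier structural results, in particular Corollary \ref{regu non-conj cor} (which says $u\in C^{R+1}$ on $(\Sigma\cup\Gamma)^c$) and the Corollary that $\Sigma\cup\Gamma=\bar{\Sigma}$.

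First I would establish $\singsupp_{C^1}u=\bar{\Sigma}$ by a two-sided inclusion. For the inclusion $\singsupp_{C^1}u\subset\bar{\Sigma}$, I would simply take $(t,x)\in\bar{\Sigma}^c=(\Sigma\cup\Gamma)^c$ and invoke Corollary \ref{regu non-conj cor} to get a full open neighborhood on which $u$ is $C^{R+1}$, hence certainly $C^1$; therefore $(t,x)\in\supp_{C^1}u$. For the reverse inclusion $\bar{\Sigma}\subset\singsupp_{C^1}u$, I would use density: if $(t,x)\in\bar{\Sigma}$, pick a sequence $(t_n,x_n)\in\Sigma$ with $(t_n,x_n)\to(t,x)$ (guaranteed by $\Sigma\cup\Gamma=\bar{\Sigma}$ together with the proposition preceding it, which produces nearby non-differentiable points even when $(t,x)\in\Gamma\setminus\Sigma$). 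If $u$ were of class $C^1$ on some open neighborhood $V$ of $(t,x)$, then eventually $(t_n,x_n)\in V$, forcing $u$ to be differentiable at $(t_n,x_n)$, contrary to $(t_n,x_n)\in\Sigma$.

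Next I would deduce the equality $\supp_{C^1}u=\supp_{C^{1,1}}u$. The inclusion $\supp_{C^{1,1}}u\subset\supp_{C^1}u$ is immediate from the definitions. For the opposite inclusion, take $(t,x)\in\supp_{C^1}u=(\singsupp_{C^1}u)^c$, which by the first part equals $\bar{\Sigma}^c=(\Sigma\cup\Gamma)^c$; then Corollary \ref{regu non-conj cor} again provides an open neighborhood of $(t,x)$ on which $u$ is $C^{R+1}\subset C^{1,1}$, so $(t,x)\in\supp_{C^{1,1}}u$.

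No step here is technically difficult; the proof is essentially bookkeeping that packages the earlier results. The only point that deserves care is making sure the density of $\Sigma$ in $\bar{\Sigma}$ is invoked correctly for the non-trivial inclusion at points of $\Gamma\setminus\Sigma$, since differentiability at a single conjugate point does not preclude the failure of $C^1$ regularity in a neighborhood — that is precisely what the approximating sequence in $\Sigma$ rules out.
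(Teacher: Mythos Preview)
Your proof is correct, but it takes a somewhat different route from the paper's for both halves of the argument.

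For the first equality, the paper argues directly from local semiconcavity: on the open set $\bar{\Sigma}^c$ the function $u$ is differentiable everywhere, and a locally semiconcave function that is everywhere differentiable on an open set is automatically $C^1$ there. You instead route through Corollary~\ref{regu non-conj cor} together with $\bar{\Sigma}=\Sigma\cup\Gamma$, which yields the stronger conclusion $u\in C^{R+1}$ on $\bar{\Sigma}^c$. Both give the needed inclusion; your version exploits the smooth-initial-data machinery, while the paper's relies only on semiconcavity. (Incidentally, your density argument for $\bar{\Sigma}\subset\singsupp_{C^1}u$ needs no appeal to the preceding proposition: by definition of closure, every point of $\bar{\Sigma}$ is a limit of points of $\Sigma$, and that is all you use.)

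The more substantive difference is in the second equality. The paper argues that on $\supp_{C^1}u$ the function $u$ is a $C^1$ classical solution of \eqref{eq:HJe}, hence locally semiconvex with linear modulus; combined with the standing local semiconcavity this gives $C^{1,1}_{\mathrm{loc}}$. You instead observe that $\supp_{C^1}u=\bar{\Sigma}^c=(\Sigma\cup\Gamma)^c$ by the first part, and then invoke Corollary~\ref{regu non-conj cor} once more to get $C^{R+1}\subset C^{1,1}$ directly. Your argument is shorter here and gives a stronger regularity conclusion, but it leans on the structural analysis specific to smooth $u_0$; the paper's semiconcave/semiconvex argument is the more robust PDE fact, valid without any reference to characteristics or conjugate points.
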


\begin{proof}
By the definition of $\supp_{C^1}u$, we know that $\supp_{C^1}u\cap\bar{\Sigma}=\varnothing$. Since $u$ is locally semiconcave with linear modulus, we have $(0,+\infty)\times\R^{n}\setminus \bar{\Sigma}\subset\supp_{C^1}u$. Thus, $\supp_{C^1}u=(0,+\infty)\times\R^{n}\setminus \bar{\Sigma}$, i.e., $\singsupp_{C^1}u=\bar{\Sigma}$.
The relation $\supp_{C^{1,1}}u\subset\supp_{C^{1}}u$ is trivial. Since $u$ is a $C^1$ solution of (HJe) on $\supp_{C^{1}}u$, we conclude $u$ is locally semiconvex with linear modulus on $\supp_{C^{1}}u$. Therefore, $u$ is of class $C^{1,1}_{loc}$ on $\supp_{C^{1}}u$. It follows $\supp_{C^{1}}u\subset\supp_{C^{1,1}}u$ and $\supp_{C^1}u=\supp_{C^{1,1}}u$.
\end{proof}

\begin{Lem}\label{glo l2}
Suppose $(t,x_{i})\in (0,+\infty)\times\R^{n}$ and $\xi_{i}$ is a minimizer for $u(t,x_{i})$, $i=1,2$, $x_{1}\neq x_{2}$. Then, we have $\xi_{1}(s)\neq\xi_{2}(s)$ for any $s\in(0,t)$.
\end{Lem}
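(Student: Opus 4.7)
The plan is to argue by contradiction: assume there exists $s_0\in(0,t)$ with $\xi_1(s_0)=\xi_2(s_0)=:y$, and use the combined rigidity of (i) the dynamic programming principle, (ii) the sensitivity formula $p(s)=L_v(s,\xi(s),\dot\xi(s),u_\xi(s))=\nabla u(s,\xi(s))$ at interior times, and (iii) uniqueness of solutions of the Lie equation, to force $\xi_1\equiv\xi_2$ on $[0,t]$, contradicting $x_1=\xi_1(t)\neq\xi_2(t)=x_2$.

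Concretely, first I would use Proposition \ref{pro:D^*}(4) to conclude that $u$ is differentiable at the interior point $(s_0,y)$. Next I would apply Proposition \ref{pro:dyn_prog} to each minimizer to get $u_{\xi_i}(s_0)=u(s_0,\xi_i(s_0))=u(s_0,y)$ for $i=1,2$; in particular $u_{\xi_1}(s_0)=u_{\xi_2}(s_0)$. Then Proposition \ref{pro:sensitive} gives
\begin{equation*}
L_v(s_0,y,\dot\xi_1(s_0),u(s_0,y))=\nabla u(s_0,y)=L_v(s_0,y,\dot\xi_2(s_0),u(s_0,y)).
\end{equation*}
Because $L(s_0,y,\cdot,u(s_0,y))$ is strictly convex by (L1), the map $v\mapsto L_v(s_0,y,v,u(s_0,y))$ is injective, so $\dot\xi_1(s_0)=\dot\xi_2(s_0)$.

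At this point, for each $i$ the triple $(\xi_i,p_i,u_{\xi_i})$ solves the Lie system \eqref{eq:Lie} on $[0,t]$ by Proposition \ref{pro:Herglotz_Lie}(2), and at time $s_0$ these two triples share the same data
\begin{equation*}
(\xi_i(s_0),p_i(s_0),u_{\xi_i}(s_0))=(y,\nabla u(s_0,y),u(s_0,y)),\qquad i=1,2.
\end{equation*}
Since $H$ is $C^{R+1}$, the Lie equation has unique solutions with prescribed initial data (Cauchy--Lipschitz), hence $\xi_1\equiv\xi_2$ on $[0,t]$, in particular $x_1=\xi_1(t)=\xi_2(t)=x_2$. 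This contradicts $x_1\neq x_2$ and proves the lemma.

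The only subtle point to handle carefully is step (ii): one must be sure that \emph{the same} value of $u$ is used as the $r$-argument in $L_v$ for both curves, which is exactly why invoking the dynamic programming equality $u_{\xi_i}(s_0)=u(s_0,y)$ before comparing $L_v$'s is essential. Everything else is a routine application of the tools already established in Section~2.
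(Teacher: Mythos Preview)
Your proof is correct. Both your argument and the paper's proceed by contradiction and conclude via uniqueness for the Lie system, but the execution differs. The paper concatenates $\xi_1|_{[0,s_0]}$ with $\xi_2|_{[s_0,t]}$, observes that this broken curve is a minimizer for $u(t,x_2)$ (implicitly using the dynamic programming principle to match the Carath\'eodory values at $s_0$), hence is $C^{R+1}$ and satisfies the Lie equation with the same \emph{initial} data at $s=0$ as $\xi_1$; Cauchy--Lipschitz then forces the concatenation to coincide with $\xi_1$, giving $x_2=x_1$. You instead compare $\xi_1$ and $\xi_2$ directly at the interior time $s_0$: Proposition~\ref{pro:sensitive} and differentiability of $u$ at $(s_0,y)$ equate the dual momenta, strict convexity equates the velocities, and you invoke uniqueness for the Lie system from $s_0$. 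Your route is slightly more direct since it bypasses the broken-curve construction and the regularity step it entails; the paper's route is the classical ``broken extremal'' argument and lets the minimizer property carry all the weight. Either way, the one point that must be handled with care---that $u_{\xi_1}(s_0)=u_{\xi_2}(s_0)$---is the same, and you flag it correctly.
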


\begin{proof}
If there exists $s_{0}\in (0,t)$ such that $\xi_{1}(s_0)=\xi_{2}(s_0)$, let
\begin{equation*}
\xi(s)=
\begin{cases}
\xi_{1}(s),\ s\in[0,s_{0}]\\
\xi_{2}(s),\ s\in[s_{0},t].
\end{cases}
\end{equation*}
Then $\xi$ is a minimizer for $u(t,x_2)$. So $\xi$ is the solution of the associated Lie equation \eqref{eq:Lie2} with initial condition
\begin{equation*}
\begin{cases}
\xi(0)=\xi_{1}(0)\\
p(0)=L_{v}(0,\xi_{1}(0),\dot{\xi_{1}}(0),u_{0}(\xi_{1}(0)))\\
u(0)=u_{0}(\xi_{1}(0)).
\end{cases}
\end{equation*}
Then, Cauchy-Lipschitz theorem implies $\xi\equiv\xi_{1}$ since two two curves satisfy the same initial condition. Thus, $x_{2}=\xi_{2}(t)=\xi(t)=\xi_{1}(t)=x_{1}$. This leads to a contradiction with $x_{1}\neq x_{2}$. Therefore, $\xi_{1}(s)\neq\xi_{2}(s)$ for any $s\in(0,t)$.
\end{proof}

For $(t,x)\in\supp_{C^1}u$, let $y(\cdot;t,x)$ be the solution of the associated Lie equation \eqref{eq:Lie2} with terminal condition
\begin{equation*}
\begin{cases}
y(t)=x\\
p(t)=\nabla u(t,x)\\
u(t)=u(t,x).
\end{cases}
\end{equation*}
Then $y(\cdot;t,x)$ is the unique minimizer for $u(t,x)$. By Lemma \ref{glo l1} and Cauchy-Lipschitz theorem, we know that $y(s;t,x)$ is locally Lipschitz with respect to $(s;t,x)$.

\begin{Lem}\label{glo l3}
Suppose $(t_{0},x_{0})\in\supp_{C^1}u$. Then $(s,y(s;t_{0},x_{0}))\in\supp_{C^1}u$ for any $s\in(0,t_0)$.
\end{Lem}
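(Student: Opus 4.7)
The plan is to deduce the statement more or less directly from Proposition~\ref{mini int regu non-conj}, once we identify the curve $y(\cdot;t_0,x_0)$ --- defined via the backward Cauchy problem for the Lie system \eqref{eq:Lie2} --- with the trajectory $X(\cdot,z_0)$ issued from some $z_0\in\mathcal{Z}(t_0,x_0)$. The identification $\supp_{C^1}u=(\bar{\Sigma})^c$ from Lemma~\ref{glo l1}, combined with the corollary $\bar{\Sigma}=\Sigma\cup\Gamma$, reduces the problem to showing that $(s,y(s;t_0,x_0))\notin\Sigma\cup\Gamma$ for every $s\in(0,t_0)$.

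The first step is to verify that $y(\cdot;t_0,x_0)$ actually coincides with the (unique) minimizer of \eqref{eq:cov} at $(t_0,x_0)$. Since $(t_0,x_0)\in\supp_{C^1}u$, the function $u$ is differentiable at $(t_0,x_0)$, so Proposition~\ref{pro:D^*}(3) yields $D^*u(t_0,x_0)=\{(u_t(t_0,x_0),\nabla u(t_0,x_0))\}$ as a singleton, and the correspondence in Proposition~\ref{pro:1-1} provides a unique minimizer $\xi$. By Proposition~\ref{pro:Herglotz_Lie} this $\xi$ solves the Lie equation \eqref{eq:Lie}, while Proposition~\ref{pro:sensitive} together with the dynamic programming principle (Proposition~\ref{pro:dyn_prog}) ensures that it meets the terminal data $\xi(t_0)=x_0$, $L_v(t_0,\xi(t_0),\dot\xi(t_0),u_\xi(t_0))=\nabla u(t_0,x_0)$, $u_\xi(t_0)=u(t_0,x_0)$. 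Backward Cauchy--Lipschitz applied to the Lie system then forces $\xi\equiv y(\cdot;t_0,x_0)$ on $[0,t_0]$.

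Setting $z_0:=y(0;t_0,x_0)$, this identification gives $z_0\in\mathcal{Z}(t_0,x_0)$ and $y(\cdot;t_0,x_0)\equiv X(\cdot,z_0)$. Proposition~\ref{mini int regu non-conj} applied to $z_0$ then yields $(s,X(s,z_0))\notin\Sigma\cup\Gamma$ for all $s\in(0,t_0)$, equivalently $(s,y(s;t_0,x_0))\notin\bar{\Sigma}$, and Lemma~\ref{glo l1} closes the argument.

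I do not anticipate any substantial obstacle here: the essential content of the lemma has already been absorbed into Proposition~\ref{mini int regu non-conj}, and the only delicate issue is recognizing that the curve $y(\cdot;t_0,x_0)$, which is \emph{a priori} merely a solution of the characteristic ODE with terminal data built out of $\nabla u(t_0,x_0)$, is actually a global minimizer. This is precisely where the $C^1$ hypothesis at $(t_0,x_0)$ enters, through the uniqueness of $D^*u(t_0,x_0)$ and the bijection of Proposition~\ref{pro:1-1}; without this hypothesis one would only obtain a characteristic rather than a minimizing trajectory, and Proposition~\ref{mini int regu non-conj} would not apply.
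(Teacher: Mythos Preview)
Your argument is correct, but it follows a different route from the paper's.

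The paper does not invoke Proposition~\ref{mini int regu non-conj} or the identity $\bar{\Sigma}=\Sigma\cup\Gamma$ at all. Instead, it fixes $s\in(0,t_0)$, takes a ball $B_\varepsilon(t_0,x_0)\subset\supp_{C^1}u$, and defines the backward flow map
\[
F:(t,x)\longmapsto\bigl(t-(t_0-s),\,y(t-(t_0-s);t,x)\bigr).
\]
This map is Lipschitz, and Lemma~\ref{glo l2} (minimizers with distinct endpoints cannot cross at interior times) makes it injective. Invariance of domain then forces $F(B_\varepsilon(t_0,x_0))$ to be open; since every point in the image is interior to a minimizer, Proposition~\ref{pro:D^*}(4) gives $F(B_\varepsilon(t_0,x_0))\cap\Sigma=\varnothing$, and openness upgrades this to $F(B_\varepsilon(t_0,x_0))\cap\bar{\Sigma}=\varnothing$. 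Thus $(s,y(s;t_0,x_0))=F(t_0,x_0)\in\supp_{C^1}u$.

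Your approach is shorter and perfectly legitimate: once $y(\cdot;t_0,x_0)$ is identified with $X(\cdot,z_0)$ for some $z_0\in\mathcal{Z}(t_0,x_0)$, Proposition~\ref{mini int regu non-conj} plus the corollary $\bar{\Sigma}=\Sigma\cup\Gamma$ and Lemma~\ref{glo l1} finish the job immediately. The trade-off is that you are importing the second-variation/Jacobi analysis of Section~\ref{sec:irr_conj}, whereas the paper's argument stays within Section~5 and uses only the soft facts that interior points of minimizers are differentiable (Proposition~\ref{pro:D^*}(4)) and that minimizers do not intersect (Lemma~\ref{glo l2}). The paper's proof is thus more self-contained and would survive even without the identification $\bar{\Sigma}=\Sigma\cup\Gamma$, but yours makes clearer that the lemma is essentially a corollary of results already in hand.
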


\begin{proof}
Fix $s\in(0,t_0)$. $(t_0,x_0)\in\supp_{C^1}u$ implies that there exists $0<\varepsilon<s$ such that $B_{\varepsilon}(t_{0},x_{0})\subset\supp_{C^1}u$. Consider the map
\begin{align*}
  F:B_{\varepsilon}(t_{0},x_{0})\to&\,(0,+\infty)\times\R^{n},\\
    (t,x)\mapsto&\,(t-(t_{0}-s),y(t-(t_{0}-s);t,x)).
\end{align*}
Then, $F$ is a Lipschitz map and Lemma \ref{glo l2} implies $F$ is injective. Due to invariance of domain, we know that $F(B_{\varepsilon}(t_{0},x_{0}))$ is an open set. Noticing $F(B_{\varepsilon}(t_{0},x_{0}))\cap \Sigma=\varnothing$, we have $F(B_{\varepsilon}(t_{0},x_{0}))\cap \bar{\Sigma}=\varnothing$, that is, $F(B_{\varepsilon}(t_{0},x_{0}))\subset \supp_{C^1}u$. Therefore, we conclude $(s,y(s;t_{0},x_{0}))=F(t_{0},x_{0})\in\supp_{C^1}u$.
\end{proof}

\begin{The}\label{global propagation}
Suppose $(t_0,x_0)\in \bar{\Sigma}$, $T>t_{0}$, $\gamma:[t_{0},T]\to\R^{n}$ is a solution of of the differential inclusion
\begin{equation}\label{ci}
	\dot{x}\in \mbox{\rm co}H_{p}(s,x,\nabla^{+} u(s,x),u(s,x)),\ s\geqslant 0.
\end{equation}
with initial condition $\gamma(t_0)=x_0$. Then for any $t\in(t_0,T]$, we have $(t,\gamma(t))\in \bar{\Sigma}$.
\end{The}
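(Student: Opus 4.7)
The plan is a proof by contradiction. Suppose there is some $t_1\in(t_0,T]$ with $(t_1,\gamma(t_1))\in\supp_{C^1}u$. Because $\supp_{C^1}u$ is open in $(0,+\infty)\times\R^n$ and $\gamma$ is continuous, the set
\begin{equation*}
A=\{t\in[t_0,T]:(t,\gamma(t))\in\supp_{C^1}u\}
\end{equation*}
is open in $[t_0,T]$ and contains $t_1$. Let $(a,b)$ be the connected component of $A$ containing $t_1$, where $t_0\leqslant a<t_1<b\leqslant T$. Note that $(a,\gamma(a))\in\bar\Sigma$: either $a=t_0$ and this is the hypothesis, or $a\in(t_0,T)$ is a boundary point of $A$ and we apply Lemma \ref{glo l1}.

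The first step is to show that $\gamma$ is a smooth classical characteristic on $(a,b)$. By Lemma \ref{glo l1} we have $u\in C^{1,1}_{\rm loc}$ on $\supp_{C^1}u$, so on $(a,b)$ the superdifferential $\nabla^{+}u(t,\gamma(t))=\{\nabla u(t,\gamma(t))\}$ is a singleton and the differential inclusion \eqref{ci} collapses to the Carath\'eodory equation
\begin{equation*}
\dot\gamma(t)=H_p\bigl(t,\gamma(t),\nabla u(t,\gamma(t)),u(t,\gamma(t))\bigr)\quad\text{a.e.\ }t\in(a,b),
\end{equation*}
with locally Lipschitz right-hand side. On the other hand, the backward characteristic $y(\,\cdot\,;t_1,\gamma(t_1))$ obtained from Proposition \ref{pro:Herglotz_Lie} (as the unique minimizer ending at $(t_1,\gamma(t_1))$) satisfies the same ODE on $(0,t_1)$ by Proposition \ref{pro:sensitive}. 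Carath\'eodory uniqueness therefore forces
\begin{equation*}
\gamma(s)=y(s;t_1,\gamma(t_1))\qquad\forall s\in(a,t_1],
\end{equation*}
and by continuity this identity extends to $s=a$.

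The second step is to invoke Lemma \ref{glo l3} on the curve $y(\,\cdot\,;t_1,\gamma(t_1))$, which is legitimate because $(t_1,\gamma(t_1))\in\supp_{C^1}u$: we obtain $(s,y(s;t_1,\gamma(t_1)))\in\supp_{C^1}u$ for every $s\in(0,t_1)$. If $a>t_0$, taking $s=a$ yields $(a,\gamma(a))\in\supp_{C^1}u$, contradicting $(a,\gamma(a))\in\bar\Sigma$. If $a=t_0$, taking $s=t_0>0$ yields $(t_0,x_0)=(t_0,y(t_0;t_1,\gamma(t_1)))\in\supp_{C^1}u$, again contradicting the hypothesis $(t_0,x_0)\in\bar\Sigma$. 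Either way the initial supposition fails, hence $(t,\gamma(t))\in\bar\Sigma$ for every $t\in(t_0,T]$.

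The main obstacle, and the step requiring the most care, is the identification of $\gamma$ with the $C^{1}$ backward characteristic $y(\,\cdot\,;t_1,\gamma(t_1))$ on $(a,t_1]$. The subtlety is that $\dot\gamma$ is only defined almost everywhere and is a priori only a measurable selection in the convex hull $\mathrm{co}\,H_p(\cdot,\nabla^{+}u,\cdot)$, whereas $y$ is of class $C^{1}$; to close the gap one must combine the absolute continuity of $\gamma$ with the collapse $\nabla^{+}u=\{\nabla u\}$ on $\supp_{C^1}u$ and the ensuing local Lipschitz regularity of the vector field, so as to invoke Carath\'eodory uniqueness (rather than classical Cauchy--Lipschitz) together with continuity at the endpoint $s=a$. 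Once this matching is in place, the backward propagation of regularity provided by Lemma \ref{glo l3} delivers the contradiction immediately.
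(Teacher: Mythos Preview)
Your argument is correct and follows essentially the same route as the paper's proof: contradiction, identification of $\gamma$ with the backward characteristic $y$ via uniqueness for the Lipschitz ODE on $\supp_{C^1}u$, then Lemma \ref{glo l3} to force the starting point into $\supp_{C^1}u$. The paper's version is slightly leaner in that it applies Lemma \ref{glo l3} \emph{first}, placing the whole trajectory $\{(s,y(s;t,\gamma(t))):s\in[t_0,t]\}$ inside an open $V\subset\supp_{C^1}u$, and only then invokes Cauchy--Lipschitz on $V$ to conclude $\gamma\equiv y$ on $[t_0,t]$; this dispenses with the connected-component $(a,b)$ and the case split $a=t_0$ versus $a>t_0$.

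One point of ordering is worth tightening. In your first step you claim Carath\'eodory uniqueness forces $\gamma=y$ on $(a,t_1]$, citing only Proposition \ref{pro:sensitive} for $y$. But Proposition \ref{pro:sensitive} merely says $y$ satisfies $\dot y=H_p(s,y,\nabla u(s,y),u(s,y))$ \emph{along its own trajectory}; to invoke uniqueness you need both curves to solve the same ODE in a region where the right-hand side is locally Lipschitz, i.e.\ you need $(s,y(s))\in\supp_{C^1}u$. That is precisely the content of Lemma \ref{glo l3}, which you postpone to the second step. Swapping the order (Lemma \ref{glo l3} first, then uniqueness) closes this small gap and in fact lets you run the identification all the way down to $s=t_0$ in one stroke, as the paper does.
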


\begin{proof}
If there exists $t\in (t_0,T]$ such that $(t,\gamma(t))\notin \bar{\Sigma}$. Then $(t,\gamma(t))\in \supp_{C^1}u$. By Lemma \ref{glo l3}, we know that $(s,y(s;t,\gamma(t)))\in \supp_{C^1}u$ for all $s\in [t_0,t]$. Thus, there exists an open set $V$ such that $\{(s,y(s;t,\gamma(t)))|s\in[t_0,t]\}\subset V \subset \supp_{C^1}u$. This implies $y(\cdot;t,\gamma(t))$ is the solution of the equation
\begin{equation}\label{ch}
	\dot{x}=H_{p}(s,x,\nabla u(s,x),u(s,x)),\ s\geqslant 0
\end{equation}
with terminal condition $y(t)=\gamma(t)$. Noticing that $\gamma(\cdot)$ is a solution of \eqref{ci} with the same terminal condition and \eqref{ci} coincides with \eqref{ch} in $V$, we have $\gamma(\cdot)=y(\cdot;t,\gamma(t))$ by Cauchy-Lipschitz theorem. Therefore, $(t_0,x_0)=\gamma(t_0)=y(t_0;t,\gamma(t))\in\supp_{C^1}u$. This leads to a contradiction since $(t_0,x_0)\in \bar{\Sigma}$. %Thus, we have $(t,\gamma(t))\in \bar{\Sigma}$ for any $t\in(t_0,T]$.
\end{proof}

%\appendix

\section{Proofs of the propositions in Section \ref{section_3_1}}\label{sec:App_A}

In this section we afford detailed proofs of all propositions in Section \ref{section_3_1} for completion.

\begin{proof}[Proof of Proposition \ref{pro:Herglotz_Lie}]
By Lemma 3.1 in \cite{CCJWY2020} we conclude that the set $\mathcal{Z}_{t,x}$ is non-empty. For any $y_{t,x}\in \mathcal{Z}_{t,x}$ there exists a minimal curve $\xi\in\Gamma^{0,t}_{y_{t,x},x}$ for $h_L(0,t,y_{t,x},x,u_0(y_{t,x}))$ which is also a minimizer of \eqref{eq:cov}. The rest of the proofs of (1) and (2) is follows from the proof of Theorem 1 in \cite{CCWY2019}.

Now, we turn to prove (3). Since $\xi$ is a minimizer of $h(0,t,y_{t,x},x,u_{0}(y_{t,x}))$, we have
\begin{align*}
|\dot{\xi}(s)|\leqslant F\left(t,\frac{|x-y_{t,x}|}{t},|u_{0}(y_{t,x})|\right),\qquad \forall s\in[0,t],
\end{align*}
where $F$ is a locally bounded function. By Lemma 3.1 in \cite{CCJWY2020}, there holds $\frac{|x-y_{t,x}|}{t}\leqslant C_{1}(t,x)$, which implies $|u_{0}(y_{t,x})|\leqslant \max_{y\in B_{tC_{1}(t,x)}} |u_{0}(y)|$. Thus,
\begin{align*}
|\dot{\xi}(s)|\leqslant F(t,C_{1}(t,x),\quad\max_{y\in B_{tC_{1}(t,x)}} |u_{0}(y)|)=C_{2}(t,x),\qquad \forall s\in[0,t].
\end{align*}
Then it can be easily seen that there exists a locally bounded function $C(t,x)$ such that the estimates in (3) holds.
%\begin{align*}
%  		\max\{|\xi(s)|,|\dot{\xi}(s)|,|p(s)|,|u_{\xi}(s)|\}\leqslant C_2(t,x).
%\end{align*}
\end{proof}

\medskip

\begin{proof}[Proof of Proposition \ref{pro:dyn_prog}]
The proof is very similar to Proposition 3.3 in \cite{CCJWY2020}.
\end{proof}

\medskip

\begin{proof}[Proof of Proposition \ref{pro:D^*}]
The proof of (1) is very similar to Proposition 3.1 in \cite{CCJWY2020}, and The proof of (2) is directly from the semiconcavity estimate in the paper [?].

Now, Suppose $(t,x)\in (0,\infty)\times\R^{n}$. By semiconcavity of $u$, we have $D^{*}u(t,x)\subset D^{+}u(t,x)$ and $D^{+}u(t,x)=\mbox{\rm co}D^{*}u(t,x)$. This implies $\mbox{\rm Ext}(D^{+}u(t,x))\subset D^{*}u(t,x)$. It follows from (1) that
\begin{align*}
q+H(t,x,p,u(t,x))=0,\ \forall (q,p)\in D^{*}u(t,x).
\end{align*}
Thus, $D^*u(t,x)\subset\{(q,p)\in D^+u(t,x): q+H(t,x,p,u(t,x))=0\}$. To obtain the conclusion, we only need to show $\{(q,p)\in D^+u(t,x): q+H(t,x,p,u(t,x))=0\}\subset \mbox{\rm Ext}(D^{+}u(t,x))$.
For any $(q,p)\in D^{+}u(t,x)/\mbox{\rm Ext}(D^{+}u(t,x))$, there exists $(q_{1},p_{1}),(q_{2},p_{2})\in D^{+}u(t,x),\ (q_{i},p_{i})\neq(q,p),i=1,2$ and $0<\lambda<1$ such that $(q,p)=\lambda(q_{1},p_{1})+(1-\lambda)(q_{2},p_{2})$. If $p_{1}=p_{2}=p$, then there holds $q_{1}<q<q_{2}$ (or $q_{2}<q<q_{1}$) and
\begin{align*}
q+H(t,x,p,u(t,x))<q_{2}+H(t,x,p_{2},u(t,x))\leqslant 0.
\end{align*}
If $p,p_{1},p_{2}$ are all different, then by strictly convexity of $H$,
\begin{align*}
&q+H(t,x,p,u(t,x))=\lambda q_{1}+(1-\lambda)q_{2}+H(t,x,\lambda p_{1}+(1-\lambda)p_{2},u(t,x))\\
&<\lambda(q_{1}+H(t,x,p_{1},u(t,x)))+(1-\lambda)(q_{2}+H(t,x,p_{2},u(t,x)))\leqslant 0.
\end{align*}
So we have
\begin{align*}
q+H(t,x,p,u(t,x))<0,\quad \forall (q,p)\in D^{+}u(t,x)\setminus\mbox{\rm Ext}(D^{+}u(t,x)).
\end{align*}
Therefore, $\{(q,p)\in D^+u(t,x): q+H(t,x,p,u(t,x))=0\}\subset \mbox{\rm Ext}(D^{+}u(t,x))$. This completes the proof of (3).

Finally, we turn to the proof of (4). Set $0<s<t$. By dynamic programming principle, we know that
\begin{align*}
u(\tau,\xi(\tau))=u(s,\xi(s))+\int_{s}^{\tau} L(r,\xi(r),\dot{\xi}(r),u_{\xi}(r)) dr,\quad \forall \tau\in[s,t],
\end{align*}
where $u_{\xi}$ is uniquely defined by \eqref{eq:cara1} with initial condition $u_{\xi}(s)=u(s,\xi(s))$. For any $(q,p)\in D^{+}u(s,\xi(s))$, we have
\begin{align*}
\lim_{\tau\to s^{+}}\frac{\int_{s}^{\tau} L(r,\xi(r),\dot{\xi}(r),u_{\xi}(r)) dr}{\tau-s}=L(s,\xi(s),\dot{\xi}(s),u_{\xi}(s))\geqslant p\cdot\dot{\xi}(s)-H(s,\xi(s),p,u(s,\xi(s))),
\end{align*}
and
\begin{align*}
          &\limsup_{\tau\to s^{+}}\frac{u(\tau,\xi(\tau))-u(s,\xi(s))}{\tau-s}=\limsup_{\tau\to s^{+}}\frac{u(s+(\tau-s),\xi(s)+\dot{\xi}(s)(\tau-s))-u(s,\xi(s))}{\tau-s}\\
\leqslant &\langle (q,p),(1,\dot{\xi}(s))\rangle =q+p\cdot\dot{\xi}(s).
\end{align*}
It follows that $q+H(s,\xi(s),p,u(s,\xi(s)))\geqslant0$. Combing this with (1), we obtain
\begin{align*}
q+H(s,\xi(s),p,u(s,\xi(s)))=0,\qquad \forall (q,p)\in D^{+}u(s,\xi(s)).
\end{align*}
Thus, by (3) we have $D^{+}u(s,\xi(s))=\mbox{\rm Ext}(D^{+}u(s,\xi(s)))$. Therefore, $D^{+}u(s,\xi(s))$ is a singleton and $u$ is differentiable at $(s,\xi(s))$.
\end{proof}

\medskip

\begin{proof}[Proof of Proposition \ref{pro:sensitive}]
For any $v\in\R^{n},|v|\leqslant1$, let $\xi_{v}(s)=\xi(s)+\frac{s}{t}v,s\in[0,t]$ and $\tilde{u}_{v}$ be the solution of
\begin{equation}\label{pf 3.7 1}
\begin{cases}
\dot{\tilde{u}}_{v}=L(s,\xi_{v}(s),\dot{\xi}_{v}(s),\tilde{u}_{v}(s))=L(s,\xi(s)+\frac{s}{t}v,\dot{\xi}(s)+\frac{v}{t},\tilde{u}_{v}(s)),\ s\in[0,t],\\
\tilde{u}_{v}(0)=u_{0}(\xi_{v}(0))=u_{0}(\xi(0)).
\end{cases}
\end{equation}
Differentiate \eqref{pf 3.7 1} with respect to $v$, we have
\begin{align*}
\begin{cases}
\frac{\partial}{\partial v}\dot{\tilde{u}}_{v}=L_{x}\cdot\frac{s}{t}+L_{v}\cdot\frac{1}{t}+L_{u}\cdot\frac{\partial}{\partial v}\tilde{u}_{v},\ s\in[0,t]\\
\frac{\partial}{\partial v}\tilde{u}_{v}(0)=0.
\end{cases}
\end{align*}
Solving this Cauchy problem at $v=0$, we obtain that
\begin{align*}
\frac{\partial}{\partial v}\Big\vert_{v=0}\tilde{u}_{v}(t)&=e^{\int_{0}^{t}L_{u}dr}\int_{0}^{t}e^{-\int_{0}^{s}L_{u}dr}(L_{x}\cdot\frac{s}{t}+L_{v}\cdot\frac{1}{t})ds\\
                                   &=e^{\int_{0}^{t}L_{u}dr}[e^{-\int_{0}^{s}L_{u}dr}L_{v}\cdot\frac{s}{t}\Big\vert_{s=0}^{s=t}
                                   +\int_{0}^{t}(e^{-\int_{0}^{s}L_{u}dr}L_{x}-\frac{d}{ds}e^{-\int_{0}^{s}L_{u}dr}L_{v})\frac{s}{t}ds]\\
                                   &=e^{\int_{0}^{t}L_{u}dr}\cdot e^{-\int_{0}^{t}L_{u}dr}\cdot L_{v}(t,\xi(t),\dot{\xi}(t),u_{\xi}(t))\cdot 1\\
                                   &=L_{v}(t,\xi(t),\dot{\xi}(t),u_{\xi}(t))=p(t).
\end{align*}
Therefore,
\begin{align*}
&\limsup_{v\to 0}\frac{u(t,x+v)-u(t,x)-p(t)\cdot v}{|v|}\leqslant\limsup_{v\to 0}\frac{\tilde{u}_{v}(t)-u(t,x)-p(t)\cdot v}{|v|}\\
&=\lim_{v\to 0}\frac{\tilde{u}_{v}(t)-\tilde{u}_{0}(t)-p(t)\cdot v}{|v|}=0,
\end{align*}
which implies $p(t)\in \nabla^{+}u(t,x)$. This completes the proof of the first relation.
Notice that the second relation is a direct consequence of the first one and Proposition \ref{pro:D^*} (4).
Now, we turn to the proof of the third relation. For any $v\in\R^{n},|v|\leqslant1$, let $\xi_{v}(s)=\xi(s)+\frac{t-s}{t}v,s\in[0,t]$ and $\tilde{u}_{v}$ be the solution of
\begin{equation}\label{pf 3.7 2}
\begin{cases}
\dot{\tilde{u}}_{v}=L(s,\xi_{v}(s),\dot{\xi}_{v}(s),\tilde{u}_{v}(s))=L(s,\xi(s)+\frac{t-s}{t}v,\dot{\xi}(s)-\frac{v}{t},\tilde{u}_{v}(s)),\ s\in[0,t],\\
\tilde{u}_{v}(0)=u_{0}(\xi_{v}(0))=u_{0}(\xi(0)+v).
\end{cases}
\end{equation}
Differentiating \eqref{pf 3.7 2} with respect to $v$, we have
\begin{align*}
\begin{cases}
\frac{\partial}{\partial v}\dot{\tilde{u}}_{v}=L_{x}\cdot\frac{t-s}{t}+L_{v}\cdot(-\frac{1}{t})+L_{u}\cdot\frac{\partial}{\partial v}\tilde{u}_{v},\ s\in[0,t]\\
\frac{\partial}{\partial v}\tilde{u}_{v}(0)=Du_{0}(\xi(0)+v).
\end{cases}
\end{align*}
Solving this Cauchy problem at $v=0$, we obtain that
\begin{align*}
\frac{\partial}{\partial v}\Big\vert_{v=0}\tilde{u}_{v}(t)&=e^{\int_{0}^{t}L_{u}dr}[Du_{0}(\xi(0))+\int_{0}^{t}e^{-\int_{0}^{s}L_{u}dr}(L_{x}\cdot\frac{t-s}{t}+L_{v}\cdot(-\frac{1}{t}))ds]\\
                                   &=e^{\int_{0}^{t}L_{u}dr}[Du_{0}(\xi(0))+e^{-\int_{0}^{s}L_{u}dr}L_{v}\cdot\frac{t-s}{t}\Big\vert_{s=0}^{s=t}
                                   +\int_{0}^{t}(e^{-\int_{0}^{s}L_{u}dr}L_{x}-\frac{d}{ds}e^{-\int_{0}^{s}L_{u}dr}L_{v})\frac{t-s}{t}ds]\\
                                   &=e^{\int_{0}^{t}L_{u}dr}[Du_{0}(\xi(0))-L_{v}(0,\xi(0),\dot{\xi}(0),u_{\xi}(0))]\\
                                   &=e^{\int_{0}^{t}L_{u}dr}[Du_{0}(\xi(0))-p(0)].
\end{align*}
On the other hand,
\begin{align*}
\tilde{u}_{v}(t)\geqslant u(t,x)=\tilde{u}_{0}(t),\ \forall v\in\R^{n},|v|\leqslant1.
\end{align*}
This implies $\frac{\partial}{\partial v}\Big\vert_{v=0}\tilde{u}_{v}(t)=0$. Thus, $e^{\int_{0}^{t}L_{u}dr}[Du_{0}(\xi(0))-p(0)]=0$, that is $p(0)=Du_{0}(\xi(0))$.
\end{proof}

\medskip

\begin{proof}[Proof of Proposition \ref{pro:contact_minimizer}]
Now we have
\begin{align*}
\dot{U}&=P\cdot H_{p}(s,X,P,U)-H(s,X,P,U)=L(s,X,H_{p}(s,X,P,U),U)\\
       &=L(s,X,\dot{X},U),\ \forall s\in[0,t],
\end{align*}
and $U(0)=u_{0}(X(0))$, that is, $U$ is the solution of the Cauchy problem \eqref{eq:cara1} with $\xi=X$. Thus, $U(t)=u(t,X(t))$ implies $X$ is a minimizer of $u(t,X(t))$.
\end{proof}

\medskip

\begin{proof}[Proof of Proposition \ref{pro:1-1}]
If $u$ is differentiable at $(t,x)$, then by Proposition \ref{pro:Herglotz_Lie} and Proposition \ref{pro:contact_minimizer}, the solution $X$ of \eqref{eq:Lie2} with terminal condition
\begin{align*}
\begin{cases}
X(t)=x,\\
P(t)=\nabla u(t,x),\\
U(t)=u(t,x),
\end{cases}
\end{align*}
is the unique minimizer if $u(t,x)$. Thus, our conclusion obviously holds.

Now consider the case of a general $(t,x)$. Fix any $(q,p)\in D^{*}u(t,x)$, and let $(X,P,U)$ be the solution of \eqref{eq:Lie2} with terminal condition
\begin{align*}
\begin{cases}
X(t)=x,\\
P(t)=p,\\
U(t)=u(t,x).
\end{cases}
\end{align*}
Then there exists a sequence of points $(t_{i},x_{i})$ where $u$ is differentiable which converges to $(t,x)$ such that $\lim_{i\to\infty}Du(t_{i},x_{i})=(q,p)$. Invoking the first step, we know that the solution $X_{i}$ of \eqref{eq:Lie2} with terminal condition
\begin{align*}
\begin{cases}
X_{i}(t_{i})=x_{i},\\
P_{i}(t_{i})=\nabla u(t_{i},x_{i}),\\
U_{i}(t_{i})=u(t_{i},x_{i}),
\end{cases}
\end{align*}
is the unique minimizer if $u(t,x)$. It follows that $U_{i}(0)=u_{0}(X_{i}(0))$ and $(X_{i},P_{i},U_{i})$ converges to $(X,P,U)$ in $C^{R+1}$ topology. Therefore, we have
\begin{align*}
U(0)=\lim_{i\to\infty}U_{i}(0)=\lim_{i\to\infty}u_{0}(X_{i}(0))=u_{0}(X(0)),
\end{align*}
which implies $X$ is a minimizer of $u(t,x)$ by Proposition \ref{pro:contact_minimizer}. This proves that the correspondence $(q,p)\to X$ is a map from $D^{*}u(t,x)$ to the set of minimizers for $u(t,x)$.
Next, we prove the injectivity of our map. If $(q_{i},p_{i})\in D^{*}u(t,x),i=1,2$ map to the same minimizer $X$ for $u(t,x)$, then by Proposition \ref{pro:sensitive} and Proposition \ref{pro:D^*} we have
\begin{align*}
p_{i}=P_{i}(t)=L_{v}(t,X(t),\dot{X}(t),u(t,x)):=p,\ i=1,2,
\end{align*}
and
\begin{align*}
q_{i}=-H(t,x,p,u(t,x)),\ i=1,2.
\end{align*}
Thus, $(q_{1},p_{1})=(q_{2},p_{2})$.
Finally, we prove the surjectivity. Let $X:[0,t]\to\R^{n}$ be a minimizer for $u(t,x)$ and $P(s)=L_{v}(s,X(s),\dot{X}(s),u(s,X(s))),\ s\in[0,t]$. By Proposition \ref{pro:D^*} and Proposition \ref{pro:sensitive}, we know that for any $s\in(0,t)$, $Du(s,X(s))$ exists, and we have
\begin{align*}
\nabla u(s,X(s))=P(s),\ u_{t}(s,X(s))=-H(s,X(s),P(s),u(s,X(s))).
\end{align*}
It follows that
\begin{align*}
\lim_{s\to t^{-}}\nabla u(s,X(s))=P(t),\ \lim_{s\to t^{-}}u_{t}(s,X(s))=-H(t,x,P(t),u(t,x)).
\end{align*}
Therefore, $(-H(t,x,P(t),u(t,x)),P(t))\in D^{*}u(t,x)$.
\end{proof}

\medskip

\begin{proof}[Proof of Proposition \ref{pro:approx}]
By Proposition \ref{pro:1-1}, there exists $(q_{k},p_{k})\in D^{*}u(t_{k},x_{k})$ such that $\xi_{k}=X_{k}$, where $(X_{k},P_{k},U_{k})$ is the solution of \eqref{eq:Lie2} with terminal condition
\begin{align*}
\begin{cases}
X_{k}(t_{k})=x_{k},\\
P_{k}(t_{k})=p_{k},\\
U_{k}(t_{k})=u(t_{k},x_{k}).
\end{cases}
\end{align*}
Since $u$ is locally Lipschitz, we can choose a sub-sequence $\{(q_{k_{i}},p_{k_{i}})\}$ of $\{(q_{k},p_{k})\}$ such that $(q_{k_{i}},p_{k_{i}})\to(q,p)\in D^{*}u(t,x)$ as $i\to\infty$. Therefore, $(X_{k_{i}},P_{k_{i}},U_{k_{i}})$ converges in the $C^{R+1}$ topology to $(X,P,U)$, which is the solution of \eqref{eq:Lie2} with terminal condition
\begin{align*}
\begin{cases}
X(t)=x,\\
P(t)=p,\\
U(t)=u(t,x).
\end{cases}
\end{align*}
Again by Proposition \ref{pro:1-1}, we know that $X$ is a minimizer for $u(t,x)$. This completes the proof of (1), and (2) is a direct consequence of (1).
\end{proof}

\bibliographystyle{plain}
\bibliography{mybib}

\end{document}